\newcommand{\R}{\mathbb R}
\newcommand{\N}{\mathbb N}
\newcommand{\dist}{\mathrm{dist}}
\newcommand{\cat}{\mathrm{cat}}
\newcommand{\Dim}{\mathrm{dim}}
\newcommand{\second}{\mathrm I\!\mathrm I}
\newcommand{\Fcal}{\mathcal F}
\newcommand{\Rcal}{\mathcal R}
\newcommand{\Dcal}{\mathcal D}
\newcommand{\Hcal}{\mathcal H}
\newcommand{\Ucal}{\mathcal U}
\newcommand{\Mcal}{\mathfrak M}
\newcommand{\Xcal}{\mathcal X}
\newcommand{\Ycal}{\mathcal Y}
\newcommand{\Ddt}{\tfrac{\mathrm D}{\mathrm dt}}
\newcommand{\Dds}{\tfrac{\mathrm D}{\mathrm ds}}
\numberwithin{equation}{section}
\title[Multiple Brake orbits in $m$-disks]%
{Multiple brake orbits in $\mathbf m$--dimensional disks}
\author[R. Giamb\`o]{Roberto Giamb\`o}
\author[F. Giannoni]{Fabio Giannoni}
\address{Dipartimento di Matematica e Informatica,\hfill\break\indent
Universit\`a di Camerino, Italy}
\email{roberto.giambo@unicam.it, fabio.giannoni@unicam.it}
\author[P. Piccione]{Paolo Piccione}
\address{Departamento de Matem\'atica \hfill\break\indent Universidade de S\~ao Paulo \hfill\break\indent
Rua do Mat\~ao, 1010 \hfill\break\indent 05508-090 S\~ao Paulo, SP, Brazil}
\email{piccione.p@gmail.com}
\urladdr{http://www.ime.usp.br/\~{}piccione}
\date{February 11th, 2015}
\subjclass[2000]{37C29, 37J45, 58E10}
\begin{document}


\theoremstyle{plain}\newtheorem{teo}{Theorem}[section]
\theoremstyle{plain}\newtheorem{prop}[teo]{Proposition}
\theoremstyle{plain}\newtheorem{lem}[teo]{Lemma}
\theoremstyle{plain}\newtheorem{cor}[teo]{Corollary}
\theoremstyle{definition}\newtheorem{defin}[teo]{Definition}
\theoremstyle{remark}\newtheorem{rem}[teo]{Remark}
\theoremstyle{definition}\newtheorem{example}[teo]{Example}

\theoremstyle{plain}\newtheorem*{teon}{Theorem}


\begin{abstract}
Let $(M,g)$ be a (complete) Riemannian surface, and let $\Omega\subset M$
be an open subset whose closure is homeomorphic to a disk. We
prove that if $\partial\Omega$ is smooth and it satisfies a
strong concavity assumption, then there are at least two distinct
orthogonal geodesics  in $\overline\Omega=\Omega \bigcup\partial\Omega$.
Using the results given in \cite{GGP1}, we then obtain a proof of the
existence  of two distinct {\em brake orbits\/} for a class of Hamiltonian systems.
In our proof we shall use recent deformation results proved in \cite{esistenza}.
\end{abstract}

\maketitle

\renewcommand{\contentsline}[4]{\csname nuova#1\endcsname{#2}{#3}{#4}}
\newcommand{\nuovasection}[3]{\medskip\hbox to \hsize{\vbox{\advance\hsize by -1cm\baselineskip=12pt\parfillskip=0pt\leftskip=3.5cm\noindent\hskip -2cm #1\leaders\hbox{.}\hfil\hfil\par}$\,$#2\hfil}}
\newcommand{\nuovasubsection}[3]{\medskip\hbox to \hsize{\vbox{\advance\hsize by -1cm\baselineskip=12pt\parfillskip=0pt\leftskip=4cm\noindent\hskip -2cm #1\leaders\hbox{.}\hfil\hfil\par}$\,$#2\hfil}}

\tableofcontents

\section{Introduction}
\label{sec:intro}
In this paper we will use a non-smooth version
of the Ljusternik--Schnirelman theory to prove the
existence of multiple orthogonal geodesic chords
in a Riemannian manifolds with boundary. This fact, together
with the results in \cite{GGP1}, gives a multiplicity
result for   brake orbits of a class of
Hamiltonian systems. Let us recall a few basic facts and
notations from \cite{GGP1}.

\begin{subsection}{Geodesics in Riemannian Manifolds with Boundary}
\label{sub:geodes} Let $(M,g)$ be a smooth (i.e., of class $C^2$)
Riemannian manifold with $\Dim(M)=m\ge2$, let $\dist$ denote the
distance function on $M$ induced by $g$; the symbol $\nabla$ will
denote the covariant derivative of the Levi-Civita connection of
$g$, as well as the gradient differential operator for smooth maps
on $M$. The Hessian $\mathrm H^f(q)$ of a smooth map $f:M\to\R$ at
a point $q\in M$ is the symmetric bilinear form $\mathrm
H^f(q)(v,w)=g\big((\nabla_v\nabla f)(q),w\big)$ for all $v,w\in
T_qM$; equivalently, $\mathrm H^f(q)(v,v)=\frac{\mathrm
d^2}{\mathrm ds^2}\big\vert_{s=0} f(\gamma(s))$, where
$\gamma:\left]-\varepsilon,\varepsilon\right[\to M$ is the unique
(affinely parameterized) geodesic in $M$ with $\gamma(0)=q$ and
$\dot\gamma(0)=v$. We will denote by $\Ddt$ the covariant
derivative along a curve, in such a way that $\Ddt\dot\gamma=0$ is the
equation of the geodesics. A basic reference on the background material
for Riemannian geometry is \cite{docarmo}.

Let $\Omega\subset M$ be an open subset;
$\overline\Omega=\Omega\bigcup\partial \Omega$ will denote its
closure. In this paper we will use a somewhat
strong concavity assumption for compact subsets of $M$, that we
will call "strong concavity" below, and which is stable by
$C^2$-small perturbations of the boundary.

If $\partial \Omega$ is a smooth embedded submanifold of $M$, let
$\second_{\mathfrak n}(x):T_x(\partial\Omega)\times
T_x(\partial\Omega)\to\R$ denote the {\em second fundamental form
of $\partial\Omega$ in the normal direction $\mathfrak n\in
T_x(\partial\Omega)^\perp$}. Recall that $\second_{\mathfrak
n}(x)$ is a symmetric bilinear form on $T_x(\partial\Omega)$
defined by:
\[\phantom{\qquad v,w\in T_x(\partial\Omega),}\second_{\mathfrak n}(x)(v,w)=g(\nabla_vW,
\mathfrak n),\qquad v,w\in T_x(\partial\Omega),\] where $W$ is any
local extension of $w$ to a smooth vector field along
$\partial\Omega$.

\begin{rem}\label{thm:remphisecond}
Assume that it is given a \emph{signed distance function} for $\partial\Omega$, i.e.,
a smooth function $\phi:M\to\R$ with the
property that $\Omega=\phi^{-1}\big(\left]-\infty,0\right[\big)$
and $\partial\Omega=\phi^{-1}(0)$, with $\mathrm d\phi\ne0$ on
$\partial\Omega$.\footnote{One can choose $\phi$ such that
$\vert\phi(q)\vert=\dist(q,\partial\Omega)$ for all $q$ in a
(closed) neighborhood of $\partial\Omega$.} The following equality
between the Hessian $\mathrm H^\phi$ and the second fundamental
form\footnote{%
Observe that, with our definition of $\phi$, then $\nabla\phi$ is
a normal vector to $\partial\Omega$ pointing {\em outwards\/} from
$\Omega$.} of $\partial\Omega$ holds:
\begin{equation}\label{eq:seches}
\phantom{\quad x\in\partial\Omega,\ v\in
T_x(\partial\Omega);}\mathrm H^\phi(x)(v,v)=
-\second_{\nabla\phi(x)}(x)(v,v),\quad x\in\partial\Omega,\ v\in
T_x(\partial\Omega);\end{equation} Namely, if
$x\in\partial\Omega$, $v\in T_x(\partial\Omega)$ and $V$ is a
local extension around $x$ of $v$ to a vector field which is
tangent to $\partial\Omega$, then $v\big(g(\nabla\phi,V)\big)=0$
on $\partial\Omega$, and thus:
\[\mathrm H^\phi(x)(v,v)=v\big(g(\nabla\phi,V)\big)-g(\nabla\phi,\nabla_vV)=-\second_{\nabla\phi(x)}(x)(v,v).\]

For convenience, we will fix throughout the paper a function $\phi$ as above. We observe that,
although the second fundamental form is defined intrinsically,
there is no canonical choice for the function $\phi$
describing the boundary of $\Omega$ as above.
\end{rem}

\begin{defin}\label{thm:defstrongconcavity}
We will say that that $\overline\Omega$
is {\em strongly concave\/} if $\second_{\mathfrak n}(x)$ is
negative definite for all $x\in\partial\Omega$ and all inward
pointing normal direction $\mathfrak n$.
\end{defin}

\noindent
Observe that if $\overline\Omega$ is strongly concave, geodesics starting
tangentially to $\partial\Omega$ remain \emph{inside} $\Omega$.
\smallskip

\begin{rem}\label{thm:newremopencondition}
Strong concavity is evidently a {\em $C^2$-open condition}. Then,
by \eqref{eq:seches}, if $\overline\Omega$ is compact, we deduce
the existence of $\delta_0>0$ such that $\mathrm H^\phi(x)(v,v)<0$
for all $x\in\phi^{-1}\big([-\delta_0,\delta_0]\big)$ and for all
$v\in T_xM$, $v\ne0$, such that $g\big(\nabla\phi(x),v\big)=0$.

A simple contradiction argument based on Taylor expansion shows that, under the above condition, it is $\nabla\phi(q)\ne 0$,
for all $q\in\phi^{-1}([-\delta_0,\delta_0])$.
\end{rem}
\begin{rem}\label{thm:remopencondition}
Let $\delta_0$ be as above.
The strong concavity condition gives us the following property of geodesics,
that will be used systematically throughout the paper:
\begin{equation}\label{eq:1.1bis}
\begin{matrix}
\text{for any  geodesic $\gamma:[a,b]\to\overline\Omega$ with $\phi(\gamma(a))=\phi(\gamma(b))=0$}\\
\text{and $\phi(\gamma(s))<0$ for all $s\in \left]a,b\right[$, there exists $\overline s\in \left]a,b\right[$ such that
$\phi\big(\gamma(\overline s)\big)<-\delta_0$.}
\end{matrix}
\end{equation}
Such property is proved easily by looking at the minimum point
of the map $s\mapsto\phi(\gamma(s))$.
\end{rem}

The main objects of our study are geodesics in $M$ having image in
$\overline\Omega$ and with endpoints orthogonal to
$\partial\Omega$, that will be called {\em orthogonal geodesic chords}:

\begin{defin}\label{thm:defOGC}
A geodesic $\gamma:[a,b]\to M$ is called a {\em geodesic chord\/}
in $\overline\Omega$ if
$\gamma\big(\left]a,b\right[\big)\subset\Omega$ and
$\gamma(a),\gamma(b)\in\partial\Omega$; by a {\em weak geodesic
chord\/} we will mean a geodesic $\gamma:[a,b]\to M$ with image in
$\overline\Omega$ and endpoints
$\gamma(a),\gamma(b)\in\partial\Omega$ and such that $\gamma(s_{0})\in \partial\Omega$ for some $s_{0} \in ]a,b[$. A (weak) geodesic chord is
called {\em orthogonal\/} if $\dot\gamma(a^+)\in
(T_{\gamma(a)}\partial\Omega)^\perp$ and $\dot\gamma(b^-)\in
(T_{\gamma(b)}\partial\Omega)^\perp$, where
$\dot\gamma(\,\cdot\,^\pm)$ denote the one-sided derivatives.

\end{defin}

For shortness, we will write \textbf{OGC} for ``orthogonal
geodesic chord'' and \textbf{WOGC} for ``weak orthogonal geodesic
chord''.

In the central result of this paper we will give a lower estimate
on the number of distinct orthogonal geodesic chords; we recall here
some results in this direction available in the literature.
In \cite{bos}, Bos proved that if $\partial\Omega$ is smooth,
$\overline\Omega$ convex and homeomorphic to the $m$-dimensional
disk, then there are at least $m$ distinct OGC's for $\overline\Omega$.
Such a result is a generalization
of a classical result by Ljusternik and Schnirelman (see \cite{LustSchn}),
where the same result was proven for convex subsets of $\R^m$
endowed with the Euclidean metric. Bos' result was used in \cite{gluckziller}
to prove a multiplicity result for brake orbits under a certain ``non-resonance condition''.
Counterexamples show that, if one drops the convexity assumption, the lower estimate for orthogonal
geodesic chords given in Bos' theorem does not hold.

Motivated by the study of a certain class of Hamiltonian systems (see Subsection \ref{sub:brakehom}),
in this paper we will study the case of sets with strongly concave boundary.
A natural conjecture is that, also in the concave case, one should have at least $m$ distinct orthogonal geodesic chords in an $m$-disk, but at this stage, this seems to be a quite hard result to prove.
Having this goal in mind, in this paper we give a positive answer to our conjecture in the special case when $m=2$.
Our central result is the following:
\begin{teo}\label{thm:main}
Let $\Omega$ be an open subset of $M$ with smooth boundary
$\partial\Omega$, such that $\overline\Omega$ is strongly concave
and homeomorphic to the $m$--dimensional disk.
Then, there are at least two geometrically distinct\footnote{%
By {\em geometrically distinct\/} curves we mean
curves having distinct images as subsets of $\overline\Omega$.}
orthogonal geodesic chords in $\overline\Omega$.
\end{teo}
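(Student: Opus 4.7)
The plan is to set up a non-smooth Ljusternik--Schnirelman theory for the energy functional
$E(\gamma)=\tfrac12\int_0^1 g(\dot\gamma,\dot\gamma)\,dt$
on the Hilbert manifold $\mathfrak N$ of $H^1$-curves $\gamma:[0,1]\to\overline\Omega$ with $\gamma(0),\gamma(1)\in\partial\Omega$. Critical points of $E$ whose interior lies in $\Omega$ are exactly the orthogonal geodesic chords we seek, so the task reduces to producing two geometrically distinct critical orbits. Because the pointwise constraint $\gamma\subset\overline\Omega$ makes this an obstacle problem, I would use the non-smooth deformation results from \cite{esistenza} in place of the classical Palais flow.

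A first OGC $\gamma_1$ is obtained as a minimum of $E$ over a non-trivial component of $\mathfrak N$ via the framework of \cite{GGP1}; let $c_1=E(\gamma_1)>0$. To produce a geometrically distinct second OGC, I would set up a one-parameter minimax exploiting the natural $\Z_2$-action $t\mapsto 1-t$ on $\mathfrak N$, which identifies each chord with its reverse. In the surface setting $\partial\Omega$ is a circle, and the quotient of $(\partial\Omega\times\partial\Omega)\setminus\Delta$ by this involution is homotopy equivalent to an open Möbius band, which has Ljusternik--Schnirelman category two. Defining
\[
c_2=\inf_{A\in\Gamma_2}\sup_{\gamma\in A} E(\gamma),
\]
where $\Gamma_2$ is a family of $\Z_2$-invariant subsets of the appropriate equivariant category, the deformation lemma of \cite{esistenza} delivers a critical point at the level $c_2\ge c_1$. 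If $c_2>c_1$, the resulting OGC is automatically distinct in image from $\gamma_1$; in the degenerate case $c_2=c_1$, the standard Ljusternik--Schnirelman argument still produces a continuum of critical configurations at the common level, again forcing a second geometrically distinct OGC.

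The main technical obstacle is to rule out \emph{weak} orthogonal geodesic chords as limits of Palais--Smale sequences: a minimax sequence may concentrate onto a geodesic that touches $\partial\Omega$ at interior parameters, and such a limit must be promoted to a genuine OGC. Here strong concavity is indispensable, through property~\eqref{eq:1.1bis}: any geodesic segment with both endpoints on $\partial\Omega$ and interior in $\overline\Omega$ must plunge to depth at least $\delta_0$. This uniform quantitative bound lets one truncate near-boundary excursions of such a limit and replace them with strictly shorter interior chords, thereby excluding WOGC's as effective critical configurations of the obstacle problem. Combining this with the equivariant deformation machinery of \cite{esistenza} then closes the argument.
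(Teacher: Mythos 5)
Your proposal has two genuine gaps, both at points where the concave setting breaks the classical machinery you are invoking. First, the functional framework: on the space of $H^1$-curves in $\overline\Omega$ with endpoints on $\partial\Omega$ there is no ``non-trivial component'' on which to minimize --- since $\overline\Omega$ is a disk this space is connected and the infimum of $E$ is $0$, attained by constant curves on $\partial\Omega$, so your first OGC $\gamma_1$ does not come from a minimum; the level $c_1$ must itself be a minimax relative to the constant curves, and one has to prove it is strictly positive (in the paper this is Lemma~\ref{lem:topological} together with Lemma~\ref{thm:lem10.3}). More seriously, the constrained ``obstacle problem'' with curves forced to stay in $\overline\Omega$ is exactly the naive setup that fails under strong concavity: the inward (constrained) shortening flow has stationary curves that hug the boundary or have cusps and are not geodesics. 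The paper's construction therefore works in a larger space $\mathfrak M$ of curves allowed to exit $\overline\Omega$, uses \emph{outward}-pushing deformations, defines $\mathcal F$ on admissible homotopies $(\mathcal D,h)$, and needs the whole classification of variationally critical portions (Lemma~\ref{thm:leminsez4.4}, Propositions~\ref{thm:regcritpt} and \ref{thm:irregcritpt}) plus the bending-constant/ingoing-flow machinery to steer the flow away from the non-geodesic critical configurations. Your remark about ``truncating near-boundary excursions'' via \eqref{eq:1.1bis} does not substitute for this; note also that WOGC's are excluded in the paper a priori, by the domain-perturbation result Proposition~\ref{thm:noWOGC}, not by an analysis of Palais--Smale limits.

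Second, the topological and degenerate-level part of your minimax. The $\Z_2$-equivariant scheme based on the quotient of $(\partial\Omega\times\partial\Omega)\setminus\Delta$ by $(A,B)\mapsto(B,A)$ (your Möbius band) is precisely the route the paper explains cannot be used: the non-smooth deformations available here do not respect that symmetry, so equivariant minimax classes are not preserved by the admissible homotopies; the paper instead uses a non-symmetric relative category of the pair $(\mathbb S^{m-1}\times\mathbb S^{m-1},\Delta^{m-1})$, bounded below by $2$ via a cuplength computation (Lemma~\ref{thm:estimatecat}), which moreover works for every $m\ge2$, while your Möbius-band argument is confined to $m=2$. Finally, in the degenerate case $c_1=c_2$ the claim that one gets ``a continuum of critical configurations'' is not available in this non-smooth framework, and even if it were it would not directly yield geometrically distinct images (reversals and reparameterizations of one chord are not geometrically distinct); in the paper geometric distinctness comes only from distinct critical values (Proposition~\ref{thm:distinct}), and the equality $c_1=c_2$ is excluded by assuming a single OGC, showing the neighborhood to be removed is contractible in $\mathfrak C$ (Proposition~\ref{thm:prop9.4}), and applying the Second Deformation Lemma (Proposition~\ref{thm:prop9.3}) to drop the relative category by one and reach a contradiction. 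Without an argument of this kind your proof does not close in the case $c_1=c_2$.
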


A similar multiplicity result was proved in \cite{arma}, assuming that $\overline\Omega$ is
homeomorphic to the $m$--dimensional annulus.\smallskip

\subsection{Reduction to the case without WOGC}
Although the general class of weak orthogonal geodesic
chords are perfectly acceptable solutions of our initial geometrical
problem, our suggested construction of a variational setup works well
only in a situation where one can
exclude {\em a priori\/} the existence in $\overline\Omega$ of
orthogonal geodesic chords $\gamma:[a,b]\to\overline\Omega$ for
which there exists $s_0\in\left]a,b\right[$ such that
$\gamma(s_0)\in\partial\Omega$.

One does not lose generality in assuming
that there are no such WOGC's in $\overline\Omega$ by recalling the
following result from \cite{GGP1}:

\begin{prop}
\label{thm:noWOGC}
Let $\Omega\subset M$ be an open set
whose boundary $\partial\Omega$ is smooth and compact and with $\overline\Omega$
strongly concave.
Assume that there are only a finite number of  (crossing) orthogonal
geodesic chords in $\overline\Omega$. Then, there exists an
open subset $\Omega'\subset\Omega$ with the following properties:
\begin{enumerate}
\item\label{itm:nowogcs1} $\overline{\Omega'}$ is diffeomorphic to $\overline\Omega$
and it has smooth boundary;
\item\label{itm:nowogcs2} $\overline{\Omega'}$ is strongly concave;
\item\label{itm:nowogcs3} the number of (crossing) OGC's in $\overline{\Omega'}$  is less than
or equal to the number of (crossing) OGC's in $\overline\Omega$ ;
\item\label{itm:nowogcs4} there are no (crossing) WOGC's in $\overline{\Omega'}$.
\end{enumerate}
\end{prop}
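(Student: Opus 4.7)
The plan is to take $\Omega'=\Omega_\varepsilon:=\phi^{-1}\bigl((-\infty,-\varepsilon)\bigr)$ for a carefully chosen small $\varepsilon>0$, so that the boundary is pushed uniformly inward in the normal direction. The motivation is that existing WOGCs in $\overline\Omega$ touch $\partial\Omega$ at interior points and hence are not contained in $\overline{\Omega_\varepsilon}$; the hope is that for an appropriate $\varepsilon$ no new WOGCs nor extra OGCs appear. Properties (1) and (2) of the proposition follow from the bounds in Remark \ref{thm:newremopencondition}: for $\varepsilon\in(0,\delta_0)$ we have $\nabla\phi\ne 0$ on $\phi^{-1}(-\varepsilon)$, so $\partial\Omega_\varepsilon$ is a smooth embedded hypersurface; the flow of $-\nabla\phi/g(\nabla\phi,\nabla\phi)$ produces a diffeomorphism $\overline\Omega\to\overline{\Omega_\varepsilon}$; and applying \eqref{eq:seches} to the signed distance function $\phi_\varepsilon:=\phi+\varepsilon$ for $\partial\Omega_\varepsilon$, combined with the Hessian bound of Remark \ref{thm:newremopencondition}, yields strong concavity of $\overline{\Omega_\varepsilon}$.

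For property (4), any WOGC $\sigma$ in $\overline{\Omega_\varepsilon}$ must be tangent to $\partial\Omega_\varepsilon$ at each interior touching time $s_0'$: otherwise $\phi_\varepsilon\circ\sigma$ would have nonzero derivative at $s_0'$, forcing $\phi_\varepsilon(\sigma(s))>0$ on one side of $s_0'$, in contradiction with $\sigma\subset\overline{\Omega_\varepsilon}$. Applying the Hessian calculation of Remark \ref{thm:remphisecond} to $\phi_\varepsilon$ then shows that $s_0'$ is a strict local maximum of $s\mapsto\phi_\varepsilon(\sigma(s))$, hence isolated. For property (3), I would use a convergence and implicit-function argument: any sequence of OGCs in $\overline{\Omega_{\varepsilon_n}}$ with $\varepsilon_n\to 0$ admits, after extraction, a limit that is either an OGC or a WOGC in $\overline\Omega$ (orthogonality is inherited from the $C^2$-convergence $\partial\Omega_{\varepsilon_n}\to\partial\Omega$); since the OGCs in $\overline\Omega$ are finite, each contributes at most one OGC to $\overline{\Omega_\varepsilon}$ for small $\varepsilon$, via an implicit function theorem argument on the endpoint-orthogonality conditions.

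The principal obstacle is ensuring that (3) and (4) hold simultaneously for a single $\varepsilon$, and in particular that OGCs of $\overline{\Omega_\varepsilon}$ do not ``bifurcate'' from WOGCs of $\overline\Omega$ as $\varepsilon$ decreases. I would address this by a Sard-type argument: the tangency condition defining WOGCs in $\overline{\Omega_\varepsilon}$ is of codimension one, so together with the strict-maximum/isolation structure noted above, the set of ``bad'' values of $\varepsilon$ is discrete. A parallel analysis near each WOGC of $\overline\Omega$ bounds the number of OGCs that can emerge into $\overline{\Omega_\varepsilon}$ as $\varepsilon\to 0^+$. Choosing $\varepsilon$ small and outside the countable exceptional set then gives the desired $\Omega'$. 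The delicate point is that WOGCs in $\overline\Omega$ are not assumed finite, so the exceptional-set argument must be made uniform in a compactness framework, which is supplied by the strong concavity hypothesis together with the compactness of $\overline\Omega$.
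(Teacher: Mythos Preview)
The paper does not give a self-contained proof of this proposition; it simply cites \cite[Proposition~2.6]{GGP1}. So there is nothing in the present text to compare your argument against directly, and your outline should be judged on its own merits (or against the cited reference).

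Your overall strategy---take $\Omega'=\phi^{-1}\bigl((-\infty,-\varepsilon)\bigr)$ for a suitably small, well-chosen $\varepsilon$---is the natural one and is almost certainly what \cite{GGP1} does. Properties \eqref{itm:nowogcs1} and \eqref{itm:nowogcs2} follow exactly as you indicate. However, your argument for \eqref{itm:nowogcs4} as written has a genuine gap: showing that each interior touching point of a putative WOGC $\sigma$ in $\overline{\Omega_\varepsilon}$ is an isolated strict local maximum of $\phi_\varepsilon\circ\sigma$ does \emph{not} rule out WOGCs. A WOGC needs only one such (isolated) tangency to exist; your isolation argument at best shows that the set of interior contact times is finite, not empty. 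You seem to recognise this in your last paragraph, where you fall back on a Sard/transversality mechanism, but that step is only gestured at. To make it rigorous you must exhibit a smooth finite-dimensional map whose regular values correspond to $\varepsilon$'s admitting no WOGC, and this is not straightforward because the candidate geodesics themselves depend on $\varepsilon$ through the endpoint orthogonality conditions.

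The implicit-function argument for \eqref{itm:nowogcs3} has a parallel gap: a sequence of OGCs in $\overline{\Omega_{\varepsilon_n}}$ with $\varepsilon_n\to0$ may converge not to one of the finitely many OGCs of $\overline\Omega$ but to a WOGC of $\overline\Omega$, and WOGCs are not assumed finite. You flag this (``the delicate point is that WOGCs in $\overline\Omega$ are not assumed finite''), but do not resolve it. The compactness coming from strong concavity and the bound \eqref{eq:1.1bis} does control lengths and give subsequential limits, but turning that into a clean count requires a genuine transversality computation near each limiting configuration, not just a reference to Sard. In short: the architecture of your proof is right and presumably matches \cite{GGP1}, but the two load-bearing steps---absence of WOGCs for generic $\varepsilon$, and no bifurcation of new OGCs from WOGCs---are asserted rather than proved.
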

\begin{proof}
See \cite[Proposition~2.6]{GGP1}
\end{proof}

\begin{rem}\label{thm:remmain}
In view of the result of Proposition~\ref{thm:noWOGC}, it suffices to prove Theorem~\ref{thm:main} under the further
assumption that:
\begin{equation}\label{eq:ipotesi}
\text{ there are no WOGC's in }\overline\Omega.
\end{equation}
For this reason, we will henceforth assume \eqref{eq:ipotesi}.
\end{rem}

\subsection{On the curve shortening method in concave manifolds}
Multiplicity of OGC's in the case of compact manifolds having convex
boundary is typically proven by applying a curve-shortening argument.
From an abstract viewpoint, the curve-shortening process can be seen
as the construction of a flow in the space of paths, along whose trajectories
the length or energy functional is decreasing.

In this paper we will follow the same procedure, with the difference
that both the space of paths and the shortening flow have
to be defined appropriately.

Shortening a curve having image in a closed convex subset $\overline\Omega$
of a Riemannian manifold produces another curve in $\overline\Omega$; in
this sense, we think of the shortening flow as being ``inward pushing''
in the convex case.
As opposite to the convex
case, the shortening flow in the concave case will be ``outwards pushing'', and
this fact requires the one should consider only those portions of a curve
that remain inside $\overline\Omega$ when it is stretched outwards.
This type of analysis has been carried out in \cite{esistenza}, and we shall employ here many of the
results proved in \cite{esistenza}.

The concavity condition plays a central role in the variational setup of our
construction. ``Variational criticality'' relatively to the energy
functional will be defined in terms of ``outwards pushing'' infinitesimal
deformations of the path space (see Definition~\ref{thm:defvariatcrit}).
The class of variationally critical portions contains properly the set of portions
consisting of crossing OGC's; such curves will
be defined as ``geometrically critical'' paths (see Definition~\ref{thm:defgeomcrit}).
In order to construct the shortening flow, an accurate analysis of all
possible variationally critical paths is required (Section~\ref{sub:description}),
and the concavity condition will guarantee that such paths are
\emph{well behaved} (see Lemma~\ref{thm:leminsez4.4}, Proposition~\ref{thm:regcritpt}
and Proposition~\ref{thm:irregcritpt}).

Once that a reasonable classification of variationally critical points
is obtained, the shortening flow is constructed
by techniques which are typical of  pseudo-gradient vector field approach.
The crucial property of the shortening procedure is that its flow lines move away from critical
portions which are not OGC's, in the same way that the integral line of a pseudo-gradient vector field
move away from points that are not critical.
A  technical description of the abstract
\emph{minimax} framework that we will use is given in Subsection~\ref{sub:discussion}.
\end{subsection}

\begin{subsection}{Brake and Homoclinic Orbits of Hamiltonian Systems}
\label{sub:brakehom}
The result of Theorem~\ref{thm:main} can be applied to prove a multiplicity
result for brake orbits and homoclinic orbits, as follows.

Let $p=(p_i)$, $q=(q^i)$ be coordinates on $\R^{2m}$, and
let us consider a {\em natural\/} Hamiltonian function
$H\in C^2\big(\R^{2m},\R\big)$, i.e., a function of the form
\begin{equation}\label{eq:hamfun}
H(p,q)=\frac 12 \sum_{i,j=1}^m a^{ij}(q)p_ip_j+V(q),
\end{equation}
where $V\in C^2\big(\R^{m},\R\big)$ and $A(q)=\big(a^{ij}(q)\big)$ is
a positive definite quadratic form on $\R^m$:
\[\sum_{i,j=1}^m a^{ij}(q)p_ip_j\ge\nu(q)\vert q\vert^2\]
for some continuous function $\nu:\R^m\to\R^+$ and for all $(p,q)\in\R^{2m}$.

The corresponding Hamiltonian system is:
\begin{equation}\label{eq:HS}
\left\{
\begin{aligned}
&\dot p=-\frac{\partial H}{\partial q}\\
&\dot q=\frac{\partial H}{\partial p},
\end{aligned}
\right.
\end{equation}
where the dot denotes differentiation with respect to time.

For all $q\in\R^m$, denote by $\mathcal L(q):\R^m\to\R^m$ the linear
isomorphism whose matrix with respect to the canonical basis is
$\big(a_{ij}(q)\big)$, which is the inverse of $\big(a^{ij}(q)\big)$;
it is easily seen that, if $(p,q)$ is
a solution of class $C^1$ of \eqref{eq:HS}, then
$q$ is actually  a map of class $C^2$
and
\begin{equation}\label{eq:pintermsofq}
p=\mathcal L(q)\dot q.
\end{equation} With a
slight abuse of language, we will say that a
$C^2$-curve $q:I\to
\R^m$ ($I$ interval in $\R$) is a solution of \eqref{eq:HS} if $(p,q)$ is a solution
of \eqref{eq:HS} where $p$ is given by \eqref{eq:pintermsofq}.
Since the system \eqref{eq:HS} is autonomous, i.e., time independent,
then the function $H$ is constant along each solution, and it represents
the total energy of the solution of the dynamical system. There
exists a large amount of literature concerning the study of periodic
solutions of autonomous Hamiltonian systems having energy $H$ prescribed
(see for instance \cite{LiuLong,LZ,Long,rab} and the references therein).

\subsection{The Seifert conjecture in dimension $2$}
We will be concerned with
a special kind of periodic solutions of \eqref{eq:HS}, called {\em brake orbits}.
A brake orbit for the system
\eqref{eq:HS} is a non-constant periodic solution $\R\ni t\mapsto\big(p(t),q(t)\big)
\in\R^{2m}$
of class $C^2$ with the property that $p(0)=p(T)=0$ for
some $T>0$. Since $H$ is even in the variable $p$, a brake orbit
$(p,q)$ is $2T$-periodic, with $p$ odd and $q$ even about $t=0$ and about $t=T$.
Clearly, if $E$ is the energy of a brake orbit $(p,q)$, then
$V\big(q(0)\big)=V\big(q(T)\big)=E$.

The link between solutions of brake orbits and orthogonal geodesic chords
is obtained in \cite[Theorem~5.9]{GGP1}.
Using this theorem and Theorem~\ref{thm:main}, we get immediately the following:
\begin{teo}\label{thm:1.7}
Let $H\in C^2\big(\R^{2m},\R\big)$ be a natural Hamiltonian function
as in \eqref{eq:hamfun}, $E\in\R$ and \[\Omega_E=V^{-1}\big(\left]-\infty,E\right[\big).\]
Assume that $\mathrm dV(x)\ne0$ for all $x\in\partial\Omega_E$ and that
$\overline{\Omega}_E$ is homeomorphic to a $m$-disk.
Then, the Hamiltonian system \eqref{eq:HS} has at least two geometrically
distinct brake orbits  having energy $E$.
\end{teo}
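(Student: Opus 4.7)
The plan is to deduce Theorem~\ref{thm:1.7} as a direct corollary of Theorem~\ref{thm:main} combined with the correspondence between brake orbits and orthogonal geodesic chords established in \cite[Theorem~5.9]{GGP1}. That correspondence is a Maupertuis--Jacobi reduction: at a fixed energy level $E$, the brake orbits of \eqref{eq:HS} are, after a time reparametrization, in one-to-one correspondence with the OGCs in $\overline\Omega_E$ relative to the Jacobi metric
\[
g_E(q)(v,w)=\bigl(E-V(q)\bigr)\,\langle\mathcal L(q)v,w\rangle,
\]
which is a genuine Riemannian metric on $\Omega_E$ and degenerates conformally along $\partial\Omega_E=V^{-1}(E)$.

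First I would check that the hypotheses of Theorem~\ref{thm:main} are met for $(\overline\Omega_E,g_E)$. The assumption $\mathrm dV\ne0$ on $\partial\Omega_E$ guarantees that $\partial\Omega_E$ is a smooth regular hypersurface, and the topological hypothesis that $\overline\Omega_E$ is homeomorphic to an $m$-disk is imported directly. The essential geometric point is strong concavity of $\partial\Omega_E$ in the Jacobi metric: a standard computation (built into \cite[Theorem~5.9]{GGP1}) shows that the conformal factor $(E-V)$ vanishing to first order on $\partial\Omega_E$ forces the second fundamental form of the boundary, in any inward normal direction with respect to $g_E$, to be negative definite. Equivalently, geodesics of $g_E$ starting tangentially to $\partial\Omega_E$ are pushed into $\Omega_E$, which is exactly Definition~\ref{thm:defstrongconcavity}.

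With these hypotheses in place, Theorem~\ref{thm:main} produces at least two geometrically distinct OGCs in $\overline\Omega_E$ with respect to $g_E$. Applying the converse direction of \cite[Theorem~5.9]{GGP1}, each such OGC lifts, via \eqref{eq:pintermsofq} followed by the Maupertuis time change, to a brake orbit of \eqref{eq:HS} at energy $E$. Since a brake orbit $(p(t),q(t))$ is determined up to time translation and reversal by the trajectory $q$ of the configuration, two OGCs with distinct images in $\overline\Omega_E$ yield two brake orbits that are geometrically distinct in $\R^{2m}$, which proves the theorem.

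The only conceivable obstacle is bookkeeping: one must make sure that the notion of ``geometrically distinct'' used in Theorem~\ref{thm:main} (distinct images in $\overline\Omega$) transfers to ``geometrically distinct'' for brake orbits, and that degenerate cases such as constant solutions at critical points of $V$ on $\partial\Omega_E$ are excluded. Both issues are handled by the hypothesis $\mathrm dV\ne 0$ on $\partial\Omega_E$ and by the explicit form of the correspondence in \cite[Theorem~5.9]{GGP1}, so no further work is needed beyond citing that result.
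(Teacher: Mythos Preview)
Your proposal is correct and matches the paper's approach exactly: the paper states Theorem~\ref{thm:1.7} as an immediate consequence of Theorem~\ref{thm:main} together with \cite[Theorem~5.9]{GGP1}, and your write-up is a faithful elaboration of precisely that deduction. One small point of phrasing: since $g_E$ degenerates on $\partial\Omega_E$, Theorem~\ref{thm:main} is not applied literally to $(\overline\Omega_E,g_E)$ but rather (as the paper summarizes in Section~\ref{sec:outline}) to a slightly shrunk domain obtained via the Jacobi distance-to-boundary function, where the metric is nondegenerate and the boundary is strongly concave---this desingularization step is part of the content of \cite[Theorem~5.9]{GGP1}, to which you correctly defer.
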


Multiplicity results for brake orbits in even, convex case are obtained e.g. in \cite{LZ,LZZ,Z1,Z2,Z3}.
\medskip

In \cite{seifert}, it was conjectured by Seifert the existence of at least $m$ brake orbits and it is well known that such lower estimate for the number of brake orbits
cannot be improved.
Indeed, consider the Hamiltonian:
\[
H(q,p)=\tfrac12|p|^2+\sum_{i=1}^m\lambda_i^2 q_i^2,\qquad (q,p)\in\R^{2m},
\]
where $\lambda_i\not=0$ for all $i$. If $E>0$ and the squared ratios
$\left({\lambda_i}/{\lambda_j}\right)^2$ are irrational for all $i\ne j$,
then the only periodic solutions of \eqref{eq:HS} with energy
$E$ are the $m$ brake orbits moving along the axes of the ellipsoid with equation
\[
\sum_{i=1}^m\lambda_i^2q_i^2=E.
\]
The result in \cite{LZ} is a proof of the Seifert conjecture (in any dimension) under the assumption that the potential is convex and even. Theorem~\ref{thm:1.7} gives a proof of the Seifert conjecture in dimension $m=2$, without any assumption on the potential.
\end{subsection}

\section{Main ideas of the proof}\label{main}
In this section we will give an outline of the paper, describing the functional framework and the main ideas of the proofs.

\subsection{Presentation of the proof of Theorems~\ref{thm:main} and \ref{thm:1.7}}\label{sec:outline}
The proof of our multiplicity result will be carried out in the
following way. Set $W = \{x \in \R^2: V(x) < E\}$.

\begin{itemize}
\item
Using the well known Maupertuis--Jacobi variational principle,
see e.g. \cite[Proposition 4.1]{GGP1}, brake orbits for the
given Hamiltonian system are characterized, up to a
reparameterization, as geodesics with endpoints on
$\partial W$ relatively to a certain Riemannian metric, the
so--called Jacobi metric on $W$, singular on $\partial W$ given by
$g_*(v,v)=\big(E-V(x)\big)g_0(v,v)$,
where $g_0(v,v) = \frac 12\sum_{i,j=1}^m
a_{ij}(x)\,v_i v_j$;\smallskip

\item by means of the Jacobi metric and the
induced "distance from the boundary" function, one gets rid of
the metric singularity on the boundary, and the problem is
reduced to the search of geometrically distinct geodesics,
orthogonal to the boundary of a Riemannian manifold which is
homeomorphic to the $m$--dimensional, whose boundary satisfies a strong
\emph{concavity} condition, cf \cite{GGP1};\smallskip

\item a minimax argument will be applied to a suitable class of homotopies and to a particular nonsmooth
functional (for the classical minimax theory cf e.g. \cite{MW, struwe}).
\end{itemize}

\subsection{Abstract Ljusternik--Schnirelman theory}\label{sub:discussion}

For the minimax theory  we shall use the following topological invariant. Consider a topological space $X$ and $Y \subset X$.
We shall use a suitable version of the
relative category in $\Xcal\text{\ mod\ }\Ycal$ (see \cite{FH,FW})
as topological invariant, which is defined as follows.

Let $\Dcal\subset\Xcal$ be a
closed subset, and assume that there exists $k>0$ and  $A_0,A_1,\ldots, A_k$ open subsets of $\Xcal$ such that:
\begin{itemize}
\item[(a)] $\Dcal\subset\bigcup_{i=0}^k A_i$;
\item[(b)] for any $i=1,\ldots,k$ there
exists a homotopy $h_i$ sending $A_i$ to a single point moving
in $\Xcal$, while the homotopy $h_0$
sends $A_0$ inside $\Ycal$ moving $A_0 \cap\Ycal$ in $\Ycal$.
 \end{itemize}
The minimal integer $k$ with the above
properties is the relative category of $\Dcal$ in  $\Xcal\text{\ mod\ }\Ycal$
and it will be denoted by $\cat_{\Xcal,\Ycal}(\Dcal)$. We shall use it with 
$\Xcal = \mathbb{S}^{m-1}\times \mathbb{S}^{m-1}$ and $\Ycal=\{(A,B)\in \mathbb{S}^{m-1} \,:\,A=B\}\equiv \Delta^{m-1}$, where
$\mathbb{S}^{m-1}$ is the $(m-1)$--dimensional unit sphere.

In \cite{esistenza} a different relative category is considered. There, the maps  $h_i$ were assumed to send the $A_i$'s to a single point moving outside the set $\Delta^{m-1}$; moreover, it was used a quotient of the product $\mathbb S^{m-1}\times\mathbb S^{m-1}$ obtained by identifying the pairs $(A,B)$ and $(B,A)$. Its numerical value is $m$, but unfortunately this notion of relative category is not compatible with the definition of the functional $\mathcal F$ used in the minimax argument, and no multiplicity result can be obtained. In order to have a relative category which fits with the properties of the functional $\mathcal F$, one must relax the assumptions on the maps $h_i$, and require that they take values in all the space $\mathbb{S}^{m-1}\times \mathbb{S}^{m-1}$. This gives a lower numerical value for such new notion of relative category, which is less than or equal to $2$, as we can see by the same topological arguments used in \cite{G}. This suggests that it is more convenient to use a  relative category without the symmetry given by the identification of the pairs $(A,B)$ and $(B,A)$. With this definition, we have:

\begin{lem}\label{thm:estimatecat}
For any $m \geq2$, $\cat_{\Xcal,\Ycal}(\Xcal)\geq 2$.
\end{lem}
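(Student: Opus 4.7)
The plan is to establish the bound via the standard relative cup--length estimate: if there exist $u\in H^{m-1}(\Xcal,\Ycal;\Z/2)$ and $v\in H^{m-1}(\Xcal;\Z/2)$ with $u\cup v\ne 0$ in $H^{2(m-1)}(\Xcal,\Ycal;\Z/2)$, then no open cover $\Xcal=A_0\cup A_1$ meeting the definition of $\cat_{\Xcal,\Ycal}(\Xcal)\le 1$ can exist. So the first task is to produce such $u$ and $v$.

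For this, let $\alpha,\beta\in H^{m-1}(\Xcal;\Z/2)$ be the pullbacks of the top class of $\mathbb S^{m-1}$ under the two projections $\pi_1,\pi_2:\Xcal\to\mathbb S^{m-1}$; the K\"unneth formula gives $\alpha\cup\beta\ne 0$ in $H^{2(m-1)}(\Xcal;\Z/2)$. Both projections restrict to the identity along the diagonal $\Ycal$, so $\alpha$ and $\beta$ have the same image in $H^{m-1}(\Ycal;\Z/2)$; hence $\alpha+\beta$ lies in the kernel of the restriction $H^{m-1}(\Xcal)\to H^{m-1}(\Ycal)$. The long exact sequence of the pair $(\Xcal,\Ycal)$ then yields $\widetilde\omega\in H^{m-1}(\Xcal,\Ycal;\Z/2)$ mapping to $\alpha+\beta$, and because $\alpha^2=\beta^2=0$ one has $(\alpha+\beta)\cup\alpha=\alpha\cup\beta\ne 0$. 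Thus $u:=\widetilde\omega$ and $v:=\alpha$ do the job.

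The contradiction is now a standard double lift. Assume the cover $\Xcal=A_0\cup A_1$ exists. Contractibility of $A_1$ in $\Xcal$ forces $\alpha|_{A_1}=0$, so $\alpha$ lifts to $\widetilde\alpha\in H^{m-1}(\Xcal,A_1;\Z/2)$. On the other side, the homotopy $h_0$ is a homotopy of pair maps $(A_0,A_0\cap\Ycal)\to(\Xcal,\Ycal)$ between the inclusion and a map that lands in $(\Ycal,\Ycal)$, hence $\widetilde\omega$ restricts to zero in $H^{m-1}(A_0,A_0\cap\Ycal;\Z/2)$, and a standard excision argument (valid because $\Ycal$ is a smooth submanifold with a tubular neighborhood in $\Xcal$) yields a lift $\widetilde{\widetilde\omega}\in H^{m-1}(\Xcal,A_0\cup\Ycal;\Z/2)$ of $\widetilde\omega$. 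The cup product $\widetilde{\widetilde\omega}\cup\widetilde\alpha$ then lies in $H^{2(m-1)}(\Xcal,A_0\cup A_1\cup\Ycal;\Z/2)=H^{2(m-1)}(\Xcal,\Xcal;\Z/2)=0$, while its image in $H^{2(m-1)}(\Xcal,\Ycal;\Z/2)$ equals $u\cup v\ne 0$: contradiction.

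I expect the main obstacle to be the excision step used to lift $\widetilde\omega$ over $A_0\cup\Ycal$, since the notion of relative category in use is slightly nonstandard---only $h_0$ is constrained to move $A_0\cap\Ycal$ inside $\Ycal$, and the contracting homotopies $h_i$ are otherwise free to take values anywhere in $\Xcal$. That verification reduces to applying excision to a tubular neighborhood of $\Ycal$ in $\Xcal$, after which the rest of the argument is purely formal.
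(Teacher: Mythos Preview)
Your argument is correct and follows the same route as the paper: both establish the relative cuplength of $(\mathbb S^{m-1}\times\mathbb S^{m-1},\Delta^{m-1})$ is at least $1$ via the class $\pi_1^*(\omega)-\pi_2^*(\omega)$ (which is your $\alpha+\beta$ in $\mathbb Z/2$ coefficients) cupped with $\pi_1^*(\omega)$, and then invoke the standard bound $\cat\ge\mathrm{cuplength}+1$. The only differences are cosmetic: the paper works with integer coefficients and simply cites Fadell--Husseini and Fournier--Willem for the cuplength-to-category inequality, whereas you use $\mathbb Z/2$ coefficients and spell out the double-lift contradiction (including the excision/tubular-neighborhood step) by hand.
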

The proof of Lemma~\ref{thm:estimatecat} uses the notion of cuplength in cohomology, and it will be given in Appendix~\ref{sec:appendixrelLScat}.
Note that, in fact, the Lemma~\ref{thm:estimatecat} implies the equality $\cat_{\Xcal,\Ycal}(\Xcal)=2$, as it easy to show that, in any dimension, $\cat_{\Xcal,\Ycal}(\Xcal) \leq 2$.

\medskip

The problem of  finding orthogonal geodesic chords in a domain
$\overline\Omega$ of a Riemannian manifold $M$ with non-convex
boundary $\partial\Omega$ cannot be cast in a standard smooth
variational context, due mainly to the fact that the classical
shortening flow on the set of curves in $\overline\Omega$ with
endpoints on the boundary
produces stationary curves that are not "classical geodesics". In
order to overcome this problem, our strategy will be to reproduce
the ``ingredients'' of the classical smooth theory in a suitable non-smooth context. More precisely, we will define the following
objects:
\begin{itemize}
\item a metric space $\Mcal$, that consists of curves of class $H^{1}$
having image in an open neighborhood of $\overline\Omega$ in
$\Mcal$, and whose endpoints remain outside $\Omega$;
\item a compact  subset $\mathfrak C$ of $\Mcal$ which is
homeomorphic to the set of chords in the unit disk $\mathbb D^m$ with both
endpoints in $\mathbb{S}^{m-1}$ (and therefore homeomorphic to
$\mathcal X = \mathbb{S}^{m-1}\times\ \mathbb{S}^{m-1}$));
\item the class of the closed $\Rcal$--invariant subsets $\mathcal
D$ of $\mathfrak C$;
\item a family ${\mathcal H}$ consisting of pairs $(\mathcal D,h)$, where
$\mathcal D$ is a closed subset of $\mathfrak C$ and
$h:[0,1]\times\mathcal D\to\Mcal$ is a homotopy whose properties will be described in section
\ref{sec:homotopies};
\item a functional $\mathcal F :{\mathcal H}\rightarrow \R^{+}$, constructed starting from the classical
energy functional used for the geodesic problem.
\end{itemize}

We will define suitable notions of critical values for the
functional $\mathcal F$, in such a way that distinct critical values
determine geometrically distinct orthogonal geodesic chords in $\overline\Omega$.

Denote by $\star$ the operation of \emph{concatenation
of homotopies}, see \eqref{eq:defconcatenationhomotop}.
We shall say that a real number $c$ is a \emph{topological regular value} of $\mathcal F$ if there exists
$\bar\varepsilon>0$ such that for all $(\mathcal D,h)\in\mathcal H$
satisfying $\mathcal F(\mathcal D,h)) \leq c + \bar\varepsilon$ there exists
a homotopy $\eta$ such that $(\mathcal D,\eta \star h)\in\mathcal H$,
satisfying
\[
\mathcal F(\mathcal D,\eta \star h)\le c-\bar\varepsilon.
\]

A \emph{topological critical value} of $\mathcal F$ is a real number which is not a regular value.
\smallskip

Once this set up has been established, the proof of multiplicity
of critical points of $\mathcal F$ is carried out along the lines
of the standard relative Ljusternik--Schnirelman theory, as follows. Denote by $\mathfrak C_0$ the set of constant curves in
$\mathfrak C$ (which is homeomorphic to $Y$).
For $i=1,2$, set:
\begin{equation}\label{eq:defGammai}
\Gamma_i=\big\{\mathcal D\in\mathfrak C:\; \mathcal D
\,\text{is closed\ },\cat_{{\mathfrak C},{\mathfrak C_0}}
({\mathcal D})\ge i\big\},
\end{equation}
and define
\begin{equation}\label{eq:defci}
c_i=\inf_{\substack{\mathcal D\in\Gamma_i\\  (\mathcal D,h)\in\mathcal H}}
\mathcal F(\mathcal D,h).
\end{equation}
As observed in Remark \ref{rem:6.6}, $\mathcal H$ is not empty since $(\mathfrak C,\mathrm{I}_{\mathfrak C})$
belongs to the class $\mathcal H$,
where we denote by $\mathrm{I}_{\mathfrak C}:[0,1]\times\mathfrak C\to\mathfrak C$  the map $\mathrm{I}_{\mathfrak C}(\tau,x)=x$
for all $\tau$ and all $x$.
Moreover, by Lemma \ref{thm:estimatecat}, $\mathfrak C \in \Gamma_i$ for any $i=1,2$, and from this  we deduce that any $c_i$ is a finite real number.

By the very definition, one sees immediately that each $c_i$ is a topological critical value of
$\mathcal F$; moreover, since $\Gamma_{1}\subset\Gamma_2$, we have $c_1 \leq c_2$.

The crucial point of the construction is the proof of some ``deformation lemmas''
for the sublevels of $\mathcal F$ using the homotopies in $\mathcal H_{1}$,
in order to obtain that the $c_i$'s are energy values of geometrically distinct orthogonal geodesic chords parameterized in $[0,1]$.

The first deformation lemma
tells us that the topological critical values of $\mathcal F$ correspond to orthogonal geodesic chords,
in the sense that if $c$ is a topological critical value for $\mathcal F$
then it is a \emph{geometrical critical value} (cf. Definition~\ref{thm:defgeomcrit}): there exists an orthogonal geodesic chord
$\gamma$ (parameterized in the interval $[0,1]$)
such that $\frac12\int_0^1 g(\dot \gamma,\dot \gamma)ds = c$.
Indeed if $c > 0$ is not a geometrical critical value, there exists $\epsilon > 0$
such that for any $(\mathcal D,h) \in \mathcal H$ satisfying
$\mathcal F(\mathcal D,h) \leq c+\epsilon$, there exists a homotopy $\eta$
such that $(\mathcal D, \eta \star h) \in \mathcal H$ and $\mathcal F(\mathcal D,\eta\star h) \leq c-\epsilon$ (cf. \ref{thm:firstdeflemma}).

The second deformation lemma (cf. \ref{thm:prop9.3})
says that a similar deformation
exists also for geometrical critical values, provided
that a suitable contractible neighborhood is removed.

More precisely, in our case, given a geometrical critical value $c > 0$, assuming there is only a finite number of
orthogonal geodesic chord in $\overline\Omega$ having energy $c$, we will prove
the existence of $\bar\varepsilon>0$ such that, for all $(\mathcal D,h)\in\mathcal H$ with
$\mathcal F(\mathcal D,h)\le c+\bar\varepsilon$ there exists an open subset $\mathcal A\subset\mathfrak C$
and a homotopy $\eta$ such that:
\begin{itemize}
\item[(i)] $(\mathcal D\setminus\mathcal A,\eta \star h)\in\mathcal H$;
\item[(ii)] $\mathcal F\big(\mathcal D\setminus\mathcal A,\eta \star h\big)\le c-\bar\varepsilon$;
\item[(iii)] $A$ is contractible in ${\mathfrak C}$
(hence, $\cat_{\Xcal,\Ycal}(\Dcal\setminus{\mathcal A})\ge
\cat_{\Xcal,\Ycal}(\Dcal)-1$).
\end{itemize}
 
Moreover, we also see that low sublevels of the functional $\mathcal F$ consist of curves that can be deformed on $\partial \Omega$, obtaining that $c_i>0$ for any $i=1,2$,
while by the two Fundamental Deformations Lemmas above we have:
\begin{itemize}
\item[(a)] $c_i$ is a geometrical critical value;
\item[(b)] $c_1<c_2$, assuming the existence of only a finite number of orthogonal geodesic
chords in $\overline\Omega$.
\end{itemize}
Note that if  $c=c_1=c_2$ we can get a contradiction in the following way. Choose $\bar\varepsilon>0$ as
in the second deformation Lemma, and take $(\mathcal D,h)\in\mathcal H$ such that $\mathcal D\in\Gamma_2$
and $\mathcal F(\mathcal D,h)\le c_2+\bar\varepsilon$. Let $\mathcal A\subset\mathfrak C$ and
$\eta$ be as above.
Then, $\mathcal F(\mathcal D\setminus\mathcal A,\eta \star h)\le c_1-\bar\varepsilon$, which is absurd, because
$\mathcal D\setminus\mathcal A\in\Gamma_1$ and $(D\setminus\mathcal A,\eta \star h) \in \mathcal H$.

The argument proves the existence of at least 2 distinct geometrical
critical values; the crucial point is that distinct geometrical
critical values produce geometrically distinct orthogonal geodesic
chords (cf. Proposition \ref{thm:distinct}). Then, using the results in \cite{GGP1}, we obtain the existence of at least two geometrically distinct brake orbits.

\section{The functional framework}
\label{sub:varframe}

Throughout the paper, $(M,g)$ will denote a Riemannian manifold
of class $C^2$ having dimension $m$;
all our constructions will be made in suitable (relatively) compact
subsets of $M$, and for this reason it will not be restrictive
to assume, as we will, that $(M,g)$ is complete.
Furthermore, we will work mainly in open subsets $\Omega$ of $M$ whose closure
is  homeomorphic to a $m$--dimensional disk, and
in order to simplify the exposition we will assume that, indeed,
$\overline\Omega$ is embedded topologically in $\R^m$, which will
allow to use an auxiliary linear structure in a  neighborhood
of $\overline\Omega$.
We will also assume that $\overline\Omega$ is strongly concave in $M$.

The symbol $H^1\big([a,b],\R^m\big)$ will denote the Sobolev
space of all absolutely continuous curves in $\R^m$
whose weak derivative is square integrable. Similarly,
$H^1\big([a,b],\R^m\big)$ will denote the infinite dimensional
Hilbert manifold consisting of all absolutely continuous
curves $x:[a,b]\to M$ such that $\varphi\circ x\vert_{[c,d]}\in H^1\big([c,d],\R^m)$
for all chart $\varphi:U\subset M\to \R^m$ of $M$ such that $x\big([c,d]\big)
\subset U$. By $H^1_0\big(\left]a,b\right[,\R^m\big)$
we will denote the subset of $H^1\big([a,b],\R^m\big)$ with $x(a)=x(b)=0$.
For $A \subset \R^m$ and $a < b$ we set
\begin{equation}\label{eq:defH1abA}
H^1\big([a,b],A\big)=\big\{x\in H^1\big([a,b],\R^m\big):x(s)\in A\ \text{for
all $s\in[a,b]$}\big\}.
\end{equation}
The Hilbert space norm $\Vert\cdot\Vert_{a,b}$ of $H^1\big([a,b],\R^m\big)$
(equivalent to the usual one)
will be defined by:
\begin{equation}\label{eq:norma-ab}
\Vert x\Vert_{a,b}=\left(\frac{\Vert x(a)\Vert_E^2+\int_a^b\Vert\dot x(s)\Vert^2_E\,\text ds}2\right)^{1/2},
\end{equation}
where $\Vert\cdot\Vert_E$ is the Euclidean norm in $\R^m$. Note that by \eqref{eq:norma-ab}
\begin{equation}\label{eq:stima1.8}
\Vert x\Vert_{L^\infty([a,b],\R^m)}\le\Vert x\Vert_{a,b},
\end{equation}
and this simplifies some estimates in the proofs of the deformation Lemmas (cf. \cite{esistenza}).
We shall use also the space $H^{2,\infty}$
which consists of differentiable curves with absolutely continuous derivative
and having bounded weak second derivative.

\begin{rem}\label{rem:rep}
In the development of our results, we will consider
curves $x$ with variable domain $[a,b]\subset[0,1]$.
In this situation, by $H^1$-convergence of a sequence $x_n:[a_n,b_n]\to M$
to a curve $x:[a,b]\to M$ we will mean that $a_n$ tends to $a$, $b_n$ tends
to $b$ and  $\widehat x_n:[a,b]\to M$
is $H^1$-convergent to $x$ in $H^1\big([a,b],M\big)$ as $n\to\infty$, where
$\widehat x_n$ is the unique affine reparameterization of $x$ on the interval
$[a,b]$. One defines similarly the notion of $H^1$--weak convergence and of uniform convergence
for sequences of curves with variable domain.
\end{rem}

It will be useful also to consider
the  flows $\eta^{+}(\tau,x)$ and $\eta^{-}(\tau,x)$ on the Riemannian manifold $M$ defined by
\begin{equation}\label{eq:flusso+}
\left\{\begin{array}{l}\dfrac{\mathrm d{\eta^+}}{\mathrm d\tau}(\tau)=
\dfrac{\nabla \phi(\eta^+)}{\Vert \nabla \phi(\eta^+) \Vert^{2}} \\[.5cm]
{\eta}^{+}(0)=x \in\big \{y \in M: -\delta_0 \leq \phi(y) \leq \delta_0\big\}, \end{array} \right.
\end{equation}

and

\begin{equation}\label{eq:flusso-}
\left\{\begin{array}{l}\dfrac{\mathrm d{\eta}^{-}}{\mathrm d\tau}(\tau)=
\dfrac{-\nabla \phi(\eta^-)}{\Vert \nabla \phi(\eta^-) \Vert^{2}} \\[.5cm]
{\eta}^{-}(0)=x \in\big \{y \in M: -\delta_0 \leq \phi(y) \leq \delta_0\big\}, \end{array} \right.
\end{equation}
where $\Vert \cdot \Vert$ is the norm induced by $g$.

\begin{rem}\label{rem:flusso-well-defined}
Note that $\eta^{+}(\tau,x)$ and $\eta^{-}(\tau,x)$ are well defined, because $\nabla \phi \not=0$ on the strip $\phi^{-1}\big([-\delta_0,\delta_0]\big)$. Moreover, using $\eta^+$ and $\eta^-$ we can show that the exists a homeomorphism between $\phi^{-1}\big([-\delta_0,\delta_0]\big)$ and $\big\{y \in \mathbb{R}^m: 1-\delta_0 \leq \Vert y \Vert_E \leq 1+\delta_0\big\}$. Therefore it must be $\delta_0 < 1$, since $\overline\Omega$ is homeomorphic to the unit $m$--dimensional disk.
\end{rem}
\bigskip

Now, fix a convex $C^2$--real map $\chi$ defined in $[0,1+\delta_0,1]$ such that
$\chi(s)=s-1$ for any $s \in [1-\delta_0,1+\delta_0]$, $\chi'(s) > 0$ for any $s \in [0,1+\delta_0]$, $\chi''(0)=0$ and consider the map:
\begin{equation}\label{eq:phisuldisco}
\phi_{\mathbb{D}^m}(z) = \chi(\Vert z \Vert_E),
\end{equation}
where ${\mathbb{D}^m}$ denotes the m--dimensional disk.
Note that $\phi_{\mathbb{D}^m}$ satisfies the properties of the map $\phi$ described in Remark \ref{thm:remphisecond} for the set $\overline \Omega = {\mathbb{D}^m}$ and the Riemann structure given by the Euclidean metric.\smallskip

We have the following
\begin{lem}\label{thm:riconduco-sfera}
There exists a homeomorphism $\Psi: \phi^{-1}(]-\infty,\delta_0]) \rightarrow \phi_{\mathbb{D}^m}^{-1}(]-\infty,\delta_0])$, which is
of class $C^1$ on $\phi^{-1}([-\delta_0,\delta_0])$,
such that
\begin{equation}\label{eq:corrispondenza}
-\phi(y)= 1-\Vert \Psi(y) \Vert_E \quad\forall\;y \in \phi^{-1}([-\delta_0,\delta_0]).
\end{equation}
\end{lem}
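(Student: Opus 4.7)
The plan is to trivialize the boundary strip $\phi^{-1}([-\delta_0,\delta_0])$ using the flow $\eta^+$, transfer it onto the model annulus $\{z:1-\delta_0\le\Vert z\Vert_E\le 1+\delta_0\}$ by a fixed $C^1$-diffeomorphism $\partial\Omega\to\mathbb{S}^{m-1}$, and then extend across the interior $\phi^{-1}(]-\infty,-\delta_0])$ by an Alexander-type construction.

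First I would establish the flow trivialization. Since $\nabla\phi$ does not vanish on $\phi^{-1}([-\delta_0,\delta_0])$ (Remark \ref{thm:newremopencondition}), the flow $\eta^+$ in \eqref{eq:flusso+} is $C^1$ there, and differentiating along a flow line yields $\tfrac{\mathrm d}{\mathrm d\tau}\phi(\eta^+(\tau,x_0))=g\bigl(\nabla\phi,\tfrac{\nabla\phi}{\Vert\nabla\phi\Vert^2}\bigr)\equiv 1$, so $\phi(\eta^+(\tau,x_0))=\phi(x_0)+\tau$. Hence the map $E(x_0,\tau):=\eta^+(\tau,x_0)$ is a $C^1$-diffeomorphism of $\partial\Omega\times[-\delta_0,\delta_0]$ onto $\phi^{-1}([-\delta_0,\delta_0])$, with $C^1$-inverse $y\mapsto(\pi(y),\phi(y))$, where $\pi(y):=\eta^+(-\phi(y),y)\in\partial\Omega$. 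The analogous radial map $(x,\tau)\mapsto(1+\tau)x$ is a $C^1$-diffeomorphism of $\mathbb{S}^{m-1}\times[-\delta_0,\delta_0]$ onto $\phi_{\mathbb{D}^m}^{-1}([-\delta_0,\delta_0])$ (using $\chi(s)=s-1$ on $[1-\delta_0,1+\delta_0]$).

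Next, choose a $C^1$-diffeomorphism $\psi_0:\partial\Omega\to\mathbb{S}^{m-1}$; in the case $m=2$ of Theorem \ref{thm:main}, $\partial\Omega$ is a smooth Jordan curve and arc-length parametrization does the job. On the strip define
\[
\Psi(y):=(1+\phi(y))\,\psi_0(\pi(y)).
\]
This is a composition of $C^1$ maps; since $\psi_0(\pi(y))$ has unit Euclidean norm, $\Vert\Psi(y)\Vert_E=1+\phi(y)$, which is precisely \eqref{eq:corrispondenza}. By construction $\Psi$ is a $C^1$-diffeomorphism of $\phi^{-1}([-\delta_0,\delta_0])$ onto $\phi_{\mathbb{D}^m}^{-1}([-\delta_0,\delta_0])$, and its restriction to the inner level set $\phi^{-1}(-\delta_0)$ is a homeomorphism onto $\{z:\Vert z\Vert_E=1-\delta_0\}$.

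Finally, set $D:=\phi^{-1}(]-\infty,-\delta_0])$ and $D':=\{z:\Vert z\Vert_E\le 1-\delta_0\}$. Using the flow $E$ to detach the collar $\phi^{-1}([-\delta_0,0])$ from $\overline\Omega\cong\mathbb{D}^m$ shows that $D$ is itself homeomorphic to $\mathbb{D}^m$, and clearly so is $D'$. The Alexander trick (cone the boundary identification from a basepoint after pulling back to $\mathbb{D}^m$) extends the already-defined homeomorphism $\Psi|_{\phi^{-1}(-\delta_0)}:\partial D\to\partial D'$ to a homeomorphism $D\to D'$, which we take as the definition of $\Psi$ on $D$. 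Gluing the two pieces along $\phi^{-1}(-\delta_0)$ produces a continuous bijection between the compact Hausdorff spaces $\phi^{-1}(]-\infty,\delta_0])$ and $\phi_{\mathbb{D}^m}^{-1}(]-\infty,\delta_0])$, hence a homeomorphism, $C^1$ on the strip and satisfying \eqref{eq:corrispondenza} there. The main obstacle is really the dimension-$m$ subtlety in selecting the $C^1$-diffeomorphism $\psi_0$: for $m=2$ it is trivial (which suffices for Theorem \ref{thm:main}); the interior Alexander-trick extension is classical, since only continuity is required across $\phi^{-1}(-\delta_0)$.
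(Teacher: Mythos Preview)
Your proof is correct and follows essentially the same route as the paper: trivialize the collar $\phi^{-1}([-\delta_0,\delta_0])$ via the gradient flow of $\phi$, transfer it to the model annulus by a boundary identification $\partial\Omega\to\mathbb S^{m-1}$, and patch in the interior with an Alexander-type cone extension. The only notable difference is that the paper starts from an arbitrary homeomorphism $\psi:\overline\Omega\to\mathbb D^m$ and uses $\psi^{-1}$ directly for the inner extension, whereas you first argue $\phi^{-1}(\left]-\infty,-\delta_0\right])\cong\mathbb D^m$ and then cone; you are also more explicit than the paper about the need for the boundary identification to be $C^1$ (which the paper tacitly assumes when it calls $\psi_0$ a diffeomorphism), and your remark that this is automatic for $m=2$ is well placed.
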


\begin{proof}
Consider any  homeomorphism $\psi : \overline\Omega \rightarrow \mathbb{D}^m$  and the flow $\eta^-$ given in \eqref{eq:flusso-}. Note that for all $y \in \phi^{-1}([-\delta_0,0])$ there exists a unique $y_0 \in \partial \Omega$ and $\tau \in [-\delta_0,0]$ such that
\[
y = \eta^{-}(\tau,y_0).
\]
For any $y \in \phi^{-1}([-\delta_0,0])$ we set
\[
\psi_0^-(y)=(1+\tau)\psi(y_0),
\]
obtaining a diffeomorphism between
$\phi^{-1}([-\delta_0,0])$ and $\phi_{\mathbb{D}^m}^{-1}([-\delta_0,0])$.
Similarly, using the flow  $\eta^+$ starting from $\partial\Omega$, we can define $\psi_0^+$ on $\phi^{-1}([0,\delta_0])$ such that $\psi_0^+(y)=\psi_0^-(y)$ on $\partial \Omega$,
obtaining a diffeomorphism $\psi_0$ between $\phi^{-1}([-\delta_0,\delta_0])$ and $\phi_{\mathbb{D}^m}^{-1}([-\delta_0,\delta_0])$. Now, we just have to extend
$\psi_0$ as homeomorphism to all $\phi^{-1}(]-\infty,\delta_0])$.

Towards this goal, set
\[
P_0 = \psi_0^{-1},
\]
which is well defined on $\phi_{\mathbb{D}^m}^{-1}([-\delta_0,\delta_0])$,
\[
\widehat{P_0} = P_{0}\big|_{\phi_{\mathbb{D}^m}^{-1}(-\delta_0)},
\]
and
\[
Q = \psi\vert_{\phi^{-1}(-\delta_0)} \circ \widehat{P_0}
\]
which is an homeomorphism on $\phi_{\mathbb{D}^m}^{-1}(-\delta_0)$.

Now extend $Q$ to all $\big\{z \in \mathbb{R}^m: \Vert z \Vert_E \leq 1-\delta_0\big\}$ by setting:
\[
\tilde Q(z) =
\begin{cases}
\dfrac{\Vert z \Vert_E}{1-\delta_0}Q\big(\frac{1-\delta_0}{\Vert z \Vert_E}z\big)& \text{ if } z \neq 0\\
0& \text{ if } z = 0.
\end{cases}
\]
Finally, the desired homeomorphism $\Psi$ is obtained by setting:
\[
\Psi^{-1}(z)=
\begin{cases}
\psi_0^{-1}(z) &\text{ if } z \in \phi_{\mathbb{D}^m}^{-1}([-\delta_0,\delta_0])\\
\psi^{-1}(\tilde Q(z)) &\text{ if }z \in \phi_{\mathbb{D}^m}^{-1}(]-\infty,-\delta_0]).
\end{cases}
\]
\qedhere
\end{proof}

Throughout the paper we shall use also the following constant:
\begin{equation}\label{eq:1.9fabio}
K_0=\max_{x\in\phi^{-1}(]-\infty,\delta_0]}\Vert\nabla\phi(x)\Vert.
\end{equation}

\subsection{Path space and maximal intervals}
\label{sec:pathspace}
In this subsection we will describe the set of curves $\mathfrak M$, which will be the ambient space of our minimax framework,
and the set $\mathfrak C \subset \mathfrak M$ homeomorphic
to  $\mathbb{S}^{1} \times \mathbb{S}^{1}$, that encodes all the topological information about $\mathfrak M$.
\smallskip

Let $\delta_0>0$ be as in Remark \ref{thm:remopencondition}.
Consider first the following set of paths
\begin{equation}\label{eq:defM}
\mathfrak M_0=\Big\{x\in
H^1\big([0,1],\phi^{-1}(\left]-\infty,\delta_0\right[)\big):
\phi(x(0))\ge 0,\,\phi(x(1))\ge 0\Big\},
\end{equation}
see Figure \ref{fig:3bis}.
\begin{figure}
\begin{center}
\psfull \epsfig{file=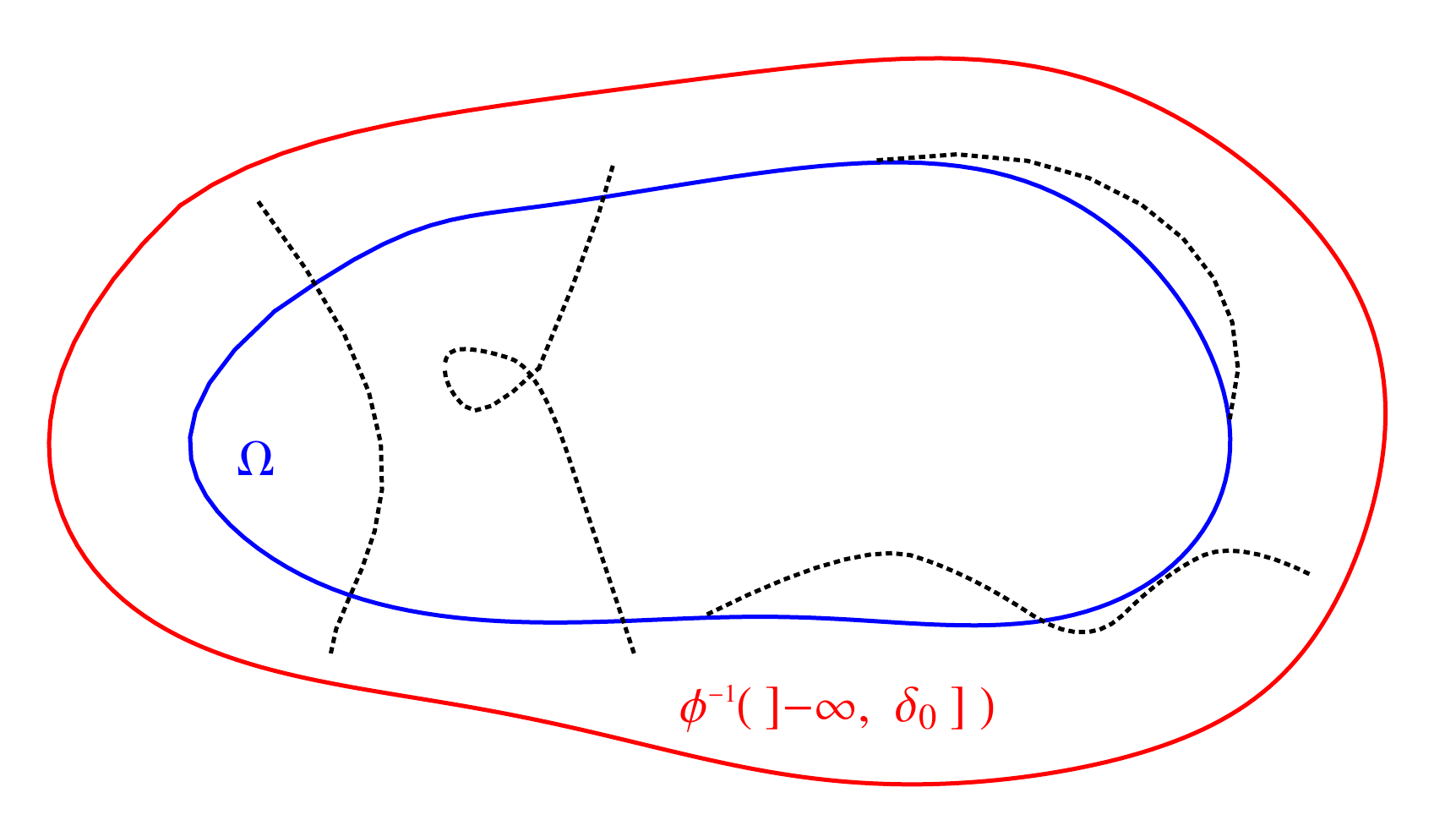, height=5cm} \caption{
Curves (the dotted lines) representing typical elements of the path space $\mathfrak
M_0$.}\label{fig:3bis}
\end{center}
\end{figure}
 
This is a subset of the Hilbert space $H^1\big([0,1],\R^m\big)$, and
it will be topologized with the induced metric.

The following result will be used systematically throughout the
paper:
\begin{lem}\label{thm:lemmacazzatina}
If $x\in\mathfrak M_0$ and $[a,b]\subset[0,1]$ is such that
$x(a)\in\partial\Omega$ and there exists $\bar s\in[a,b]$ such that
$\phi(x(\bar s))\le-\delta<0$, then:
\begin{equation}\label{eq:2.9fabio}
b-a\ge\frac{\delta^2}{K_0^2}\left(\int_a^bg(\dot x,\dot x)\,\mathrm
d\sigma\right)^{-1} ,
\end{equation}
and
\begin{equation}\label{eq:aggiuntalemma2.1}
\sup\{|\phi(x(s)|:s \in [a,b]\} \leq \sqrt2K_0\left(\frac{b-a}2 \int_a^bg(\dot x,\dot x) \,
\mathrm d\sigma\right)^{\frac12}
\end{equation}
where $K_0$ is defined in \eqref{eq:1.9fabio}.
\end{lem}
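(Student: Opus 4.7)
The plan is to reduce both inequalities to a single elementary Cauchy–Schwarz estimate applied to the scalar function $\phi\circ x:[0,1]\to\R$. Since $x\in\mathfrak M_0$, its image lies in $\phi^{-1}(\left]-\infty,\delta_0\right[)$, which is contained in the closed set $\phi^{-1}(\left]-\infty,\delta_0\right])$ on which the Riemannian gradient satisfies $\|\nabla\phi\|\le K_{0}$ by the definition \eqref{eq:1.9fabio}. Moreover, $\phi\circ x$ is absolutely continuous with
\[
\tfrac{\mathrm d}{\mathrm d\sigma}\phi(x(\sigma))=g\!\left(\nabla\phi(x(\sigma)),\dot x(\sigma)\right)\qquad\text{for a.e. }\sigma\in[0,1].
\]

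Starting from the boundary condition $\phi(x(a))=0$, the fundamental theorem of calculus gives, for any $s\in[a,b]$,
\[
\phi(x(s))=\int_{a}^{s}g\!\left(\nabla\phi(x(\sigma)),\dot x(\sigma)\right)\mathrm d\sigma .
\]
I then apply the pointwise Cauchy–Schwarz inequality in the metric $g$, followed by the integral Cauchy–Schwarz, to obtain
\[
|\phi(x(s))|\le K_{0}\int_{a}^{s}\!\sqrt{g(\dot x,\dot x)}\,\mathrm d\sigma\le K_{0}\sqrt{s-a}\left(\int_{a}^{s}g(\dot x,\dot x)\,\mathrm d\sigma\right)^{1/2}\le K_{0}\sqrt{b-a}\left(\int_{a}^{b}g(\dot x,\dot x)\,\mathrm d\sigma\right)^{1/2}.
\]

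Evaluating this chain of inequalities at $s=\bar s$, where by hypothesis $|\phi(x(\bar s))|\ge\delta$, yields $\delta^{2}\le K_{0}^{2}(b-a)\int_{a}^{b}g(\dot x,\dot x)\,\mathrm d\sigma$, which is precisely \eqref{eq:2.9fabio} after rearrangement. The estimate \eqref{eq:aggiuntalemma2.1} follows by taking the supremum over $s\in[a,b]$ of the same bound and noting that $K_{0}\sqrt{b-a}\bigl(\int_{a}^{b}g(\dot x,\dot x)\,\mathrm d\sigma\bigr)^{1/2}=\sqrt{2}K_{0}\bigl(\tfrac{b-a}{2}\int_{a}^{b}g(\dot x,\dot x)\,\mathrm d\sigma\bigr)^{1/2}$.

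There is essentially no serious obstacle: the only point requiring a brief verification is that the $L^{\infty}$–bound $\|\nabla\phi\|\le K_{0}$ may legitimately be invoked along the entire image of $x$, which is immediate from the definition of $\mathfrak M_{0}$ in \eqref{eq:defM} together with \eqref{eq:1.9fabio}. Absolute continuity of $\phi\circ x$ is standard since $\phi\in C^{2}$ and $x\in H^{1}$.
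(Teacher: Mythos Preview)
Your proof is correct and follows essentially the same route as the paper's: both use $\phi(x(a))=0$, the fundamental theorem of calculus, the pointwise bound $\|\nabla\phi\|\le K_0$, and Cauchy--Schwarz to obtain the uniform estimate $|\phi(x(s))|\le K_0\sqrt{b-a}\bigl(\int_a^b g(\dot x,\dot x)\,\mathrm d\sigma\bigr)^{1/2}$, from which both \eqref{eq:aggiuntalemma2.1} and \eqref{eq:2.9fabio} follow.
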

\begin{proof}
Since $\phi(x(a))=0$ we have, for any $s\in [a,b]$:
\begin{multline*}
\vert \phi(x(s))\vert = \vert\phi(x(s))-\phi(x(a))\vert\le\int_a^{s}\vert g\big(\nabla\phi(x(\sigma)),\dot x(\sigma)\big)\vert\,
\mathrm d\sigma \le \\ \int_a^b \vert
g\big(\nabla\phi(x(\sigma)),\dot x(\sigma)\big)\vert\, \mathrm d\sigma
 \le K_0\int_a^bg(\dot x,\dot x)^{\frac12}  \mathrm
d\sigma\\
\le K_0\sqrt{b-a} \left(  \int_a^bg(\dot x,\dot x) \,\mathrm
d\sigma\right)^{\frac12},
\end{multline*}
from which \eqref{eq:aggiuntalemma2.1} follows. Moreover, the same estimate shows that, if there exists $\bar s\in[a,b]$ such that
$\phi(x(\bar s))\le-\delta<0$, then \eqref{eq:2.9fabio} holds.
\end{proof}

For all $x\in\mathfrak M_0$, let $\mathcal I_{x}^{0}$ and  $\mathcal I_x$
denote the following collections of closed subintervals of $[0,1]$:
\[
\mathcal I_{x}^{0} = \big\{[a,b]\subset [0,1]: x([a,b]) \in
\overline\Omega, x(a),x(b) \in \partial\Omega \big\},
\]
\[
\mathcal I_x=\big\{[a,b]\in \mathcal I_{x}^{0}
\text{ and $[a,b]$ is maximal with respect to this
property}\big\}.
\]

\bigskip

\begin{rem}\label{simple-sc}
It is immediate to verify the following semicontinuity property.
Suppose $x_n \rightarrow x$ in $\mathfrak M$, $[a,b] \in {\mathcal I}_x$ and $[a_n,b_n] \in {\mathcal I}_{x_n}$ with $[a_n,b_n] \cap [a,b] \neq \emptyset$
for all $n$. Then
\[a \leq \liminf_{n \rightarrow \infty}a_n \leq \limsup_{n \rightarrow \infty}b_n \leq b.\]
\end{rem}

\begin{rem}\label{rem:noROGC}
Note that if $\gamma: [0,1]\rightarrow \overline\Omega$ is an OGC, then $\gamma\not\equiv \gamma$. Indeed if by contradiction $\gamma(1-t)=\gamma(t)$ for any $t$, from which we deduce $\dot \gamma(\frac12)=0$ and by the conservation law of the energy we should have that $\gamma$ is constant.
\end{rem}

The following Lemma allows to describe the subset $\mathfrak C$ of ${\mathfrak M}_{0}$
which carries on  all the topological properties of ${\mathfrak M}_{0}$.

\begin{lem}\label{thm:corde} There exists there a continuous map $G:\partial\Omega\times\partial\Omega\to H^1([0,1],\overline\Omega)$ such that
\begin{enumerate}
\item\label{corde1} $G(A,B)(0)=A,\,\,G(A,B)(1)=B$.
\item\label{corde2} $A\not=B\,\Rightarrow\, G(A,B)(s)\in\Omega\,\forall s\in ]0,1[$.
\item\label{corde3} $G(A,A)(s)=A\,\forall s\in [0,1]$.
\item\label{corde5} Suppose that there exists $s_0 \in [0,1] : \phi(G(A,B)(s_0)) > -\delta_0$. Then the set $\{s \in [0,1]: \phi(G((A,B)(s))) \in [-\delta_0,0]\}$ consists of two intervals where \\ $\phi(G(A,B)(\cdot))$ is strictly monotone.
\end{enumerate}
\end{lem}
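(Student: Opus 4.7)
The plan is to reduce the construction to Euclidean straight segments on the unit disk via the homeomorphism $\Psi \colon \phi^{-1}(\left]-\infty,\delta_0\right]) \to \phi_{\mathbb{D}^m}^{-1}(\left]-\infty,\delta_0\right])$ from Lemma~\ref{thm:riconduco-sfera}, namely to set
\[
G(A,B)(s) = \Psi^{-1}\bigl((1-s)\Psi(A) + s\Psi(B)\bigr), \qquad A,B\in\partial\Omega,\ s\in[0,1].
\]
Since $\Psi$ restricts to a homeomorphism $\partial\Omega \to \mathbb{S}^{m-1}$, properties (1) and (3) are immediate, and property (2) follows from the fact that for $A\neq B$ the open segment lies in $\{\|z\|_E < 1\}$, which is mapped to $\Omega$ by $\Psi^{-1}$ thanks to the identity $-\phi(y)=1-\|\Psi(y)\|_E$.

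For property (4), set $r(s):=\|(1-s)\Psi(A)+s\Psi(B)\|_E$; a direct computation gives
\[
r(s)^2 = 1 - 2(1-\cos\theta)\, s(1-s),
\]
where $\theta$ is the angle between $\Psi(A)$ and $\Psi(B)$. When $A\neq B$ we have $\cos\theta<1$, so $r$ strictly decreases on $[0,1/2]$ from $1$ to $\cos(\theta/2)$ and strictly increases on $[1/2,1]$ back to $1$. Using $\phi\circ G(A,B)(s) = r(s)-1$ inside the strip $\phi^{-1}([-\delta_0,0])$, the set $\{s\in[0,1]\colon \phi(G(A,B)(s))\in [-\delta_0,0]\}$ coincides with $r^{-1}([1-\delta_0,1])$, which is either $[0,s_1]\cup [s_2,1]$ (when the chord dips strictly below $r=1-\delta_0$) or $[0,1/2]\cup [1/2,1]$ (otherwise). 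In both cases it is a union of two intervals on each of which $\phi\circ G(A,B)$ is strictly monotone.

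The main technical obstacle is confirming $H^1$-regularity of $G(A,B)$: Lemma~\ref{thm:riconduco-sfera} only guarantees $\Psi^{-1}$ to be $C^1$ on the strip $\phi^{-1}([-\delta_0,\delta_0])$ and merely a homeomorphism on the deeper region, so the pulled-back segment may fail to be absolutely continuous in the middle. The remedy is to modify $G$ only on the interior portion: keep the formula above on the two outer $s$-intervals where the Euclidean segment lies in $\{\|z\|_E \ge 1-\delta_0\}$ (where it is $C^1$ by Lemma~\ref{thm:riconduco-sfera}), and on the remaining middle interval replace the pulled-back segment by a piecewise smooth curve in $\overline\Omega$ depending continuously on $(A,B)$, obtained via the smooth atlas of $\overline\Omega$ (for instance by a broken-geodesic interpolation between the two ``entry points'' at depth $-\delta_0$). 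Since this change is internal, properties (1)--(3) are preserved, property (4) is unaffected because it concerns only the behavior inside the strip, and continuity of $G$ into $H^1([0,1],\overline\Omega)$ follows from joint continuity of $\Psi$, $\Psi^{-1}$ on the strip together with continuity of the interior interpolation.
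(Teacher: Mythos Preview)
Your approach coincides with the paper's: set $\hat G(A,B)(s)=\Psi^{-1}\bigl((1-s)\Psi(A)+s\Psi(B)\bigr)$, then repair the $H^1$-regularity only on the middle interval $[s_A,s_B]$ where the Euclidean segment dips below radius $1-\delta_0$, by replacing $\hat G$ there with a broken-geodesic approximation of $\hat G\vert_{[s_A,s_B]}$ (the paper fixes a uniform number $N_0$ of nodes, governed by the injectivity radius of $(\overline\Omega,g)$). Your explicit verification of property~(4) via $r(s)^2=1-2(1-\cos\theta)s(1-s)$ is in fact more transparent than what the paper writes.

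There is, however, one genuine step you have not supplied. You claim that ``property~(4) is unaffected because it concerns only the behavior inside the strip,'' but this is only valid if your replacement curve on $[s_A,s_B]$ stays \emph{strictly below} the level $\phi=-\delta_0$. A broken-geodesic approximation of $\hat G\vert_{[s_A,s_B]}$ is uniformly close to $\hat G$, but since $\phi\circ\hat G$ equals exactly $-\delta_0$ at the endpoints $s_A,s_B$, the approximating segments near those endpoints may overshoot into $\phi^{-1}\big(\left]-\delta_0,0\right]\big)$, producing extra components of $\{s:\phi(G(A,B)(s))\in[-\delta_0,0]\}$ on which $\phi\circ G$ is not monotone. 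The paper closes this gap with one additional move: it post-composes on $[s_A,s_B]$ with the gradient flow $\eta^-$ of \eqref{eq:flusso-} to push the interior piece back so that $\phi(G(A,B)(s))<-\delta_0$ throughout $\left]s_A,s_B\right[$. You should add this (or an equivalent) correction. Also, be careful with the phrase ``broken-geodesic interpolation between the two entry points'': a \emph{single} minimal geodesic joining $\hat G(A,B)(s_A)$ to $\hat G(A,B)(s_B)$ need not remain in $\Omega$; what is required---and what the paper does---is a broken-geodesic approximation of the curve $\hat G(A,B)\vert_{[s_A,s_B]}$ itself.
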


\begin{proof}
Let $\Psi: \phi^{-1}([-\infty,\delta_0]) \rightarrow \phi_{\mathbb{D}^m}^{-1}([-\infty,\delta_0])$ be the homeomorphism of Lemma \ref{thm:riconduco-sfera}.
Define
\[
\hat G(A,B)(s) = \Psi^{-1}\big((1-s)\Psi(A)+s\Psi(B)\big),\,A,B\in \overline\Omega.
\]

In general, if $\overline\Omega$ is only homeomorphic to the
disk  $\mathbb D^m$, the above definition produces curves
that in principle are only continuous. In order to produce curves
with an $H^1$-regularity, we use a broken geodesic approximation
argument.
Towards this goal note that if the curve
\[
(1-s)\Psi(A) + s\Psi(B)
\]
intersects $\phi_{\mathbb{D}^m}^{-1}(-\delta_0)$
this happen at the instants
\[
0<s_A \leq s_B < 1,
\]
with $s_A, s_B$ depending continuously by $A,B$ respectively.

Denote by $\varrho(\overline\Omega,g)$ the
infimum of the injectivity radii of all points of
 $\overline\Omega$ relatively
to the metric $g$ (cf. \cite{docarmo}). By compactness, there exists
$N_0\in\N$ with the property that
$\dist\big(\hat G(A,B)(a),\hat G(A,B)(b)\big)\le\varrho(\overline\Omega,g)$ whenever
 $\vert a-b\vert\le\frac1{N_0}$ (where $\dist$ denotes the distance induced by $g$).

Finally, for all $\hat G(A,B)$, denote by
$\gamma_{A,B}$ the broken geodesic obtained as concatenation of the
curves $\gamma_k:[s_A+\frac{k-1}{N_0}(s_B-s_A),s_A+\frac{k}{N_0}(s_B-s_A)]\to M$ given by
the unique minimal geodesic in $(M,g)$ from $G(A,B)(s_A+\frac{k-1}{N_0}(s_B-s_A))$
to $G(A,B)(s_A+\frac{k}{N_0}(s_B-s_A))$, $k=1,\ldots,N_0+1$. Moreover we set
\[
\gamma_{A,B}(s) =\hat G(A,B)(s) \text{ if }s \in [0,s_A] \cup [s_B,1].
\]

Since the minimal geodesic in any convex normal neighborhood depend
continuously (with respect to the $C^2$--norm) on its endpoints, $\gamma_{A,B}$ depends continuously by $(A,B)$ in the $H^{1}$--norm.
Moreover thanks to \eqref{eq:corrispondenza}, $\gamma_{A,B}$ satisfies \eqref{corde1}--\eqref{corde3} provided that $N_0$ is sufficiently large.

Now, using the flow $\eta^{-}$ of \eqref{eq:flusso-}, defined also in a neighborhood of $\phi^{-1}([-\delta_0,\delta_0])$, we can modified
$\gamma_{A,B}$ in $[s_A,s_B]$ obtaining $G$ such that $\phi(G(A,B)(s)) > -\delta_0$ for any $s \in ]s_A,s_B[$. Then, thanks to
\eqref{eq:corrispondenza} $G$ satisfies also
property \eqref{corde5}.
\end{proof}

We set
\begin{equation}\label{eq:2.6bis}
\begin{split}
&\mathfrak C=\big\{G(A,B)\,:\,A,B\in\partial\Omega\big\},\\
&\mathfrak C_0=\{G(A,A)\,:\,A\in\partial\Omega\}.\end{split}\end{equation}

\begin{rem}\label{rem:conseguenze-homeo}
Note  that $\mathfrak C$ is homeomorphic to $\mathbb{S}^{m-1}\times \mathbb{S}^{m-1}$ by a homeomorphism
mapping $\mathfrak C_0$ onto $\{(A,A)\,:\,A\in \mathbb{S}^{m-1}\}$.
\end{rem}

\medskip

Define now the following constant:
\begin{equation}\label{eq:defM0}
M_0=\sup_{x\in\mathfrak C}\int_0^1g(\dot x,\dot x)\,\mathrm dt.
\end{equation}
Since $\mathfrak C$ is compact and the integral in
\eqref{eq:defM0} is continuous in the $H^1$-topology, then
$M_0<+\infty$.
\bigskip
Finally we define the following subset of $\mathfrak M_0$:
\begin{equation}\label{eq:defMM}
\mathfrak M=\Big\{x\in\mathfrak M_0: \frac12\int_a^bg(\dot x,\dot
x)\,\mathrm dt< M_0 \quad \forall [a,b] \in \mathcal I_x \Big\}.
\end{equation}

We shall work in $\mathfrak M$ using flows in
$H^1\big([0,1],\R^m\big)$ for which $\mathfrak M$ is invariant.


\section{Geometrically critical values and variationally
critical portions}\label{sec:functional}

In this section we will introduce two different notions
of {\em criticality\/} for curves in $\Mcal$.

\begin{defin}\label{thm:defgeomcrit}
A number $c\in\left]0,M_0\right[$ will be called a {\em geometrically critical
value} if there exists an OGC $\gamma$ parameterized in $[0,1]$ such that
$\frac12\int_0^1g(\dot\gamma,\dot\gamma)\,\text dt=c$.
A number which is not geometrically critical will be called  {\em geometrically regular value}.
\end{defin}

\bigskip

It is important to observe that, in view to obtain multiplicity results,
distinct geometrically critical values yield
geometrically distinct orthogonal geodesic chords:
\begin{prop}\label{thm:distinct}
Let $c_1\ne c_2$, $c_1,c_2>0$ be distinct geometrically critical
values  with corresponding OGC $x_1,x_2$. Then
$x_1\big([0,1]\big)\ne x_2\big([0,1]\big)$.
\end{prop}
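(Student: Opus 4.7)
The strategy is to prove the contrapositive: if $x_1([0,1])=x_2([0,1])=:\Gamma$, then $c_1=c_2$. Each OGC $x_i$ satisfies $x_i(]0,1[)\subset\Omega$ by Definition~\ref{thm:defOGC}, so $\Gamma\cap\partial\Omega=\{x_i(0),x_i(1)\}$ for $i=1,2$. The first step is to rule out the ``closed loop'' case $x_i(0)=x_i(1)$ via the argument of Remark~\ref{rem:noROGC}: antipodality of the inward and outward unit normals at a single boundary point forces $\dot x_i(0)=-\dot x_i(1)$, so the curve $t\mapsto x_i(1-t)$ shares both initial position and initial velocity with $x_i$; uniqueness of geodesics yields $x_i(t)=x_i(1-t)$, whence $\dot x_i(1/2)=0$, contradicting $c_i>0$.

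Setting $A=x_1(0)$, $B=x_1(1)$, the previous step gives $A\neq B$, and $\{x_2(0),x_2(1)\}=\{A,B\}$. Since the time-reversal of an OGC is itself an OGC of the same energy and the same image, I may reduce to the case $x_2(0)=A$, $x_2(1)=B$. Smoothness of $\partial\Omega$ yields a unique inward unit normal $\mathfrak n$ at $A$, and orthogonality combined with constant-speed geodesy on $[0,1]$ gives $\dot x_i(0)=L_i\,\mathfrak n$ with $L_i=\sqrt{2c_i}$. Letting $\gamma:I\to M$ be the arclength-parameterized maximal geodesic in $M$ with $\gamma(0)=A$ and $\dot\gamma(0)=\mathfrak n$, one has $x_i(t)=\gamma(L_i t)$, and the hypothesis $x_1([0,1])=x_2([0,1])$ becomes $\gamma([0,L_1])=\gamma([0,L_2])$.

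Assuming without loss of generality $L_1\leq L_2$, suppose for contradiction $L_1<L_2$. Then $B=\gamma(L_1)=x_2(L_1/L_2)$ lies on $\partial\Omega$ at the interior time $L_1/L_2\in\,]0,1[$ of $x_2$, contradicting the crossing condition $x_2(]0,1[)\subset\Omega$ of the OGC $x_2$ (equivalently, making $x_2$ a WOGC and violating the standing assumption~\eqref{eq:ipotesi}). Hence $L_1=L_2$, i.e.\ $c_1=c_2$, contradicting the hypothesis. I anticipate the main delicate step to be the endpoint case analysis at the outset---in particular, eliminating the closed-loop case via Remark~\ref{rem:noROGC}---after which the crossing property of OGCs immediately delivers the result.
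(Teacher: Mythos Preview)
Your proof is correct and follows essentially the same approach as the paper's: reduce to $x_1(0)=x_2(0)$, use orthogonality at the boundary to obtain parallel (inward-pointing) initial velocities of distinct lengths, and derive a contradiction from $x_1(1)\in\partial\Omega$ appearing at an interior parameter of $x_2$. Your version is slightly more detailed---explicitly disposing of the closed-loop case via Remark~\ref{rem:noROGC} and phrasing the final contradiction directly through the crossing condition $x_2(\,]0,1[\,)\subset\Omega$---but the argument is the same.
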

\begin{proof}
The OGC's $x_1$ and $x_2$ are parameterized in the interval $[0,1]$. Assume by contradiction,
$x_1([0,1])=x_2([0,1])$. Since
\[
x_{i}(]0,1[) \subset \Omega \text{ for any }i=1,2,
\]
we have
\[\{x_1(0),x_1(1)\}=
\{x_2(0),x_2(1)\}.\]
Up to reversing the orientation of $x_2$,  we can assume $x_1(0)=x_2(0)$.
Since $x_1$ and $x_2$ are OGC's, $\dot x_1(0)$ and $\dot x_2(0)$
are parallel, but the condition $c_1\ne c_2$ says that $\dot x_1(0)\ne\dot x_2(0)$.
Then  there exists $\lambda > 0, \lambda\not=1$ such that $\dot x_2(0) =\lambda \dot x_1(0)$ and therefore,
by the uniqueness of the Cauchy problem for geodesics we have $x_2(s)=x_1(\lambda s)$. Up to exchange $x_1$ with $x_2$
we can assume $\lambda > 1$. Since $x_2(\frac{1}{\lambda}) = x_1(1) \in\partial\Omega$, the transversality of
$\dot x_2(0)$ to $\partial\Omega$ implies the existence of $\bar s \in ]\frac{1}{\lambda},1]$ such that
$x_2(\bar s) \not\in\overline\Omega$, getting a contradiction.
\end{proof}

A notion of criticality will now be given in terms of
variational vector fields.
For $x\in \Mcal$, let $\mathcal V^+(x)$ denote
the following closed convex cone of $T_x H^1\big([0,1],\R^m\big)$:
\begin{equation}\label{eq:defV+(x)}
\mathcal V^+(x)=\big\{V\in T_x H^1\big([0,1],\R^m\big):g\big(V(s),
\nabla\phi\big(x(s)\big)\big)\ge0\ \text{for $x(s)\in\partial\Omega$}\big\};
\end{equation}
vector fields in $\mathcal V^+(x)$ are interpreted as
infinitesimal variations of $x$ by curves stretching ``outwards''
from the set $\overline\Omega$.

\begin{defin}\label{thm:defvariatcrit}
Let $x\in\mathfrak M$ and $[a,b] \subset [0,1]$; we say that
$x\vert_{[a,b]}$ is a $\mathcal V^{+}$--\emph{\em variationally critical portion\/} of
$x$ if $x\vert_{[a,b]}$ is not constant and if
\begin{equation}
\label{eq:varcriticality-plus}\int_a^bg\big(\dot x,\Ddt V\big)\,\mathrm
dt\ge0, \quad\forall\,V\in\mathcal V^+(x).
\end{equation}
\end{defin}

Similarly, for $x\in\Mcal$ we
define the cone:
\begin{equation}\label{eq:defV-(x)}
\mathcal V^-(x)=\big\{V\in T_x H^1\big([0,1],\R^m\big):g\big(V(s),
\nabla\phi\big(x(s)\big)\big)\le0\ \text{for $x(s)\in
\partial \Omega$}\big\},
\end{equation}
and we give the following

\begin{defin}\label{thm:defvariatcrit-}
Let $x\in\mathfrak M$ and $[a,b] \subset [0,1]$; we say that
$x\vert_{[a,b]}$ is a $\mathcal V^{-}$--\emph{variationally critical portion\/} of
$x$ if $x\vert_{[a,b]}$ is not constant and if
\begin{equation}
\label{eq:varcriticality}\int_a^bg\big(\dot x,\Ddt V\big)\,\mathrm
dt\ge0, \quad\forall\,V\in\mathcal V^-(x).
\end{equation}
\end{defin}

The integral in \eqref{eq:varcriticality} gives precisely the
first variation of the geodesic action functional in $(M,g)$ along
$x\vert_{[a,b]}$. Hence, variationally critical portions are
interpreted as those curves $x\vert_{[a,b]}$ whose geodesic energy
is {\em not decreased\/} after  infinitesimal variations by curves
stretching outwards from the set $\overline\Omega$. The motivation
for using outwards pushing infinitesimal variations is due to the
concavity of $\overline\Omega$. Indeed in the convex case it is customary
to use a curve shortening method in $\overline \Omega$,
that can be seen as the use of a flow constructed by
infinitesimal variations of $x$ in $\mathcal V^-(x)$,
keeping the endpoints of $x$ on $\partial\Omega$.

Flows obtained as integral flows of convex combinations of vector fields in $\mathcal V^+(x)$ play, in a certain sense,
the leading role in our variational approach. However we shall use also integral flows
of convex combinations of vector fields in $\mathcal V^-(x)$ to avoid certain variationally critical portions that do not correspond
to OGC's.
\smallskip

Clearly, we are interested in determining existence of
geometrically critical values. In order to use a variational
approach we will first have to keep into consideration the more general
class of $\mathcal V^{+}$--variationally critical portions. A central issue in our
theory consists in studying the relations between $\mathcal V^{+}$--variationally
critical portions $x\vert_{[a,b]}$ and OGC's. From now on
$\mathcal V^{+}$--variationally critical portions, will be called simply
variationally critical portions.

\section{Classification of variationally critical portions}
\label{sub:description}
Let us now take a look at how variationally critical portions look like. In first place, let us point out
that regular variationally critical portions are OGC's. In order to prove this, the following Lemma is crucial. Its proof can be found in \cite{esistenza}.

\begin{lem}\label{thm:leminsez4.4}
Let $x\in\mathfrak M$ be fixed, and let $[a,b]\in[0,1]$ be such
that $x\vert_{[a,b]}$ is a (non--constant) variationally critical
portion of $x$, with $x(a),x(b)\in\partial\Omega$ and
$x\big([a,b]\big)\subset\overline\Omega$. Then:
\begin{enumerate}
\item\label{itm:numfinint} $x^{-1}\big(\partial\Omega)\cap[a,b]$ consists of
a finite number of closed intervals and isolated points;
\item\label{itm:constconcomp} $x$ is constant on each connected
component of $x^{-1}\big(\partial\Omega)\cap[a,b]$;
\item\label{itm:piecC2} $x\vert_{[a,b]}$ is piecewise $C^2$, and the discontinuities of
$\dot x$ may occur only at points in $\partial\Omega$;
\item\label{itm:portgeo} each $C^2$ portion of $x\vert_{[a,b]}$ is a geodesic
in $\overline\Omega$.
\item\label{itm:addizionale} $\inf\{\phi(x(s))\,:\,s\in[a,b]\}<-\delta_0$.
\end{enumerate}
\end{lem}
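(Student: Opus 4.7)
The plan is to exploit the one-sided variational condition \eqref{eq:varcriticality-plus} separately in three regimes: on open sub-intervals where $x$ lies strictly inside $\Omega$, the condition is automatically two-sided and therefore encodes the full geodesic equation; on sub-intervals where $x$ sits on $\partial\Omega$, strong concavity forces $x$ to be stationary; and finally the finiteness claim follows from combining property \eqref{eq:1.1bis} with the a priori estimate in Lemma \ref{thm:lemmacazzatina} and the energy bound built into $\Mcal$.

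To start, I would prove items \eqref{itm:portgeo} and the smooth part of \eqref{itm:piecC2}. If $I\subset[a,b]$ is an open interval with $x(I)\subset\Omega$, then for any $V\in T_xH^1$ compactly supported in $I$ both $V$ and $-V$ belong to $\Vcal^+(x)$, because the defining constraint $g(V,\nabla\phi\circ x)\ge 0$ is vacuous (it is imposed only where $x\in\partial\Omega$). Applying \eqref{eq:varcriticality-plus} to $V$ and to $-V$ yields $\int_I g(\dot x,\Ddt V)\,\mathrm dt=0$ for every such $V$, which is the weak geodesic equation; elliptic bootstrap then gives $x|_I\in C^2$ and that $x|_I$ is a geodesic of $(M,g)$.

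The core step is \eqref{itm:constconcomp}. Let $J$ be a connected component of $x^{-1}(\partial\Omega)\cap[a,b]$ with nonempty interior $\mathring J$, and suppose by contradiction that $x$ is not constant on $\mathring J$. For $V$ with compact support in $\mathring J$ and $g(V,\nabla\phi\circ x)\equiv 0$, both $\pm V$ lie in $\Vcal^+(x)$, so the equality in \eqref{eq:varcriticality-plus} gives the weak geodesic equation of $(\partial\Omega,g|_{\partial\Omega})$; hence $x|_{\mathring J}$ is a $C^2$ geodesic of $\partial\Omega$, and the Gauss formula yields $\Ddt\dot x=\second(\dot x,\dot x)$, a vector normal to $\partial\Omega$. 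By strong concavity (Definition \ref{thm:defstrongconcavity}) and the sign convention of \eqref{eq:seches}, one has $g\big(\second(\dot x,\dot x),\nabla\phi\big)>0$ wherever $\dot x\ne 0$. Now pick $V=\psi\,\nabla\phi(x)$ with $\psi\ge 0$ smooth and compactly supported in $\mathring J$, and $\psi>0$ on some open subset where $\dot x\ne 0$; then $V\in\Vcal^+(x)$, so \eqref{eq:varcriticality-plus} combined with integration by parts produces
\[
0\le\int_a^b g(\dot x,\Ddt V)\,\mathrm dt=-\int_a^b \psi\,g\bigl(\second(\dot x,\dot x),\nabla\phi\bigr)\,\mathrm dt<0,
\]
a contradiction. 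Thus $x|_{\mathring J}$ is constant, and by continuity so is $x|_J$.

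Finally \eqref{itm:numfinint} and \eqref{itm:addizionale} follow at once. By the previous two steps, $[a,b]\setminus x^{-1}(\partial\Omega)$ is a disjoint union $\bigsqcup_k(a_k,b_k)$ of open intervals on each of which $x$ is a non-constant geodesic arc of $M$ with $x(a_k),x(b_k)\in\partial\Omega$ and $x\bigl((a_k,b_k)\bigr)\subset\Omega$. Property \eqref{eq:1.1bis} gives some $\bar s_k\in(a_k,b_k)$ with $\phi(x(\bar s_k))<-\delta_0$, and Lemma \ref{thm:lemmacazzatina} then yields $(b_k-a_k)\int_{a_k}^{b_k}g(\dot x,\dot x)\,\mathrm d\sigma\ge\delta_0^2/K_0^2$; since $b_k-a_k\le 1$, each arc has energy at least $\delta_0^2/K_0^2$. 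Because $[a,b]$ is contained in a maximal interval of $\Ical_x$, the definition of $\Mcal$ bounds $\int_a^bg(\dot x,\dot x)\,\mathrm d\sigma$ by $2M_0$, so the number of arcs is at most $2M_0K_0^2/\delta_0^2$, proving \eqref{itm:numfinint}. Assertion \eqref{itm:addizionale} is immediate: since $x|_{[a,b]}$ is non-constant and constant on $x^{-1}(\partial\Omega)$, at least one arc $(a_k,b_k)$ exists, along which $\inf\phi(x)<-\delta_0$.

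The main obstacle is step \eqref{itm:constconcomp}: ruling out non-constant portions living on $\partial\Omega$. The argument genuinely requires both halves of the one-sided condition—tangential variations to identify $x$ with a geodesic of $\partial\Omega$, and outward-normal variations combined with the sign of $\second$ provided by strong concavity to derive the contradiction. Without strong concavity, the flow-generating cone $\Vcal^+$ would not be capable of detecting such boundary segments as non-critical.
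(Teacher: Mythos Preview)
Your argument is correct. The paper itself does not prove this lemma but defers to \cite{esistenza}; your proof follows the natural line of reasoning---unconstrained variations on interior arcs to get geodesics, tangential plus outward-normal variations on boundary segments combined with strong concavity to rule out non-constant boundary pieces, and then the energy lower bound from \eqref{eq:1.1bis} and Lemma~\ref{thm:lemmacazzatina} against the $M_0$ cap in $\mathfrak M$ to obtain finiteness---which is exactly the mechanism the paper relies on implicitly.
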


Using the previous Lemmas, we can now prove the following:
\begin{prop}\label{thm:regcritpt}
Assume that there are no WOGC's  in $\overline\Omega$. Let
$x\in\Mcal$ and $[a,b]\in {\mathcal I_x^0}$ be such that
$x\vert_{[a,b]}$ is a variationally critical portion of $x$ and such
that the restriction of $x$ to $[a,b]$ is of class $C^1$. Then,
$x\vert_{[a,b]}$ is an orthogonal geodesic chord in
$\overline\Omega$.
\end{prop}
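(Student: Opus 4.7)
The plan is to exploit Lemma~\ref{thm:leminsez4.4} to reduce $x\vert_{[a,b]}$ to a single smooth geodesic of $(M,g)$, and then use the variational criticality condition to read off orthogonality via an integration-by-parts argument.

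First I would analyze the structure of $x\vert_{[a,b]}$. By Lemma~\ref{thm:leminsez4.4}, the set $x^{-1}(\partial\Omega)\cap[a,b]$ is a finite union of closed intervals and isolated points, $x$ is constant on each connected component, and on each subinterval of the complement $x$ is a $C^2$ geodesic in $\overline\Omega$. The $C^1$-regularity on $[a,b]$ rules out any component of positive length: if $x$ were constant on some $[c,d]\subset[a,b]$ with $c<d$, then $\dot x\equiv0$ there, and the adjacent geodesic arc would have zero initial velocity, hence be constant; iterating, $x\vert_{[a,b]}$ would be constant, contradicting Definition~\ref{thm:defvariatcrit}. So $x^{-1}(\partial\Omega)\cap[a,b]$ is a finite set $\{a,s_{1},\dots,s_{k},b\}$ of isolated points, and because $\dot x$ is continuous across each $s_{j}$, the adjacent geodesic arcs glue into a single geodesic of $(M,g)$ defined on the whole interval $[a,b]$.

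Next I would establish orthogonality at the endpoints. Fix $w\in T_{x(a)}\partial\Omega$; then $g\big(w,\nabla\phi(x(a))\big)=0$, so both $w$ and $-w$ are admissible test directions at $a$ in the sense of $\mathcal V^{+}$. I build a variation $V\in\mathcal V^{+}(x)$ on $[0,1]$ with $V(a)=w$, $V(b)=0$, smooth on $(a,b)$, supported in a small neighborhood of $[a,b]$, using a smooth cutoff away from $\partial[a,b]$. Applying integration by parts on the single geodesic $x\vert_{[a,b]}$ (so that $\Ddt\dot x=0$ and there are no interior boundary contributions because $\dot x$ and $V$ are continuous at each $s_{j}$) yields
\[
0\le\int_{a}^{b}g\big(\dot x,\Ddt V\big)\,\mathrm dt=g\big(\dot x(b),V(b)\big)-g\big(\dot x(a),V(a)\big)=-g\big(\dot x(a),w\big).
\]
Running the same argument with $-w$ in place of $w$ gives the reverse inequality, so $g\big(\dot x(a),w\big)=0$ for every $w\in T_{x(a)}\partial\Omega$, i.e.\ $\dot x(a^{+})\in\big(T_{x(a)}\partial\Omega\big)^{\perp}$. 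The symmetric choice at the other endpoint yields $\dot x(b^{-})\in\big(T_{x(b)}\partial\Omega\big)^{\perp}$.

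Finally, I conclude: $x\vert_{[a,b]}$ is a geodesic of $(M,g)$ with image in $\overline\Omega$, endpoints in $\partial\Omega$, and orthogonal to $\partial\Omega$ at both endpoints. If some interior $s_{j}\in(a,b)$ with $x(s_{j})\in\partial\Omega$ existed, then $x\vert_{[a,b]}$ would be a weak orthogonal geodesic chord, contradicting the standing assumption~\eqref{eq:ipotesi}. Therefore $x\big((a,b)\big)\subset\Omega$, and $x\vert_{[a,b]}$ is an orthogonal geodesic chord. The main technical obstacle is the construction of the global test field $V\in\mathcal V^{+}(x)$ in the second paragraph, because the constraint $g\big(V(s),\nabla\phi(x(s))\big)\ge0$ must be respected at every $s\in[0,1]$ where $x(s)\in\partial\Omega$, not just at $s=a,b$. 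This is handled by supporting $V$ inside a sufficiently small neighborhood of $[a,b]$ and, if necessary, adding a vanishing outward-normal perturbation and passing to the limit to keep the $\mathcal V^{+}$ constraint strictly satisfied away from the two endpoints.
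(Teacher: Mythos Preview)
Your argument is correct and follows essentially the same route as the paper's proof: use Lemma~\ref{thm:leminsez4.4} together with $C^1$-regularity to reduce $x\vert_{[a,b]}$ to a single geodesic arc, integrate by parts to obtain orthogonality at the endpoints, and then invoke the no-WOGC assumption to exclude interior contacts with $\partial\Omega$. The technical obstacle you flag at the end is handled more simply than you suggest: for the endpoint $a$, just support $V$ in a short interval $[a,a+\varepsilon)$ disjoint from all the (finitely many) other points of $x^{-1}(\partial\Omega)$, so that $V$ vanishes at every other boundary-touching instant and the $\mathcal V^{+}$ constraint is trivially satisfied there; no normal perturbation or limiting argument is needed.
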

\begin{proof}
$C^1$--regularity, together with \eqref{itm:numfinint} and \eqref{itm:constconcomp} of Lemma \ref{thm:leminsez4.4}, show that
$x^{-1}(\partial\Omega)\cap[a,b]$ consists only of a finite number of isolated points.
Then, by the $C^1$ regularity on $[a,b]$ and parts \eqref{itm:piecC2}--\eqref{itm:portgeo} of Lemma \ref{thm:leminsez4.4}, $x$ is
a geodesic on the whole interval $[a,b]$. Moreover an integration by parts argument shows
that $\dot x(a)$ and $\dot x(b)$ are orthogonal to $T_{x(a)}\partial\Omega$ and
$T_{x(b)}\partial\Omega$ respectively. Finally, since there are no WOGC's on $\overline\Omega$, $x\vert_{[a,b]}$ is an OGC.
\end{proof}

Variationally critical portions $x\vert_{[a,b]}$ of class $C^1$ will be called
{\em regular variationally critical portions};
those critical portions that do not belong to this class will
be called {\em irregular}.
Irregular variationally critical portions of curves $x\in\Mcal$
are further divided into two subclasses, described in the Proposition below, whose proof can be obtained using Lemma \ref{thm:leminsez4.4} as done for the proof
of Proposition \ref{thm:regcritpt}.
\begin{prop}\label{thm:irregcritpt}
Assume that there are not WOGC's in $\overline\Omega$.
Let $x\in\Mcal$ and let $[a,b]\in {\mathcal I_x^0}$ be such that
$x\vert_{[a,b]}$ is an irregular variationally critical portion of
$x$. Then, there exists a subinterval $[\alpha,\beta]\subset
[a,b]$ such that $x\vert_{[a,\alpha]}$ and $x\vert_{[\beta,b]}$
are constant (in $\partial\Omega$), $\dot x(\alpha^+)\in
T_{x(\alpha)}(\partial\Omega)^\perp$, $\dot x(\beta^-)\in
T_{x(\beta)}(\partial\Omega)^\perp$, and one of the two mutually
exclusive situations occurs:
\begin{enumerate}
\item \label{itm:cusp} there exists a finite number of intervals
$[t_1,t_2]\subset\left]\alpha,\beta\right[$ such that
$x\big([t_1,t_2]\big)\subset\partial\Omega$ and that are maximal
with respect to this property; moreover, $x$ is constant on each
such interval $[t_1,t_2]$, and $\dot x(t_1^-)\ne\dot x(t_2^+)$;
\item \label{itm:stop} $x\vert_{[\alpha,\beta]}$ is an OGC in
$\overline\Omega$.
\end{enumerate}
\end{prop}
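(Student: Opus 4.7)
The plan is to follow the pattern of the proof of Proposition~\ref{thm:regcritpt}, applying Lemma~\ref{thm:leminsez4.4} to decompose $x|_{[a,b]}$ and using the variational inequality \eqref{eq:varcriticality-plus} at every transition time to extract orthogonality and jump information. By parts \eqref{itm:numfinint}--\eqref{itm:portgeo} of Lemma~\ref{thm:leminsez4.4}, $x^{-1}(\partial\Omega)\cap[a,b]$ is the disjoint union of finitely many (possibly degenerate) closed intervals on which $x$ is constant, separated by finitely many open arcs on each of which $x$ is a non-trivial $C^2$ geodesic in $\overline\Omega$. Because $x(a),x(b)\in\partial\Omega$, I would set $\alpha$ to be the right endpoint of the connected component of $x^{-1}(\partial\Omega)\cap[a,b]$ containing $a$, and $\beta$ the left endpoint of the component containing $b$ (allowing the degenerate cases $\alpha=a$ or $\beta=b$); by construction $x$ is constant on $[a,\alpha]$ and on $[\beta,b]$.

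Next I would establish $\dot x(\alpha^+)\in T_{x(\alpha)}(\partial\Omega)^\perp$ (and symmetrically at $\beta$) by a one-sided variation argument. Pick $V\in T_x H^1$ supported in a small neighborhood of $\alpha$ disjoint from every other transition time, with $V\equiv w$ on the intersection of the support with $[a,\alpha]$, where $w\in T_{x(\alpha)}\partial\Omega$. Since $g(w,\nabla\phi(x(\alpha)))=0$, both $V$ and $-V$ lie in $\mathcal V^+(x)$; substituting each into \eqref{eq:varcriticality-plus} and integrating by parts, using $\dot x\equiv 0$ on $[a,\alpha]$ and $\Ddt\dot x=0$ on the geodesic arc to the right of $\alpha$, yields $\mp g(\dot x(\alpha^+),w)\ge 0$, hence the asserted orthogonality for every $w\in T_{x(\alpha)}\partial\Omega$.

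The dichotomy of the conclusion is then read off from $x^{-1}(\partial\Omega)\cap\,]\alpha,\beta[\,$. If this set is empty, part \eqref{itm:piecC2} of Lemma~\ref{thm:leminsez4.4} forces $C^1$ regularity of $x$ on $]\alpha,\beta[\,$, so $x|_{[\alpha,\beta]}$ is a single $C^2$ geodesic with image in $\Omega$ on the open interval and orthogonal endpoints, i.e.\ an OGC, giving case~\eqref{itm:stop}. Otherwise the finitely many maximal components of that set are the intervals $[t_1,t_2]\subset\,]\alpha,\beta[\,$ of case~\eqref{itm:cusp}, each a locus of constancy of $x$ by part~\eqref{itm:constconcomp}. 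For a non-degenerate component $t_1<t_2$, the same first-variation computation applied to variations supported separately near $t_1$ and $t_2$ forces $\dot x(t_1^-)=c_1\,\nabla\phi(x(t_1))$ with $c_1>0$ and $\dot x(t_2^+)=-c_2\,\nabla\phi(x(t_1))$ with $c_2>0$ (both positive because the adjacent geodesic arcs are non-constant), so these two velocities point in opposite normal directions and are in particular distinct.

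The delicate point I anticipate is the degenerate case $t_1=t_2$, corresponding to an isolated interior contact of $x$ with $\partial\Omega$: the analogous jump computation gives $\dot x(t_1^-)-\dot x(t_1^+)=c\,\nabla\phi(x(t_1))$ with $c\ge 0$, and the content of the statement amounts to $c\ne 0$. To rule out $c=0$ one appeals to the no-WOGC hypothesis: if $c$ vanished, $\dot x$ would be continuous at $t_1$, and by uniqueness of the Cauchy problem for geodesics the two adjacent arcs would coincide with a single $C^2$ geodesic of $(M,g)$ with image in $\overline\Omega$ meeting $\partial\Omega$ at the interior point $t_1$, that is, a WOGC in $\overline\Omega$, contradicting~\eqref{eq:ipotesi}. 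Exactly at this step the interplay between the strong concavity of $\overline\Omega$ --- which is what allows a tangent geodesic to remain in $\overline\Omega$ on both sides and hence makes the WOGC scenario realizable --- and the standing assumption that no WOGC exists provides the key ingredient.
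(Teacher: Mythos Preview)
Your approach---decompose via Lemma~\ref{thm:leminsez4.4} and read off orthogonality and jump conditions by localized first-variation tests, exactly as in Proposition~\ref{thm:regcritpt}---is precisely the route the paper indicates, and your treatment of $\alpha,\beta$, of the OGC alternative~\eqref{itm:stop}, and of non-degenerate cusp intervals is correct.

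The gap is in your final paragraph. When $c=0$ at an isolated interior contact $t_0$, the merged $C^2$ geodesic you obtain has image in $\overline\Omega$, endpoints on $\partial\Omega$ (at the two \emph{adjacent} boundary components of $x^{-1}(\partial\Omega)$), and an interior touch of $\partial\Omega$ at $t_0$; but this is only a \emph{weak geodesic chord}, not a WOGC. By Definition~\ref{thm:defOGC} a WOGC must be orthogonal to $\partial\Omega$ at both endpoints, and you have not verified this for the merged arc: if an adjacent boundary component is itself a degenerate (single-point) cusp with positive jump, the one-sided velocity there can have a nonzero tangential part---indeed Remark~\ref{rem:rem4.10} explicitly records that at cusp intervals $\dot x(t_1^-)$ and $\dot x(t_2^+)$ need not be orthogonal to $\partial\Omega$. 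So hypothesis~\eqref{eq:ipotesi} is not yet triggered. To close the argument you must extend the geodesic maximally through \emph{all} contiguous $c=0$ touches and then show that the endpoints of this maximal $C^2$ arc are forced to lie at $\alpha$, $\beta$, or at the boundary of a non-degenerate cusp interval (the cases where your earlier computation does yield orthogonal incidence), thereby producing a genuine WOGC; the remaining case, where the maximal arc terminates at another degenerate cusp with positive jump, is exactly what still needs an argument.
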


Irregular variationally critical portions in the class described
in part  \eqref{itm:cusp} will be called {\em of first type},
those described in part \eqref{itm:stop} will be called {\em of
second type}. An interval $[t_1,t_2]$ as in part \eqref{itm:cusp}
will be called a {\em cusp interval\/} of the irregular critical
portion $x$.

\begin{rem}\label{rem:rem4.10}
We observe here that, due to the strong concavity assumption, if
$x\in\Mcal$ is an irregular variationally critical point of
first  type and $[t_1,t_2],[s_1,s_2]$ are cusp intervals for $x$
contained in $[a,b]$ with $t_2<s_1$,    then
\[\text{there exists}\  s_0\in\left]t_2,
s_1\right[\text{\ with\ }\phi(x(s_0))< -\delta_0,\] (see Remark\ref{thm:remopencondition}). This implies
that the number of cusp intervals of irregular variationally
critical portions $x\vert_{[a,b]}$,  is uniformly bounded (see Lemma \ref{thm:lemmacazzatina}).

We also remark that at each cusp interval $[t_1,t_2]$ of $x$, the vectors $\dot
x(t_1^-)$ and $\dot x(t_2^+)$ may not be  orthogonal to
$\partial\Omega$. If $x\vert_{[a,b]}$ is a irregular critical
portion of the first type, and if $[t_1,t_2]$ is a cusp interval of
for $x$, we will set
\begin{equation}\label{eq:4.15}
\Theta_x(t_1,t_2) = \text{the (unoriented) angle between the
vectors\ } \dot x(t_1^-) \text{\ and\ } \dot x(t_2^+);
\end{equation}
observe that $\Theta_x(t_1,t_2)\in\left]0,\pi\right]$.
\end{rem}

\begin{rem}\label{thm:rem4.11bis}We observe that if $[t_1,t_2]$ is
a cusp interval for $x$, then the tangential components of $\dot
x(t_1^-)$ and of $\dot x(t_2^+)$ along $\partial\Omega$ are equal;
this is is easily obtained with an integration by parts argument. It
follows that if $\Theta_x(t_1,t_2)>0$, then $\dot x(t_1^-)$ and
$\dot x(t_2^+)$ cannot be both tangent to $\partial\Omega$.
\end{rem}

We will denote by $\mathcal Z$ the set of all curves having variationally critical portions:
\[\mathcal Z=\big\{x\in\Mcal:\exists\,[a,b]\subset[0,1]\ \text{such that
$x\vert_{[a,b]}$ is a variationally critical portion of $x$}
\big\};\] the following compactness property holds for $\mathcal
Z$:
\begin{prop}\label{thm:Zrcompatto}
If $x_n$ is a sequence in $\mathcal Z$ and $[a_n,b_n]\in{\mathcal
J_{x_n}^0}$ is  such that ${x_n}\vert_{[a_n,b_n]}$ is a (non-constant)
variationally critical portion of $x_n$, then, up to subsequences,  as $n\to\infty$
$a_n$ converges to some $a$, $b_n$ converges to some $b$, with
$0\le a<b\le1$, and the sequence of paths $x_n:[a_n,b_n]\to\overline\Omega$ is $H^1$-convergent (in the sense of Remark \ref{rem:rep}) to some
curve $x:[a,b]\to\overline \Omega$ which is variationally critical.
\end{prop}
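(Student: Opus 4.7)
The plan is to extract convergent subsequences of both the endpoints $a_n,b_n$ and of the reparametrized curves, identify the limit as a non-constant piecewise geodesic, and verify its variational criticality. By compactness of $[0,1]^2$, pass to a subsequence with $a_n\to a$ and $b_n\to b$, $0\le a\le b\le 1$. To show $a<b$, invoke part \eqref{itm:addizionale} of Lemma~\ref{thm:leminsez4.4} to produce $\bar s_n\in[a_n,b_n]$ with $\phi\bigl(x_n(\bar s_n)\bigr)<-\delta_0$. Since $x_n\in\Mcal$ and $[a_n,b_n]$ is contained in some interval of $\mathcal I_{x_n}$, the energy $\int_{a_n}^{b_n} g(\dot x_n,\dot x_n)\,\mathrm dt$ is bounded by $2M_0$, so inequality \eqref{eq:2.9fabio} of Lemma~\ref{thm:lemmacazzatina} applied with $\delta=\delta_0$ gives $b_n-a_n\ge\delta_0^2/(2K_0^2 M_0)$, hence $b-a>0$.

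Let $\hat x_n:[a,b]\to\overline\Omega$ denote the affine reparametrization of $x_n|_{[a_n,b_n]}$. The uniform energy bound makes $\{\hat x_n\}$ bounded in $H^1([a,b],\R^m)$, so after extraction $\hat x_n\rightharpoonup x$ weakly in $H^1$ and uniformly on $[a,b]$, with $x([a,b])\subset\overline\Omega$ and $x(a),x(b)\in\partial\Omega$. To upgrade to strong $H^1$-convergence, I exploit the structural information in Lemma~\ref{thm:leminsez4.4} and Proposition~\ref{thm:irregcritpt}: each $x_n|_{[a_n,b_n]}$ is piecewise $C^2$-geodesic, with discontinuities of $\dot x_n$ only on $\partial\Omega$, and by Remark~\ref{rem:rem4.10} the number of cusp and constancy intervals is bounded independently of $n$. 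Extracting once more, all their endpoints converge, producing a finite partition $a=\sigma_0\le\sigma_1\le\cdots\le\sigma_N=b$. On each $(\sigma_{i-1},\sigma_i)$, $\hat x_n$ is a geodesic of speed uniformly bounded by $\sqrt{2M_0/(b-a)}$, so by continuous dependence of the geodesic flow on initial data---which converge thanks to the uniform convergence of $\hat x_n$ at interior points---the sequence converges in $C^2_{\mathrm{loc}}$ on each piece, and combined with the bounded break-point count this yields strong $H^1$-convergence on all of $[a,b]$.

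It remains to verify that the limit $x$ is variationally critical. Given $V\in\mathcal V^+(x)$, I construct test fields $V_n\in\mathcal V^+(\hat x_n)$ with $V_n\to V$ strongly in $H^1$: using normal coordinates along $\partial\Omega$ (or the homeomorphism of Lemma~\ref{thm:riconduco-sfera}), parallel-transport $V$ from $x$ to $\hat x_n$ along short geodesics connecting corresponding points, then correct the result by a nonnegative multiple of $\nabla\phi(\hat x_n)$ supported on a small neighborhood of $\hat x_n^{-1}(\partial\Omega)$, so as to enforce $g(V_n,\nabla\phi(\hat x_n))\ge 0$ on that set. Passing to the limit in $\int_{a_n}^{b_n} g\bigl(\dot x_n,\Ddt V_n\bigr)\,\mathrm dt\ge 0$ using strong $H^1$-convergence of $\hat x_n$ and $V_n$ then yields \eqref{eq:varcriticality-plus} for $x$; non-constancy follows because $\inf_{[a,b]}\phi(x)\le-\delta_0$ by uniform convergence. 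The main obstacle is precisely this constraint-preserving approximation of $V$: the contact set $\{s:x(s)\in\partial\Omega\}$ need not be the limit of $\{s:\hat x_n(s)\in\partial\Omega\}$, so the cone condition must be recovered without spoiling strong convergence. Strong concavity (Definition~\ref{thm:defstrongconcavity}) and the estimates of Remark~\ref{rem:rem4.10}---which force the curves to dip below $-\delta_0$ between distinct contact components---provide the uniform quantitative control that makes the correction term small and uniform in $n$.
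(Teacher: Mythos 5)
Your proposal is correct and, in its first two steps, is essentially the paper's argument: the lower bound on $b_n-a_n$ comes from Lemma~\ref{thm:lemmacazzatina} together with item \eqref{itm:addizionale} of Lemma~\ref{thm:leminsez4.4} and the energy bound $M_0$, and strong $H^1$-convergence is obtained, as in the paper, by splitting off the uniformly bounded number of cusp/constancy intervals (Remark~\ref{rem:rem4.10}) and using compactness of bounded-energy geodesics on the remaining pieces; note that the same two lemmas also give a uniform \emph{lower} bound on the parameter length of each geodesic piece (every such piece dips below $-\delta_0$ by \eqref{eq:1.1bis}), which is what excludes energy concentration at degenerating pieces and is worth stating explicitly when you upgrade weak to strong convergence. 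Where you genuinely diverge is the last step. The paper verifies criticality of the limit directly: since $x$ is a piecewise geodesic with breaks only on $\partial\Omega$, one integrates $\int_a^b g(\dot x,\Ddt V)\,\mathrm dt$ by parts and checks that the resulting boundary terms are nonnegative for every $V\in\mathcal V^+(x)$. You instead pass to the limit in \eqref{eq:varcriticality-plus} along $\hat x_n$ after correcting the test field, which also works, but you overstate the obstacle: since $\hat x_n\to x$ uniformly, contact parameters of $\hat x_n$ can only accumulate at contact parameters of $x$, hence $\sup\big\{\max\big(0,-g(V(s),\nabla\phi(\hat x_n(s)))\big):\ \hat x_n(s)\in\partial\Omega\big\}\to0$, and $V_n=V+c_n\,\nabla\phi\circ\hat x_n$ with suitable constants $c_n\to0$ already lies in $\mathcal V^+(\hat x_n)$ and converges to $V$ in $H^1$; no parallel transport, no localization of the correction, and no appeal to strong concavity are needed at this point (concavity is used earlier, to control the number and length of the pieces). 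With that simplification your limit passage closes cleanly; the paper's route exploits the classification of the limit curve, yours only the stability of the cone constraint under uniform convergence, and both yield the proposition.
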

\begin{proof}
By Lemma~\ref{thm:lemmacazzatina}, $b_n-a_n$
is bounded away from
$0$, which implies the existence of subsequences converging in
$[0,1]$ to $a$ and $b$ respectively, and with $a<b$. If $x_n$ is a
sequence of regular variationally critical portions, then the
conclusion follows easily observing that $x_n$, and thus $\widehat
x_n$ (its affine reparameterization in $[a,b]$) is a sequence of
geodesics with image in a compact set and having  bounded energy.

For the general case, one simply observes that the number of cusp
intervals of each $x_n$ is bounded uniformly in $n$, and the
argument above can be repeated by considering the restrictions of
$x_n$ to the complement of the union of all cusp intervals.
Finally, using partial integration of the term $\int_a^b g(\dot x,\Ddt
V)\,\text dt$, one observes that it is nonnegative for all $
V\in\mathcal V^+(x)$, hence $x$ is variationally critical.
\end{proof}

\begin{rem}\label{rem:rem4.12}
We point out  that the first part of the proof of Proposition
\ref{thm:Zrcompatto} shows that if $x_n\in\mathcal Z$ and
$[a_n,b_n]\in {\mathcal I_{x_n}^0}$ is an interval such that
${x_n}\vert_{[a_n,b_n]}$ is an OGC, then, up to subsequences, there
exists $[a,b]\subset[0,1]$ and $x:[a,b]\to\overline\Omega$ such
that ${x_n}\vert_{[a_n,b_n]}\to x\vert_{[a,b]}$ in $H^1$ and
$x$ is an OGC.
\end{rem}

Since we are assuming that there are no  WOGC in $\overline\Omega$,
by  Lemma~\ref{thm:leminsez4.4}, Proposition~\ref{thm:regcritpt},
Proposition~\ref{thm:irregcritpt} and Proposition~\ref{thm:Zrcompatto},
we obtain immediately the following result.

\begin{cor}\label{thm:cor4.11bis}There exists $d_0>0$ such that for any
$x\vert_{[a,b]}$ irregular variationally portion of first type with  $[a,b] \in \mathcal I_x^0$, there exists
a cusp interval $[t_1,t_2]\subset[a,b]$ for $x$ such that
\[
\Theta_x(t_1,t_2)\ge d_0.
\]
\end{cor}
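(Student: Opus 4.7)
The plan is to argue by contradiction. Assume there is a sequence of irregular variationally critical portions of first type $x_n|_{[a_n,b_n]}$ with cusp intervals $[t_1^{n,k},t_2^{n,k}]$, $k=1,\ldots,N_n$, satisfying
\[
\max_{k}\Theta_{x_n}(t_1^{n,k},t_2^{n,k})\longrightarrow 0.
\]
By Remark~\ref{rem:rem4.10} the numbers $N_n$ are uniformly bounded, so after passing to a subsequence I may assume $N_n\equiv N$. Proposition~\ref{thm:Zrcompatto} then furnishes a further subsequence and an interval $[a,b]\subset[0,1]$ with $a<b$ such that $x_n\to x$ in $H^1$ and $x|_{[a,b]}$ is variationally critical. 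By Propositions~\ref{thm:regcritpt} and~\ref{thm:irregcritpt}, $x$ has constant initial and terminal portions on $[a,\alpha]$ and $[\beta,b]$ (possibly degenerate), with $\dot x(\alpha^+)$ and $\dot x(\beta^-)$ orthogonal to $\partial\Omega$, and geodesic pieces in between.

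Next I extract convergent subsequences $t_1^{n,k}\to t_1^k$, $t_2^{n,k}\to t_2^k$ with $\alpha\le t_1^k\le t_2^k\le\beta$. The crucial observation is that each geodesic piece of $x_n$ between consecutive cusp intervals has bounded initial speed (since the total energy is at most $M_0$, cf.\ \eqref{eq:defM0}) and image in a compact set, so by standard dependence of solutions of the geodesic ODE these pieces converge in $C^2$ to the corresponding geodesic pieces of $x$. In particular, the one-sided derivatives of $\dot x_n$ at $t_1^{n,k}$ and $t_2^{n,k}$ converge to the one-sided derivatives of $\dot x$ at $t_1^k$ and $t_2^k$. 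Combined with the vanishing of the cusp angles, this yields that $\dot x(t_1^k-)$ and $\dot x(t_2^k+)$ are parallel in the same direction; Remark~\ref{thm:rem4.11bis} then forces both vectors to be tangent to $\partial\Omega$ at $x(t_1^k)\in\partial\Omega$, and in fact equal. Uniqueness of the geodesic Cauchy problem now implies that the two pieces meeting at each cusp lie on a single geodesic trajectory, tangent to $\partial\Omega$ at $x(t_1^k)$ and, by strong concavity, remaining inside $\overline\Omega$.

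Collapsing all constant intervals and reparameterizing therefore yields a single geodesic $\tilde x$ with endpoints $x(\alpha),x(\beta)\in\partial\Omega$, orthogonal to $\partial\Omega$ at both endpoints, and passing through each $x(t_1^k)\in\partial\Omega$. To ensure that these interior boundary points do not collapse to the endpoints of $\tilde x$, I apply Lemma~\ref{thm:lemmacazzatina} to the first geodesic piece $x_n|_{[\alpha_n,t_1^{n,1}]}$: by Remark~\ref{thm:remopencondition} it must contain a point with $\phi<-\delta_0$, so its length is bounded below by a positive constant depending only on $\delta_0$, $K_0$ and $M_0$; the same bound holds for the final piece and for each interval between consecutive cusps. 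Hence the $t_1^k,t_2^k$ are strictly interior, and $\tilde x$ is a genuine weak orthogonal geodesic chord, contradicting the standing assumption \eqref{eq:ipotesi}.

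The main obstacle is the passage to the limit in the second paragraph: the number of cusps of $x_n$ that survive as non-degenerate cusps of $x$ may be strictly smaller than $N$, so one must classify separately the cusp intervals that collapse to points and those that remain non-degenerate, while preserving the tangent-vector information in either case. The $C^2$ convergence on each maximal geodesic piece is what allows the cusp-angle hypothesis to survive the limit, and Remark~\ref{thm:rem4.11bis} is what turns a vanishing angle into a genuine geodesic-continuation property, so that the limiting object is a single geodesic trajectory rather than a broken curve.
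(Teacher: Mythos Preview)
Your argument is correct and is exactly the compactness-and-contradiction route the paper has in mind: the paper gives no details beyond citing Lemma~\ref{thm:leminsez4.4}, Propositions~\ref{thm:regcritpt}, \ref{thm:irregcritpt}, \ref{thm:Zrcompatto} and the standing no-WOGC hypothesis~\eqref{eq:ipotesi}, and you have written out precisely the argument those citations encode. The only cosmetic point is that your invocation of Remark~\ref{thm:rem4.11bis} is really the equality of tangential components for $x_n$ passed to the limit (since in the limit the angle is zero and the interval need not be a ``cusp interval'' of $x$ in the strict sense), but the conclusion---equal tangential parts plus zero angle plus opposite-signed normal parts forces both one-sided velocities to be tangent and equal---is exactly as you state.
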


\section{The notion of topological non-essential interval}\label{V-}

As observed in \cite{esistenza}, we need three different types of flows, whose formal definition
will be given below. ``Outgoing flows'' are applied to paths that are \emph{far} from variationally critical portions
(cf.\ Definition \ref{thm:defvariatcrit}). ``Reparameterization flows'' are applied to curves that are \emph{close}  to irregular variational portions of second type. ``Ingoing flows'' are used to avoid irregular variational portions of first type. In order to describe this type of homotopies, we introduce the notion of
\textit{topological non-essential interval}, which is a key point in defining the admissible homotopies. The possibility of avoiding irregular variational portions of first type is based on the following {\em regularity\/} property
of the critical variational portions with respect to ingoing directions.

\begin{lem}\label{thm:lem5.3}
Let $y\in H^1\big([a,b],\overline\Omega\big)$ be such that:
\begin{equation}\label{eq:eq5.6}
\int_a^bg\big(\dot y,\Ddt V)\,\mathrm dt\ge0,\qquad\forall\,V\in\mathcal V^-(y)
\ \text{with}\ V(a)=V(b)=0.
\end{equation}
Then, $y\in H^{2,\infty}([a,b],\overline\Omega)$ and in particular it is of class $C^1$.
\end{lem}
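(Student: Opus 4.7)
The inequality \eqref{eq:eq5.6} is precisely the Euler--Lagrange inequality for $y$ being a critical point of the geodesic energy among $H^1$--curves with fixed endpoints subject to the one-sided obstacle $\phi\circ y\le 0$. The target $H^{2,\infty}=C^{1,1}$ regularity is the standard regularity for one-dimensional obstacle problems, and I would obtain it in three steps.

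First, on each connected component of the open set $\{s\in\left]a,b\right[:y(s)\in\Omega\}$, every compactly supported test field $V$ satisfies $\pm V\in\mathcal V^-(y)$, since the constraint defining $\mathcal V^-$ is vacuous away from $y^{-1}(\partial\Omega)$. Hence on each such component \eqref{eq:eq5.6} becomes an equality and yields the geodesic equation $\Ddt\dot y=0$; by ODE theory $y$ is $C^\infty$ there and $\Vert\dot y\Vert$ is constant.

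Next, I would extract a nonnegative Lagrange multiplier. Choose $V=-\eta\,\nabla\phi(y)$ with $\eta\in C^1_c\bigl(\left]a,b\right[\bigr)$, $\eta\ge 0$; since $g\bigl(V,\nabla\phi(y)\bigr)=-\eta\,\Vert\nabla\phi(y)\Vert^2\le 0$ at every $s$, such $V$ lies in $\mathcal V^-(y)$ and vanishes at $a,b$. Substituting into \eqref{eq:eq5.6} shows that $\eta\mapsto\int_a^b g\bigl(\dot y,\Ddt V\bigr)\,\mathrm dt$ defines a nonnegative distribution in $\eta$, so by the Riesz representation theorem there is a nonnegative Radon measure $\lambda$ on $[a,b]$, supported on $y^{-1}(\partial\Omega)$, with
\[
\Ddt\dot y \;=\; \lambda\,\nabla\phi(y)
\]
distributionally on $\left]a,b\right[$. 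Tangential test fields (which lie in $\mathcal V^+(y)\cap\mathcal V^-(y)$) yield equality in the tangential part and confirm that no additional contribution appears.

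Finally, I would upgrade $\lambda$ to an $L^\infty$ function using the strong concavity of $\partial\Omega$. Set $u(s)=\phi\bigl(y(s)\bigr)\le 0$; a direct computation yields, distributionally,
\[
\ddot u \;=\; \mathrm H^\phi(y)(\dot y,\dot y)+\lambda\,\Vert\nabla\phi(y)\Vert^2.
\]
Since $u$ attains its maximum value $0$ on $\{u=0\}=\mathrm{supp}(\lambda)$, one has $\dot u=0$ almost everywhere there, so $\dot y$ is tangent to $\partial\Omega$ almost everywhere on $\mathrm{supp}(\lambda)$; Remark~\ref{thm:newremopencondition} then yields $\mathrm H^\phi(y)(\dot y,\dot y)\le 0$ on this set. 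Moreover $\ddot u\le 0$ there in the distributional sense, again because $u$ attains its maximum on $\{u=0\}$. Combining these facts,
\[
0\;\le\;\lambda\,\Vert\nabla\phi(y)\Vert^2 \;\le\; -\mathrm H^\phi(y)(\dot y,\dot y)\;\le\; C\,\Vert\dot y\Vert^2,
\]
and $\Vert\dot y\Vert$ is piecewise constant on the geodesic components of $\{u<0\}$ by step one; a matching bound on the contact set follows from the tangency together with the strict concavity estimate. Consequently $\lambda\in L^\infty$, whence $\Ddt\dot y\in L^\infty$, i.e.\ $y\in H^{2,\infty}\bigl([a,b],\overline\Omega\bigr)$, and in particular $y\in C^1$.

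The main obstacle will be the uniform $L^\infty$ bound on $\Vert\dot y\Vert$ (and hence on $\lambda$) on the contact set: a priori only $\dot y\in L^2$, so propagating the piecewise constant speed information from the geodesic pieces across $y^{-1}(\partial\Omega)$ requires combining the kinematic constraint $g\bigl(\nabla\phi(y),\dot y\bigr)=0$ a.e.\ on $\{u=0\}$ with the strict strong concavity estimate on $\mathrm H^\phi$.
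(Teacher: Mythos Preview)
The paper does not actually prove this lemma; it simply cites \cite[Lemma~3.2]{London}. So there is no in-paper argument to compare against, and your sketch is in fact \emph{more} detailed than what appears here.

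Your three-step plan is the standard route to $C^{1,1}$ regularity for one-dimensional obstacle problems and is correct in outline: free geodesic on $\{\phi(y)<0\}$, a nonnegative Radon multiplier $\lambda$ supported on the contact set via the Riesz theorem, and then an $L^\infty$ bound on $\lambda$ from the constraint $u:=\phi\circ y\le 0$. The gap you yourself flag---the a priori $L^\infty$ control of $\Vert\dot y\Vert$---is real, but it is not circular: once step~2 gives $\Ddt\dot y=\lambda\,\nabla\phi(y)$ with $\lambda$ a \emph{locally finite} nonnegative measure, $\dot y$ is automatically in $BV_{\mathrm{loc}}$, hence in $L^\infty_{\mathrm{loc}}$, and the right-hand side of your inequality $\lambda\Vert\nabla\phi(y)\Vert^2\le -\mathrm H^\phi(y)(\dot y,\dot y)$ becomes locally bounded without further work.

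The step that does need more care is the assertion ``$\ddot u\le 0$ on $\{u=0\}$ in the distributional sense''. For atoms this follows from the one-sided derivative inequalities $\dot u(s_0^+)\le 0\le\dot u(s_0^-)$ at a maximum (available once $\dot u\in BV$), which forces the atomic part of $\lambda$ to vanish. On interior intervals of the contact set one has $u\equiv 0$ and the constrained-geodesic equation of Remark~\ref{rem:4.16bis} gives $\lambda$ explicitly and bounded. Making this uniform across a possibly complicated contact set is exactly the work done in \cite{London}; a clean alternative, also standard, is a penalization argument that produces $H^{2,\infty}$ bounds uniformly along the approximating sequence.
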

\begin{proof}
See for instance \cite[Lemma 3.2]{London}.
\end{proof}
\begin{rem}\label{rem:4.16bis}
Note that, under the assumption of strong concavity, the set \[C_y=\big\{s\in[a,b]\,:\,\phi(y(s))=0\big\}\] consists
of a finite number of intervals. On each one of these intervals, $y$ is of class $C^2$, and it satisfies the ``constrained geodesic'' differential equation
\begin{equation}\label{eq:4.30bis}
\Dds\dot y(s)=-\left[\frac1{g(\nu(y(s)),\nabla\phi(y(s)))}H^\phi(y(s))[\dot y(s),\dot y(s)]\right]\nu(y(s)).
\end{equation}
\end{rem}

\begin{rem}\label{rem:4.17}
For every $\delta\in\left]0,\delta_0\right]$ we have the following property: for any $x\in\mathfrak M$ and $[a,b] \in {\mathcal I}_x$
such that $x\vert_{[a,b]}$ is an irregular variationally critical
portion of first type, there exists an interval $[\alpha,\beta]\subset[a,b]$ and
a cusp interval  $[t_1,t_2]\subset[\alpha,\beta]$ such that:
\begin{equation}\label{eq:alphabeta}
\Theta_x(t_1,t_2)\ge d_0, \text{ and } \phi(x(\alpha))=\phi(x(\beta))=-\delta,
\end{equation}
where $d_0$ is given in Corollary \ref{thm:cor4.11bis}.

Note that $g\big(\nabla\phi(x(\alpha)),\dot x(\alpha)\big)>0$ and  $g\big(\nabla\phi(x(\beta)),\dot x(\beta)\big)<0$,
by the strong concavity assumption.
\end{rem}
\medskip

For the remaining of the paper we will
denote by \[\pi:\phi^{-1}\big([-\delta_0,0]\big)\longrightarrow\phi^{-1}(0)\] the
retraction onto $\partial\Omega$ obtained from the inverse of the exponential map
of the normal bundle of $\phi^{-1}(0)$. By Remark \ref{thm:rem4.11bis},
a simple contradiction argument
shows that the following properties are satisfied by irregular variationally critical portions of first type
(see also Corollary \ref{thm:cor4.11bis}):

\begin{lem}\label{thm:lem4.18}
There exists $\bar\gamma>0$ and $\delta_1\in\left]0,\delta_0\right[$ such that, for all $\delta\in\left]0,\delta_1\right]$, for any $x\in{\mathfrak M}$
such that $x\vert_{[a,b]}$ is an irregular variationally critical portion of first type,
and for any interval $[\alpha,\beta] \subset [a,b]$ that contains a cusp interval
$[t_1,t_2]$ satisfying \eqref{eq:alphabeta},
the following inequality holds:
\begin{equation}\label{eq:4.31}
\max\Big\{\Vert x(\beta)-\pi(x(\alpha))\Vert_{E},\,
\Vert x(\alpha)-\pi(x(\beta))\Vert_{E}\Big\}\ge(1+2\bar\gamma)
\Vert\pi(x(\beta))-\pi(x(\alpha))\Vert_{E},
\end{equation}
\end{lem}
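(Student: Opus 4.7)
My plan is to prove the lemma by combining a contradiction–compactness argument with a local Taylor analysis at the cusp point. Suppose the statement fails: then there exist sequences $\delta_n\downarrow 0$ and $\bar\gamma_n\downarrow 0$ together with curves $x_n\in\mathfrak M$ whose restrictions are irregular variationally critical portions of first type containing cusp intervals $[t_{1,n},t_{2,n}]\subset[\alpha_n,\beta_n]$ satisfying \eqref{eq:alphabeta}, but for which \eqref{eq:4.31} fails with $\bar\gamma_n$ in place of $\bar\gamma$. Since $\partial\Omega$ is compact and the energy on $[\alpha_n,\beta_n]$ is uniformly bounded by the definition of $\mathfrak M$, after passing to subsequences the cusp points $P_n=x_n(t_{1,n})$ converge to some $P\in\partial\Omega$ and the one-sided velocities $v_1^{(n)}:=\dot x_n(t_{1,n}^-)$, $v_2^{(n)}:=\dot x_n(t_{2,n}^+)$ converge to $v_1,v_2\in T_PM$. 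By Remark~\ref{thm:rem4.11bis} the tangential components of $v_1^{(n)}$ and $v_2^{(n)}$ coincide, so $v_1-v_2$ is purely normal at $P$; the condition $\Theta\ge d_0$ yields a strictly positive lower bound on $|v_1-v_2|$, and strong concavity forces the normal components of $v_1$ and $v_2$ to have opposite signs.

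Working in local coordinates adapted to $\partial\Omega$ at $P$, in which $\partial\Omega$ is represented as a graph $\{z_m=h(z')\}$ with $h(0)=0$, $\nabla h(0)=0$ and $D^2h(0)$ positive definite (by strong concavity), the geodesic segments $x_n\vert_{[\alpha_n,t_{1,n}]}$ and $x_n\vert_{[t_{2,n},\beta_n]}$ are straight lines to leading order. Setting $\tau_n=t_{1,n}-\alpha_n$, $\sigma_n=\beta_n-t_{2,n}$ and writing $v^T_n$ for the common tangential component, the constraints $\phi(x_n(\alpha_n))=\phi(x_n(\beta_n))=-\delta_n$ combined with \eqref{eq:seches} give the quadratic relations
\begin{equation*}
\tau_n\,(v_1^{(n)})^N+\tfrac12\tau_n^2\,\second_{\nabla\phi(P_n)}(v^T_n,v^T_n)=\delta_n+O(\delta_n^{3/2}),
\end{equation*}
and an analogous equation for $\sigma_n$. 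Solving the normal-bundle exponential equation defining $\pi$ yields, to leading order, $\pi(x_n(\alpha_n))=P_n-\tau_n v^T_n$ and $\pi(x_n(\beta_n))=P_n+\sigma_n v^T_n$, with higher-order boundary-curvature corrections controlled by strong concavity.

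Substituting these expansions into the three Euclidean distances in \eqref{eq:4.31} gives explicit expressions in $\tau_n,\sigma_n,v^T_n,(v_i^{(n)})^N$ and $\delta_n$. Using the lower bound on $|(v_1)^N-(v_2)^N|$ coming from $\Theta\ge d_0$ together with the opposite-sign condition on the normal components from strong concavity, one extracts a universal multiplicative gap between $\max\{\|x_n(\beta_n)-\pi(x_n(\alpha_n))\|_E,\|x_n(\alpha_n)-\pi(x_n(\beta_n))\|_E\}$ and $\|\pi(x_n(\beta_n))-\pi(x_n(\alpha_n))\|_E$ independent of $n$; this yields a positive $\bar\gamma$ contradicting $\bar\gamma_n\downarrow 0$. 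The main obstacle is the regime in which one of the normal speeds $(v_i^{(n)})^N$ degenerates to zero: there the scaling of $\tau_n$ (or $\sigma_n$) with $\delta_n$ changes from linear (driven by the normal component) to $\sqrt{\delta_n}$ (driven by the second fundamental form), and one must carefully exploit the cooperative contributions of strong concavity in both the $\phi$-expansion and the normal projection to preserve the uniform lower bound on the ratio in \eqref{eq:4.31}.
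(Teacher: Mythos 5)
There is a genuine gap, and it sits precisely at the point you flag as the ``main obstacle'' and then dispose of by assertion. The criticality condition at a cusp only forces the jump $\dot x(t_1^-)-\dot x(t_2^+)$ to be a \emph{nonnegative multiple of the outward normal} (this is the content of Remark~\ref{thm:rem4.11bis}, which excludes only that \emph{both} one--sided velocities be tangent); hence a configuration with $\dot x(t_1^-)$ transversal and $\dot x(t_2^+)$ tangent to $\partial\Omega$ is perfectly compatible with being an irregular critical portion of first type and with $\Theta_x(t_1,t_2)\ge d_0$. In that regime your own expansions give $\tau_n\asymp\delta_n$ but $\sigma_n\asymp\sqrt{\delta_n}$, so that, writing $x(\alpha)=\pi(x(\alpha))-\delta N(\pi(x(\alpha)))$, $x(\beta)=\pi(x(\beta))-\delta N(\pi(x(\beta)))$ and $u=\pi(x(\beta))-\pi(x(\alpha))$, one finds $\Vert u\Vert_E\asymp\sqrt{\delta}$ while the two cross distances satisfy $\Vert x(\beta)-\pi(x(\alpha))\Vert_E^2=\Vert u\Vert_E^2-2\delta\langle u,N(\pi(x(\beta)))\rangle+\delta^2$ and $\Vert x(\alpha)-\pi(x(\beta))\Vert_E^2=\Vert u\Vert_E^2+2\delta\langle u,N(\pi(x(\alpha)))\rangle+\delta^2$; the curvature (strong concavity) and the cusp angle only make the gain terms of size $O(\delta^2)$, so the ratio in \eqref{eq:4.31} is $1+O(\delta)$, not bounded below by a fixed $1+2\bar\gamma$ as $\delta_n\downarrow 0$. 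Thus the contradiction you aim for never materializes in the degenerate case: the ``cooperative contributions of strong concavity'' you invoke produce a gap that itself degenerates linearly in $\delta$, and no Taylor bookkeeping can restore uniformity. To close the argument you must either rule out (nearly) tangential one--sided velocities at the selected cusp for first--type critical portions --- which local criticality does not do, and for which you give no argument --- or prove the estimate for a \emph{fixed} $\delta$ with $\bar\gamma=\bar\gamma(\delta)>0$ (of order $\delta$), which is in fact all that the later applications with the fixed constant $\bar\delta$ in Definition~\ref{def:4.21} and Remark~\ref{rem:4.22} require.

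Two smaller points. The opposite (weak) signs of the normal components come simply from $\phi\circ x\le 0$ with equality at the cusp, not from strong concavity; and the quantitative lower bound on $|v_1-v_2|$ from $\Theta\ge d_0$ needs a uniform positive lower bound on the speeds of the geodesic arcs, which you should justify (it follows because each arc is a boundary--to--boundary geodesic, hence dips below $-\delta_0$ by \eqref{eq:1.1bis}, giving length and duration bounds via Lemma~\ref{thm:lemmacazzatina} and the energy bound in $\mathfrak M$). You also implicitly take $[\alpha,\beta]$ to be the local excursion on which $\phi\ge-\delta$, so that $x(\alpha),x(\beta)$ lie on the two arcs adjacent to the cusp; this is the intended reading (cf.\ Remark~\ref{rem:4.17}) and should be stated. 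For comparison, the paper offers no detailed proof of this lemma (it is dismissed as ``a simple contradiction argument'' based on Remark~\ref{thm:rem4.11bis}, with details deferred to \cite{esistenza}), so your expansion scheme is a reasonable attempt; but as written it does not prove the stated uniform inequality.
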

\noindent (recall that $\Vert \cdot \Vert_{E}$ denotes the Euclidean norm).

\bigskip

The following Lemma says that curves satisfying \eqref{eq:4.31}
and those that  satisfy \eqref{eq:eq5.6} are contained in \emph{disjoint} closed subsets;
in other words, curves satisfying \eqref{eq:4.31} are far from
being critical with respect to $\mathcal V^-$. In particular,
the set of irregular variationally critical portions of first type consists of curves
at which the value of the energy functional can be decreased by deforming in the directions
of $\mathcal V^-$.

Let $\bar\gamma$ be as in Lemma~\ref{thm:lem4.18}.
\begin{lem}\label{thm:lem4.19} There exists $\delta_2\in\left]0,\delta_0\right[$ with the following property: for any $\delta\in\left]0,\delta_2\right]$,
for any $[a,b]\subset\R$ and for any $y\in H^1([a,b],\overline\Omega)$ satisfying \eqref{eq:eq5.6} and
\[
\phi(y(a))=\phi(y(b))=-\delta,
\phi(y(\bar t))=0 \text{ for some }\bar t\in ]a,b[,
\]
the following inequality holds:
\begin{equation}\label{eq:4.32}
\max\Big\{\Vert y(b)-\pi(y(a))\Vert_{E},\,\Vert y(a)-\pi(y(b))\Vert_{E}\Big\}
\le \left(1+\frac{\bar\gamma}2\right)\Vert\pi(y(b))-\pi(y(a))\Vert_{E}.
\end{equation}
\end{lem}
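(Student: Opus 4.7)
The plan is to argue by contradiction using the $C^1$ regularity of $y$ furnished by Lemma~\ref{thm:lem5.3}, combined with the sharp quadratic ``grazing'' estimate forced by strong concavity. This inequality is the counterpart of \eqref{eq:4.31}: in Lemma~\ref{thm:lem4.18} the presence of a cusp of angle $\ge d_0$ widens the excess by $2\bar\gamma$, while here the absence of cusps shrinks the loss to $\bar\gamma/2$.

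Suppose the conclusion fails; then there exist sequences $\delta_n\searrow 0$, intervals $[a_n,b_n]$ and curves $y_n\in H^1([a_n,b_n],\overline\Omega)$ satisfying \eqref{eq:eq5.6} and the depth conditions, but violating \eqref{eq:4.32}. Write $A_n=\pi(y_n(a_n))$, $B_n=\pi(y_n(b_n))$ and, up to swapping $a_n$ and $b_n$, assume
\[
\Vert y_n(b_n)-A_n\Vert_E>(1+\bar\gamma/2)\Vert B_n-A_n\Vert_E.
\]
Since $y_n(b_n)$ lies on the normal line to $\partial\Omega$ at $B_n$ at signed depth $-\delta_n$, the definition of $\pi$ gives $\Vert y_n(b_n)-B_n\Vert_E=O(\delta_n)$, uniformly in $n$. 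The triangle inequality $\Vert y_n(b_n)-A_n\Vert_E\le\Vert y_n(b_n)-B_n\Vert_E+\Vert B_n-A_n\Vert_E$, combined with the failed estimate, forces
\[
\Vert B_n-A_n\Vert_E\le\tfrac{2}{\bar\gamma}\Vert y_n(b_n)-B_n\Vert_E=O(\delta_n).
\]

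The contradiction will come from a matching lower bound $\Vert B_n-A_n\Vert_E\ge C\sqrt{\delta_n}$ produced by strong concavity on the entry and exit segments. By Lemma~\ref{thm:lem5.3} each $y_n$ is $C^1$ (in fact $H^{2,\infty}$). By Remark~\ref{rem:4.16bis} the contact set $C_{y_n}=\{t:\phi(y_n(t))=0\}$ is a finite union of intervals on which $y_n$ satisfies \eqref{eq:4.30bis}; outside $C_{y_n}$, $y_n$ is a geodesic in $\Omega$. Because $\phi\circ y_n\le 0$ with equality exactly on $C_{y_n}$, the $C^1$ condition forces $\dot y_n$ to be tangent to $\partial\Omega$ at every boundary point of $C_{y_n}$. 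Let $\bar t_{n,1}=\min C_{y_n}$ be the first touch. On $[a_n,\bar t_{n,1}]$ the curve is a pure geodesic in $\overline\Omega$ with $\dot y_n(\bar t_{n,1})\in T_{y_n(\bar t_{n,1})}\partial\Omega$, so along it
\[
\tfrac{\mathrm d^2}{\mathrm dt^2}\phi(y_n(t))\Big|_{t=\bar t_{n,1}}=H^\phi(y_n(\bar t_{n,1}))[\dot y_n(\bar t_{n,1}),\dot y_n(\bar t_{n,1})]\le -c_0|\dot y_n(\bar t_{n,1})|^2
\]
for some $c_0>0$ independent of $n$, by strong concavity and compactness. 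A Taylor expansion at $\bar t_{n,1}$ yields
\[
\delta_n=-\phi(y_n(a_n))\ge\tfrac{1}{2}c_0|\dot y_n(\bar t_{n,1})|^2(\bar t_{n,1}-a_n)^2(1-o(1)),
\]
so the Euclidean displacement along the entry segment satisfies $\Vert y_n(a_n)-y_n(\bar t_{n,1})\Vert_E\ge|\dot y_n(\bar t_{n,1})|(\bar t_{n,1}-a_n)(1-o(1))\ge\sqrt{2\delta_n/c_0}(1-o(1))$. The symmetric estimate on $[\bar t_{n,2},b_n]$ with $\bar t_{n,2}=\max C_{y_n}$ produces the analogous lower bound for the exit segment.

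Pulling these two one-sided estimates back to $\partial\Omega$ through $\pi$ and combining them with the $C^1$ matching of tangent directions at every endpoint of every connected component of $C_{y_n}$ gives $\Vert B_n-A_n\Vert_E\ge C'\sqrt{\delta_n}$, contradicting $\Vert B_n-A_n\Vert_E=O(\delta_n)$ for $n$ large; choosing $\delta_2$ so small that $C'\sqrt{\delta_2}>(2/\bar\gamma)\cdot O(\delta_2)$ completes the argument. The main technical obstacle is handling the middle portion $y_n|_{[\bar t_{n,1},\bar t_{n,2}]}$ when $C_{y_n}$ has several components: the curve there alternates constrained-geodesic arcs on $\partial\Omega$ with ``grazing'' interior-geodesic arcs, and one must verify that this concatenation cannot cancel the net horizontal progress accumulated on the entry and exit segments. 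The essential ingredient preventing such cancellation is precisely the $C^1$ tangent-matching (which rules out the direction reversals that characterize the cusps of Lemma~\ref{thm:lem4.18}), together with the same strong-concavity bound applied to each interior subarc of the middle portion.
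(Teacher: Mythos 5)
The paper itself gives no internal proof of this lemma (it defers entirely to \cite{esistenza}), so your attempt can only be judged on its own terms, and as written it does not close. First, the quantitative step is run in the wrong direction. At the tangential touching point $\bar t_{n,1}$ you correctly get $f(\bar t_{n,1})=f'(\bar t_{n,1})=0$ and $f''\le -c_0\vert\dot y_n\vert^2$ for $f=\phi\circ y_n$, but the Taylor inequality $\delta_n\ge\tfrac12 c_0\vert\dot y_n\vert^2(\bar t_{n,1}-a_n)^2$ yields the \emph{upper} bound $\vert\dot y_n\vert(\bar t_{n,1}-a_n)\le\sqrt{2\delta_n/c_0}$, not the lower bound you then invoke; the chain ``$\ge\sqrt{2\delta_n/c_0}$'' is a non sequitur. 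The displacement lower bound of order $\sqrt{\delta_n}$ that your scheme actually needs comes from the \emph{opposite} estimate $\delta_n\le\tfrac12 C_1\big(\vert\dot y_n\vert(\bar t_{n,1}-a_n)\big)^2$, i.e.\ from an upper bound $C_1$ on $\Vert \mathrm H^\phi\Vert$ over the collar (mere $C^2$-boundedness, not concavity), and it is only valid on a sub-segment that stays inside the collar $\phi^{-1}([-\delta_0,0])$ — something you never verify (the entry geodesic may leave the collar before time $a_n$, and the intermediate point in Taylor's formula may lie where you have no control on $f''$).

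Second, and more seriously, the decisive step — that the entry and exit contributions cannot be cancelled by the middle portion, so that $\Vert\pi(y_n(b_n))-\pi(y_n(a_n))\Vert_E\ge C'\sqrt{\delta_n}$ — is only asserted, and it is not a consequence of $C^1$ tangent-matching. Curves satisfying \eqref{eq:eq5.6} may contain arbitrarily long boundary arcs solving the constrained geodesic equation \eqref{eq:4.30bis} (Remark~\ref{rem:4.16bis}), and such an arc can carry the exit point all the way around $\partial\Omega$. Concretely, let $\overline\Omega$ be a geodesic cap of radius greater than $\pi/2$ in the round $2$-sphere: it is compact, homeomorphic to a disk, and strongly concave. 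The concatenation of a great-circle arc entering tangentially from depth $\delta$, a boundary arc of angular length close to $2\pi$, and a tangent great-circle arc returning to depth $\delta$, parameterized with constant speed, is $C^1$, satisfies \eqref{eq:eq5.6} (the boundary arc has $\Dds\dot y$ a positive multiple of $\nabla\phi$, so the first variation against any $V\in\mathcal V^-$ is nonnegative), and can be arranged so that $\pi(y(b))$ is arbitrarily close to, or equal to, $\pi(y(a))$; then the left-hand side of \eqref{eq:4.32} is of order $\sqrt\delta$ or at least $\delta$ while the right-hand side is arbitrarily small. So the ``no cancellation'' principle you rely on is false at the stated level of generality, and no argument along these lines can succeed without importing the additional restrictions on the curves (interval/energy control excluding such wrapping) under which the result is actually established in \cite{esistenza}.
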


\begin{proof}
See \cite{esistenza}.
\end{proof}
Using vector fields in $\mathcal V^-(x),\,x\in\mathfrak M$, we can build a flow moving away from
the set of irregular variationally critical portions of first type,
without increasing the energy functional. To this aim let $\pi,\,\bar\gamma,\,\delta_1,\,\delta_2$ be chosen
as in Lemma \ref{thm:lem4.18} and \ref{thm:lem4.19}, and set
\begin{equation}\label{eq:4.37}
\bar\delta=\min\{\delta_1,\delta_2\}.
\end{equation}
Let us give the following:
\begin{defin}\label{thm:defcostapp}
Let $x\in\mathfrak M$, $[a,b]\in \mathcal I_x^{0}$ and $[\alpha,\beta]\subset[a,b]$.
We say that \emph{$x$ is $\bar\delta$-close to $\partial\Omega$ on $[\alpha,\beta]$} if the following situation occurs:
\begin{enumerate}
\item\label{itm:app1} $\phi(x(\alpha))=\phi(x(\beta))=-\bar\delta$;
\item\label{itm:app2} $\phi(x(s))\ge-\bar\delta$ for all $s\in[\alpha,\beta]$;
\item\label{itm:app3}  there exists $s_0\in\left]\alpha,\beta\right[$ such that $\phi(x(s_0))>-\bar\delta$;
\item $[\alpha,\beta]$ is minimal with respect to properties \eqref{itm:app1}, \eqref{itm:app2} and \eqref{itm:app3}.

\end{enumerate}
If $x$ is $\bar\delta$-close to $\partial\Omega$ on $[\alpha,\beta]$, the \emph{maximal proximity} of
$x$ to $\partial\Omega$ on $[\alpha,\beta]$ is defined to be the quantity
\begin{equation}\label{eq:maxprox}
\mathfrak p^x_{\alpha,\beta}=\max_{s\in[\alpha,\beta]}\phi(x(s)).\end{equation}
\end{defin}
Given an interval $[\alpha,\beta]$  where $x$ is $\bar\delta$-close to $\partial\Omega$, we define
the following constant, which is a sort of measure of how much the curve $x\vert_{[\alpha,\beta]}$
fails to flatten along $\partial\Omega$:
\begin{defin}\label{thm:defflat}
The \emph{bending constant} of $x$ on $[\alpha,\beta]$ is defined by:
\begin{equation}\label{eq:bendconst}
\mathfrak b^x_{\alpha,\beta}=
\frac{\max\big\{
\Vert x(\beta)-\pi(x(\alpha))\Vert_{E},\Vert x(\alpha)-\pi(x(\beta))\Vert_{E}\big\}}{\Vert \pi(x(\alpha))-\pi(x(\beta))\Vert_{E}}\in\R^+\cup\{+\infty\},
\end{equation}
where $\pi$ denotes the projection onto $\partial\Omega$ along orthogonal geodesics.
\end{defin}

We observe that $\mathfrak b^x_{\alpha,\beta}=+\infty$ if and only if $x(\alpha)=x(\beta)$.\smallskip

Let $\bar\gamma$ be as in Lemma~\ref{thm:lem4.18}.
If the bending constant of a path $y\vert_{[\alpha,\beta]}$
is greater than or equal to $1+\bar\gamma$, then the energy functional in the interval $[\alpha,\beta]$ can be decreased in a
neighborhood of $y\vert_{[\alpha,\beta]}$ keeping the endpoints $y(\alpha)$ and $y(\beta)$
fixed, and moving away from $\partial \Omega$ (cf. \cite{esistenza}).

In order to prove this, we first need the following
\begin{defin}\label{def:summary-interval}
An interval $[\tilde\alpha,\tilde\beta]$ is called a \emph{summary interval} for $x \in \mathfrak M$ if it is the smallest interval contained in
$[a,b]\in\mathcal I_{x}^{0}$ and containing all the intervals $[\alpha,\beta]$ such that
\begin{itemize}
\item $x$ is $\bar\delta$--close to $\partial\Omega$ on $[\alpha,\beta]$,
\item $b^{x}_{\alpha,\beta} \geq 1 + \bar \gamma$.
\end{itemize}
\end{defin}
The following result is proved in \cite{esistenza}:
\begin{prop}\label{thm:prop4.20}
There exist positive constants $\sigma_0\in\left]0,{\bar\delta}/2\right[$,
$\varepsilon_0 \in\left]0,\bar \delta - 2\sigma_0\right[$,
$\rho_0,\,\theta_0$ and $\mu_0$ such that for all
$y\in\mathfrak M$, for all $[a,b]\in \mathcal I_y$
and for all $[\tilde\alpha,\tilde\beta]$ summary interval for $y$ containing an interval $[\alpha,\beta]$ such that :
\[
y \text{ is $\bar\delta$--close to $\partial\Omega$ on }[\alpha,\beta], \,
\mathfrak b^y_{\alpha,\beta}\ge1+\bar\gamma, \, \mathfrak p^y_{\alpha,\beta}\ge-2\sigma_0,
\]
there exists $V_y\in H_0^1\big([\tilde\alpha,\tilde\beta],\R^m\big)$ with the following property:

for all $z\in H^1([\tilde\alpha,\tilde\beta],\R^m)$ with $\Vert z-y\Vert_{\tilde \alpha, \tilde \beta}\le\rho_0$ it is:
\begin{enumerate}
\item\label{itm:teruno} $V_y(s)=0$ for all $s\in[\tilde\alpha,\tilde\beta]$ such that $\phi(z(s))\le-\bar\delta + \varepsilon_0$;
\item\label{itm:terdue}
$g\big(\nabla\phi(z(s)),V_y(s)\big)\le-\theta_0\Vert V_y\Vert_{\tilde\alpha,\tilde\beta}$, if $s\in[\tilde\alpha,\tilde\beta]$
and $\phi(z(s))\in[-2\sigma_0,2\sigma_0]$
\item\label{itm:tertre} $\int_{\tilde\alpha}^{\tilde\beta} g(\dot z,\Ddt V_y)\,\mathrm dt\le-\mu_0\Vert V_y\Vert_{\tilde\alpha,\tilde\beta}.$
\end{enumerate}
\end{prop}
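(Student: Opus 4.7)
The plan is a compactness/contradiction argument that exploits the quantitative gap between the bending bound $\mathfrak b^y_{\alpha,\beta}\ge 1+\bar\gamma$ assumed on $y$ and the bending bound $\mathfrak b\le 1+\bar\gamma/2$ which Lemma~\ref{thm:lem4.19} forces upon curves satisfying the ingoing variational inequality \eqref{eq:eq5.6}. The constants $\sigma_0,\varepsilon_0,\rho_0,\theta_0,\mu_0$ will be selected in that order, each small enough so that the construction below works uniformly for every $y$ satisfying the hypotheses.

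First I would construct $V_y$ as an inward-pushing field of the form
\[
V_y(s)\;=\;-\,\chi\bigl(\phi(y(s))\bigr)\,\frac{\nabla\phi(y(s))}{\Vert\nabla\phi(y(s))\Vert^{2}},\qquad s\in[\tilde\alpha,\tilde\beta],
\]
where $\chi\in C^\infty(\R,[0,1])$ is a cutoff with $\chi\equiv 0$ on $(-\infty,-\bar\delta+\varepsilon_0]$ and $\chi\equiv 1$ on $[-\bar\delta+2\varepsilon_0,+\infty)$. Since $\phi(y(\tilde\alpha))=\phi(y(\tilde\beta))=-\bar\delta$ by the definition of summary interval, $V_y\in H^1_0([\tilde\alpha,\tilde\beta],\R^{m})$. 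Property \eqref{itm:teruno} is immediate once $\rho_0$ is chosen small enough that $\vert\phi(z)-\phi(y)\vert<\varepsilon_0/2$ uniformly along the curve (using \eqref{eq:stima1.8} and the bound \eqref{eq:1.9fabio}). Property \eqref{itm:terdue} follows by choosing $\sigma_0<\varepsilon_0/2$ so that $\chi\equiv 1$ on the strip $\{\vert\phi\vert\le 2\sigma_0\}$: there $g(\nabla\phi(z),V_y)\approx -1$, and a positive $\theta_0$ can be read off from $\min_{\phi^{-1}([-\delta_0,\delta_0])}\Vert\nabla\phi\Vert$ and the $\rho_0$-closeness.

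Property \eqref{itm:tertre} is the heart of the matter and would be argued by contradiction. Suppose no uniform $\mu_0>0$ works. Then there exist sequences $y_n,z_n\in\mathfrak M$ with $\Vert z_n-y_n\Vert\to 0$, summary intervals $[\tilde\alpha_n,\tilde\beta_n]$ containing bumps $[\alpha_n,\beta_n]$ satisfying the three hypotheses of the proposition, and
\[
\int_{\tilde\alpha_n}^{\tilde\beta_n} g\bigl(\dot z_n,\Ddt V_{y_n}\bigr)\,\mathrm dt\;\bigg/\;\Vert V_{y_n}\Vert_{\tilde\alpha_n,\tilde\beta_n}\;\longrightarrow\;0.
\]
By Lemma~\ref{thm:lemmacazzatina} the lengths $\tilde\beta_n-\tilde\alpha_n$ are bounded below, and by compactness of $\overline\Omega$ together with the uniform energy bound in $\mathfrak M$, after reparameterizing to a common interval and extracting subsequences, one obtains weak $H^1$-limits $y_\infty=z_\infty$ on a limit summary interval. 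Continuity of the bending functional $\mathfrak b$ in the $H^1$-topology gives $\mathfrak b^{y_\infty}_{\alpha,\beta}\ge 1+\bar\gamma$ on the limiting bump. On the other hand, since each $V_{y_n}$ lies in $\mathcal V^-(y_n)$ and vanishes at the endpoints of the summary interval, the limit inequality $\int g(\dot y_\infty,\Ddt V)\ge 0$ survives for every $V\in\mathcal V^-(y_\infty)$ vanishing at the endpoints. Lemma~\ref{thm:lem4.19} then gives $\mathfrak b^{y_\infty}_{\alpha,\beta}\le 1+\bar\gamma/2$, the desired contradiction.

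The main obstacle is making the compactness step truly uniform across summary intervals of variable length: one has to check that both the maximal proximity $\mathfrak p^{\,\cdot}_{\alpha,\beta}$ and the bending $\mathfrak b^{\,\cdot}_{\alpha,\beta}$ depend continuously on the curve in the $H^1$-topology so that the limit retains the hypothesized bending bound, and that the normalization $\Vert V_{y_n}\Vert_{\tilde\alpha_n,\tilde\beta_n}$ does not degenerate (this is ensured by the lower bound on $\Vert\nabla\phi\Vert$ and the positive measure of the set where $\phi(y_n)\ge-\bar\delta+2\varepsilon_0$, itself guaranteed by $\mathfrak p^{y_n}_{\alpha,\beta}\ge-2\sigma_0$). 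A further delicacy is that $V_y$ is not itself in $\mathcal V^-(y)$ as a \emph{single} vector everywhere along $y$; one must argue via a density/approximation argument that the cone-valued limit inequality suffices to invoke Lemma~\ref{thm:lem4.19}.
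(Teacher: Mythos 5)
Your proposal has a genuine gap at its core: the explicit field $V_y=-\chi(\phi(y))\,\nabla\phi(y)/\Vert\nabla\phi(y)\Vert^{2}$ does not satisfy property \eqref{itm:tertre} in general, and no tuning of $\sigma_0,\varepsilon_0,\rho_0,\mu_0$ can repair this. Expanding the first variation along this field produces the term $-\int\chi(\phi(y))\,g\bigl(\dot y,\nabla_{\dot y}\bigl(\nabla\phi/\Vert\nabla\phi\Vert^{2}\bigr)\bigr)\,\mathrm dt$, whose tangential part is $-\int\chi\,\mathrm H^\phi(y)(\dot y^{T},\dot y^{T})/\Vert\nabla\phi\Vert^{2}\,\mathrm dt\ge 0$ by strong concavity: the concavity works \emph{against} the raw gradient field. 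Concretely, in the flat model where locally $\Omega=\{|x|>1\}$ and $\phi=1-|x|$, one has $V_y=\chi(1-r)\hat r$ along $y=re^{i\theta}$ and
\[
\int g(\dot y,\Ddt V_y)\,\mathrm dt=\int\bigl(-\chi'(1-r)\,\dot r^{2}+\chi(1-r)\,r\,\dot\theta^{2}\bigr)\,\mathrm dt .
\]
A hairpin that descends slowly from $\phi=-\bar\delta$ to the boundary, runs (or oscillates) tangentially along $\partial\Omega$ over a small net angular displacement $\epsilon_\theta$, and returns, satisfies all the hypotheses ($\mathfrak p=0$, $\mathfrak b\approx\bar\delta/\epsilon_\theta>1+\bar\gamma$, energy below $M_0$), yet the tangential term makes the first variation strictly positive while $\Vert V_y\Vert_{\tilde\alpha,\tilde\beta}$ stays bounded, so \eqref{itm:tertre} fails for every $\mu_0>0$. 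This is exactly why the statement only asserts existence of \emph{some} $V_y$ adapted to the hairpin geometry of $y$; note also that the paper itself does not prove this proposition but quotes it from \cite{esistenza}, where the construction of $V_y$ is substantially more delicate than an inward gradient push. (Minor, fixable points: with your thresholds, \eqref{itm:teruno} is required where $\phi(z(s))\le-\bar\delta+\varepsilon_0$, so the cutoff must vanish up to $-\bar\delta+\varepsilon_0$ plus the $\rho_0$-oscillation of $\phi$; and $V_y$ \emph{does} lie in $\mathcal V^-(y)$, contrary to your closing remark, since $g(V_y,\nabla\phi(y))=-\chi\le0$ everywhere.)

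The compactness step is also logically insufficient even granting a candidate field. Negating \eqref{itm:tertre} along your specific fields $V_{y_n}$ yields, after passing to the limit, only the single inequality $\int g(\dot y_\infty,\Ddt V_\infty)\,\mathrm dt\ge0$ for the limit of those fields, not the full variational inequality \eqref{eq:eq5.6} over the whole cone $\mathcal V^-(y_\infty)$ (with vanishing endpoints) that Lemma~\ref{thm:lem4.19} requires; the limit curve may perfectly well admit other ingoing fields that do decrease the energy, so no contradiction with the bending bound $1+\bar\gamma/2$ is reached. To run a contradiction of this type one must negate the existence of \emph{any} admissible field with a uniform rate (an infimum over the unit sphere of the admissible cone, with the support and sign constraints of \eqref{itm:teruno}--\eqref{itm:terdue}), and then prove that an arbitrary test field along $y_\infty$ can be approximated by admissible fields along $y_n$ so that \eqref{eq:eq5.6} really passes to the limit; neither step appears in your sketch, and the second is where the uniform constants $\rho_0,\theta_0,\mu_0$ actually get extracted.
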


\begin{rem}\label{rem:sigma1}
As observed in \cite{esistenza}, in order to define flows that move away from curves having topologically non-essential intervals (defined below), it will be necessary to fix a constant $\sigma_1 \in\left]0,\sigma_0\right[$
such that
\[
\sigma_1 \leq \frac27\rho_0\theta_0,
\]
where $\rho_0, \, \theta_0$ are given by Proposition  \ref{thm:prop4.20}.
\end{rem}
\medskip

Proposition \ref{thm:prop4.20} and Remark \ref{rem:sigma1} are  crucial ingredients for the
 definition of the class of the admissible homotopies, whose elements will avoid
irregular variationally critical
points of first type. The description of this class is based on the notion of topologically non-essential interval given below.
\medskip

Let $\bar\delta$ be as in \eqref{eq:4.37}, $\bar\gamma$ as in Lemma \ref{thm:lem4.18}
and $\sigma_1$ as in Remark \ref{rem:sigma1}.
\begin{defin}\label{def:4.21}
Let $y\in \mathfrak M$ be fixed. An interval $[\alpha,\beta]\subset[a,b]\in{\mathcal I}_{y}$, is called
\emph{topologically not essential interval (for $y$)} if  $y$ is $\bar\delta$-close to $\partial\Omega$ on
$[\alpha,\beta]$, with $\mathfrak p^y_{\alpha,\beta}\ge-\sigma_1$ and
$\mathfrak b^y_{\alpha,\beta}\ge(1+\tfrac32\bar\gamma)$.
\end{defin}

\begin{rem}\label{rem:4.22}
By Lemma \ref{thm:lem4.18} the intervals $[\alpha,\beta]$  containing cusp intervals
$[t_1,t_2]$ of curves $x$, which are
irregular variationally critical portion of first type, and satisfying $\Theta_x(t_1,t_2)\ge d_0$ are topologically not essential intervals with
$\mathfrak p_{\alpha,\beta}^x=0$
and $\mathfrak b_{\alpha,\beta}^x\ge 1+2\bar\gamma$.
This fact will allow us to  move away from the set of irregular variationally critical portions of first type
without increasing the value of the energy functional.
\end{rem}

\section{The admissible homotopies}\label{sec:homotopies}

In the present section
we shall list the properties of the admissible homotopies used
in our minimax argument. The notion of topological critical level
used in this paper,
depends on the choice of the admissible homotopies.

We shall consider continuous homotopies
$h:[0,1]\times\Dcal\to\mathfrak M$ where
$\mathcal D$ is a closed subset of $\mathfrak C$. It should be observed, however, that
 totally analogous definitions apply also to any element $h$ in $\mathfrak M$,
 not necessarily contained in $\mathfrak C$.

Recall that $\mathfrak C$ is described in \eqref{eq:2.6bis}. First of all, we require that:

\begin{equation}\label{eq:numero1}
h(0,\cdot) \text{ is the inclusion of } \Dcal
\text{ in }\mathfrak M.
\end{equation}

The homotopies that we shall use are of three types: outgoing homotopies, reparameterizazions and ingoing homotopies.
They can be described in the following way.

\begin{defin}\label{thm:tipoA}
Let $0 \leq \tau' < \tau'' \leq 1$. We say that $h$ is of type $A$
in $[\tau',\tau'']$ if it satisfies the following property:
\begin{enumerate}
\item\label{unico}
for all $\tau_0 \in [\tau',\tau'']$, for all $s_0 \in [0,1]$, for all $x \in \mathcal D$, if
$\phi(h(\tau_0,x)(s_0) = 0$, then
$\tau \mapsto \phi(h(\tau,x)(s_0)$ is strictly increasing in a neighborhood of $\tau_0$.
\end{enumerate}
\end{defin}

\begin{rem}\label{rem:monotonia-intervalli}
It is relevant to observe that, by property above of
Definition \ref{thm:tipoA}, if $[a_{\tau},b_{\tau}]$ denotes any
interval in $\mathcal I_{h(\tau,\gamma)}$ we have:
\begin{equation*}\label{eq:4.35f}
\tau' \le\tau_1<\tau_2\le\tau''
\text{ and }
[a_{\tau_1},b_{\tau_1}]\cap
[a_{\tau_2},b_{\tau_2}]\ne\emptyset \Longrightarrow
[a_{\tau_2},b_{\tau_2}] \subset
[a_{\tau_1},b_{\tau_1}].
\end{equation*}
\end{rem}


In the next Definition we describe the admissible homotopies consisting in suitable reparameterizations $\Lambda(\tau,\gamma)$. The deformation parameter $\tau$ moves in a fixed interval $[\tau',\tau'']$.

\begin{defin}\label{thm:tipoB}
Let $0 \leq \tau' < \tau'' \leq 1$. We say that $h$ is of type $B$
in $[\tau',\tau'']$ if it satisfies the following property:
there exists $\Lambda : [\tau',\tau''] \times \mathcal H_0^1([0,1],[0,1]) \rightarrow [0,1]$ continuous and such that
\begin{itemize}
\item $\Lambda(\tau,\gamma)(0)=0,\,\Lambda(\tau,\gamma)(1)=1, \, \forall \tau \in [\tau',\tau''],\,\forall \gamma \in \mathcal D$;

\item 
$s\mapsto\Lambda(\tau,\gamma)(s) \text{is strictly increasing in }[0,1], \; \forall \tau \in [\tau',\tau''],\forall \gamma \in \mathcal D$;

\item
$\Lambda(0,\gamma)(s) = s \text{ for any }\gamma \in \mathcal D, s \in [0,1]$;

\item
$h(\tau,\gamma)(s) = (\gamma \circ \Lambda(\tau,\gamma))(s) \;
\forall \tau \in [\tau',\tau''], \forall s \in [0,1], \forall \gamma
\in \mathcal D$.
\end{itemize}

\end{defin}

\begin{defin}\label{thm:tipoC}
Let $0 \leq \tau' < \tau'' \leq 1$. We say that $h$ is of type $C$
in $[\tau',\tau'']$ if it satisfies the following properties:
\begin{enumerate}
\item\label{eq:CI}
$h(\tau',\gamma)(s) \not \in \Omega \Rightarrow h(\tau,\gamma)(s) =
h(\tau',\gamma)(s)$ for any $\tau \in [\tau',\tau'']$;
\item\label{eq:CIbis}
$h(\tau',\gamma)(s) \in \Omega
\Rightarrow h(\tau,\gamma)(s) \in \Omega$
for any $\tau \in [\tau',\tau'']$;
\end{enumerate}
\end{defin}
\smallskip

The interval $[0,1]$ where $\tau$ varies will be partitioned in the
following way:
\begin{multline}\label{eq:partizione}
\text{There exists a partition of the interval } [0,1],\; 0=\tau_{0} < \tau_1 < \ldots <\tau_k =1 \text{ such that}\\
\text{ on any interval } [\tau_i, \tau_{i+1}], i=0,\ldots,k-1,
\text{ the homotopy $h$ is either of type A, or B, or C.}
\end{multline}

Homotopies of type A will be used away from variationally critical portions, homotopies of type B near variationally critical portions
of II type, while homotopies of type C will be used near variationally critical portions
of I type.\smallskip

Now, in order to move far from topologically non-essential
intervals (cf.\ Definition \ref{def:4.21}) we need the
following further property:

\begin{multline}\label{eq:numero8}
\text{if }[a,b]\in {\mathcal I}_{h(\tau,\gamma)}
\text{ then for all }
[\alpha,\beta]\subset[a,b] \text{ topologically non-essential it is }\\
\phi(h(\tau,\gamma)(s))\le-\frac{\sigma_1}2 \text {for all }s
\in[\alpha,\beta],
\end{multline}
where ${\sigma_1}$ is defined in Remark \ref{rem:sigma1}.
\bigskip

We finally define the following class of admissible homotopies:
\begin{multline}\label{eq:class0}
\mathcal H =\big \{(\mathcal D, h): \mathcal D \text{  is a closed subset of $\mathfrak C$ and } \\
h:[0,1] \times \mathcal D \rightarrow \mathfrak M \text{ satisfies
\eqref{eq:numero1}, \eqref{eq:partizione} and \eqref{eq:numero8}}\big\}.
\end{multline}

\begin{rem}\label{rem:6.6}
Obviously, it is crucial to have $\mathcal H \not= \emptyset$. But thanks to Lemma
\ref{thm:corde} we see that any $G(A,B)$ does not have topological non-essential intervals, and  denoting by
$I_{\mathfrak C}$ the constant identity homotopy we have $(\mathfrak C,I_{\mathfrak C}) \in
\mathcal H$.
\end{rem}

In order to introduce the functional for our minimax argument, we set
for any $(\mathcal D,h) \in \mathcal H$,
\begin{equation}\label{eq:funzionepreparatoria}
{\mathcal F}(\mathcal D,h) =
\sup\Big\{\tfrac{b-a}2\int_{a}^{b}g(\dot y,\dot y)\,\mathrm ds:
y=h(1,x), x\in{\mathcal D}, [a,b] \in {\mathcal I}_y\Big\}.
\end{equation}

\begin{rem}\label{rem:7.0}
It is interesting to observe that the integral $\tfrac{(b-a)}2\int_a^bg(\dot y,\dot y)\,\text dt$ coincides with  $\tfrac12\int_0^1g(\dot y_{a,b},\dot y_{a,b})\,
\mathrm dt$, where $y_{a,b}$ is the affine reparameterization of $y$ on the interval $[0,1]$.
\end{rem}

\begin{rem} Note also that, by the definition of $\mathcal H$, we have
\begin{equation}\label{eq:7.0bis}
\mathcal F(\mathcal D,h) < \frac{M_0}{2},\qquad\forall (\mathcal D,h)\in\mathcal H.
\end{equation}
\end{rem}

\bigskip

Given continuous maps $h_1:[0,1]\times F_1\to\mathfrak M$ and
$h_2:[0,1]\times F_2\to\mathfrak M$ such that $h_1(1,F_1)\subset
F_2$, then we define the \emph{concatenation} of $h_1$ and $h_2$ as
the continuous map $h_2\star h_1:[0,1]\times F_1\to\Lambda$ given by
\begin{equation}\label{eq:defconcatenationhomotop}
h_2\star h_1(t,x)=
\begin{cases}
h_1(2t,x),&\text{if\ } t\in[0,\tfrac12],\\
h_2(2t-1,h_1(1,x)),&\text{if\ } t\in[\tfrac12,1].
\end{cases}
\end{equation}
\section{Deformation Lemmas}\label{sec:second}

The first deformation result that we use is the analogous of the first (classical) deformation Lemma. By the same proof in \cite{esistenza} (without any use of symmetries properties on the flows) we obtain:

\begin{prop}[First Deformation Lemma]
\label{thm:firstdeflemma} Let $c \in\left ]0,M_0\right[$  be a geometrically  regular
value (cf. Definition \ref{thm:defgeomcrit}). Then, $c$ is a topologically regular value of $\mathcal F$, namely
there exists $\varepsilon=\varepsilon(c)>0$ with the following property:
for all  $(\mathcal D,h)\in {\mathcal H}$ with
\[
\mathcal F(\mathcal D,h)\leq c+\varepsilon
\]
there exists a
continuous map $\eta\in C^0\big([0,1]\times h(1,\Dcal),\mathfrak M\big)$ such that $(\Dcal,\eta\star h)\in{\Hcal}$ and
\[
\mathcal F(\Dcal,\eta\star h)\leq c-\varepsilon.
\]
\end{prop}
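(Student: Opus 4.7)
The plan is to argue by contradiction and compactness: if $c$ is geometrically regular, then no variationally critical portion with energy equal to $c$ can arise as a maximal interval of a curve $h(1,x)$ produced by an admissible homotopy, because by Propositions~\ref{thm:regcritpt} and \ref{thm:irregcritpt} regular critical portions are OGCs (ruled out by hypothesis), irregular critical portions of second type contain an OGC sub-arc of energy $\leq c$ (also ruled out once $\varepsilon$ is small), and irregular critical portions of first type are excluded from the image of any admissible homotopy by the topologically non-essential interval condition \eqref{eq:numero8}, courtesy of Remark~\ref{rem:4.22} and Lemma~\ref{thm:lem4.18}. Consequently, on the set $\mathcal{K}_{c,\varepsilon}:=\{y\in\mathfrak M:\exists\,[a,b]\in\mathcal I_y \text{ with } \tfrac{b-a}2\int_a^bg(\dot y,\dot y)\,\mathrm ds\in[c-\varepsilon,c+\varepsilon]\}\cap h(1,\mathcal D)$, no element can be variationally critical on any such interval.

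Next I would construct a pseudo-gradient vector field. By the compactness in Proposition~\ref{thm:Zrcompatto} applied to curves with nearly-critical energy, for a sufficiently small $\varepsilon>0$ there exist constants $\mu_0,\rho_0>0$ such that for every $y\in\mathcal K_{c,\varepsilon}$ and every relevant maximal interval $[a,b]$, one can find a vector field $V_y\in\mathcal V^+(y)$ with $\int_a^b g(\dot z,\tfrac{\mathrm D}{\mathrm dt}V_y)\,\mathrm ds\leq -\mu_0\|V_y\|$ uniformly for $z$ within $\rho_0$ of $y$. Patching these local vector fields together with a locally finite partition of unity on $h(1,\mathcal D)$, I obtain a continuous vector field $W$ on a neighborhood of $\mathcal K_{c,\varepsilon}$; outside this neighborhood I damp it to zero. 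Integrating $W$ up to a uniform time $T=T(\varepsilon)$ yields a continuous flow $\eta:[0,T]\times h(1,\mathcal D)\to\mathfrak M$, which after reparameterization gives the desired homotopy $\eta\in C^0([0,1]\times h(1,\mathcal D),\mathfrak M)$. The length integral estimate along the flow, combined with the semi-continuity of the maximal intervals in Remark~\ref{simple-sc} (to control how $[a,b]$ evolves with $\tau$), shows that after time $T$ one has $\mathcal F(\mathcal D,\eta\star h)\leq c-\varepsilon$ upon shrinking $\varepsilon$ if necessary.

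The final step is to verify that $(\mathcal D,\eta\star h)\in\mathcal H$. Property \eqref{eq:numero1} is inherited from $h$ because $\eta$ starts at the identity. For \eqref{eq:partizione}, the concatenation $\eta\star h$ inherits the original partition of $h$ on $[0,\tfrac12]$, and on $[\tfrac12,1]$ the homotopy $\eta$ is of type A: indeed the vector fields in $\mathcal V^+(y)$ push outward, so on every instant at which $\phi(h(\tau,x)(s))=0$ the map $\tau\mapsto \phi(\cdot)$ is strictly increasing, which is precisely Definition~\ref{thm:tipoA}. Property \eqref{eq:numero8} on topologically non-essential intervals is preserved because the outgoing flow only increases $\phi$ on portions near $\partial\Omega$: any such interval for $\eta(\tau,y)$ is an interval where the original $y$ was already uniformly separated from $\partial\Omega$ by $-\sigma_1/2$, and outgoing motion cannot produce a new topologically non-essential interval without crossing a level where $W$ was designed to push further outward.

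The main obstacle is the construction of the pseudo-gradient uniformly in $(\mathcal D,h)\in\mathcal H$ and in the choice of maximal interval $[a,b]\in\mathcal I_{h(1,x)}$, given that $\mathcal I_y$ is only semi-continuous in $y$ and the number and location of intervals may change. This will require a careful localization argument based on the Palais–Smale type compactness of Proposition~\ref{thm:Zrcompatto} applied simultaneously to all maximal intervals whose energy lies in $[c-\varepsilon,c+\varepsilon]$, together with a partition of unity that is compatible with the partition \eqref{eq:partizione} used in the admissibility class. The detailed verification is the content of the analogous deformation lemma in \cite{esistenza}, and the present argument adapts it by dropping the symmetry requirements that played a role there.
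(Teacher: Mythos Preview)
Your outline has a genuine gap in the treatment of irregular variationally critical portions of second type. These are, by Proposition~\ref{thm:irregcritpt}\eqref{itm:stop}, curves that are constant on $[a,\alpha]$ and $[\beta,b]$ and equal to an OGC on $[\alpha,\beta]$. You claim they are ``ruled out once $\varepsilon$ is small'' because the OGC sub-arc has energy $\le c$. But $c$ being geometrically regular only means no OGC has energy \emph{equal} to $c$; OGCs with energy strictly below $c$ may well exist, and for any such OGC $\gamma$ with energy $c'<c$ one can manufacture a second-type portion on $[a,b]$ with normalized energy $\tfrac{b-a}{2}\int_a^b g(\dot x,\dot x)\,\mathrm ds=\tfrac{b-a}{\beta-\alpha}\,c'$ equal to $c$ exactly. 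Such a portion is genuinely $\mathcal V^+$-variationally critical (that is the content of Definition~\ref{thm:defvariatcrit}), so no outgoing pseudo-gradient field in $\mathcal V^+$ can decrease the energy there, and your construction of $W$ breaks down on a neighborhood of these curves. The paper's remedy is to use reparameterization flows (type~B, Definition~\ref{thm:tipoB}) near second-type portions: shrinking the constant plateaus lowers $\tfrac{b-a}{2}\int_a^b$ without touching the image of the curve.

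There is a second, related problem with your claim that $\eta$ can be taken purely of type~A and that \eqref{eq:numero8} is automatically preserved. An outgoing flow \emph{increases} $\phi$ near $\partial\Omega$; this is the wrong sign for maintaining $\phi\le -\sigma_1/2$ on topologically non-essential intervals. While first-type critical portions themselves are excluded from $h(1,\mathcal D)$ by \eqref{eq:numero8} and Remark~\ref{rem:4.22}, curves \emph{close} to them are not, and flowing such curves outward can create exactly the forbidden configuration. The paper's strategy, sketched in the remark following the statement, is to first apply an \emph{ingoing} flow (type~C, built from Proposition~\ref{thm:prop4.20}) on summary intervals with large bending constant, pushing those pieces of curve deeper into $\Omega$ and away from non-essential behavior, before invoking the outgoing flow. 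Thus all three flow types A, B, C are genuinely needed, and the admissibility of $\eta\star h$ relies on the interplay between them rather than on type~A alone.
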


\begin{rem}
Let us recall here briefly the main idea behind the proof of Proposition~\ref{thm:firstdeflemma}, which is discussed in details in reference~\cite{esistenza}.
As in the classical Deformation Lemma, if $c$ is a regular value, then one shows there exist $\varepsilon > 0$ and a flow carrying the sublevel $c+\varepsilon$ inside the sublevel $c-\varepsilon$. The technical issue here is the fact  that we need flows
under which pieces of curves which are outside $\overline \Omega$ remain outside of $\overline \Omega$.
This is obtained as follows.
Using suitable pseudo-gradient vector fields, we first move away form curves having topologically non-essential intervals. Near irregular variational critical portions of second type, the desired flow is obtained by using reparameterizations, as described in Definition \ref{thm:tipoB}. Finally, we use flows described in Definition \ref{thm:tipoA} in order to move outside $\overline \Omega$ when we are far form variational critical portions of any type. A suitable partition of unity argument, needed to combine these different flows, allows to define the required homotopy that carries
the sublevel $c+\varepsilon$ into  the sublevel $c-\varepsilon$ if there are no OGC's having energy $c$.
\end{rem}

\medskip

We shall find positive geometrical critical level using the following Lemma, which is a simple consequence of Lemma \ref{thm:lemmacazzatina}.
\begin{lem}\label{lem:topological}
Suppose that
\[
\mathcal F(\mathcal D,h) < \frac12\left(\frac{3\delta_0}{4K_0}\right)^2.
\]
Then there exists an homotpy $\eta$ such that $(\eta \star h)(1,\gamma)(s)\in \partial \Omega$ for all $\gamma \in \mathcal D$, for any $s \in [0,1]$.
\end{lem}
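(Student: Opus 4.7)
The plan is to use the quantitative bound \eqref{eq:aggiuntalemma2.1} from Lemma~\ref{thm:lemmacazzatina} to show that, under the hypothesis on $\mathcal F(\mathcal D,h)$, every curve $y = h(1,\gamma)$ with $\gamma\in\mathcal D$ is already confined to the strip $\phi^{-1}\big([-\delta_0,\delta_0]\big)$ where $\nabla\phi\ne 0$. Once this is established, a straightforward retraction along the normalized gradient flow provides the required homotopy to $\partial\Omega$.

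First, I would fix $\gamma\in\mathcal D$ and $y=h(1,\gamma)\in\mathfrak M$. For each $[a,b]\in\mathcal I_y$, one has $y(a)\in\partial\Omega$, hence $\phi(y(a))=0$, so \eqref{eq:aggiuntalemma2.1} combined with the assumed energy bound gives
\[
\sup_{s\in[a,b]}\bigl|\phi(y(s))\bigr| \le \sqrt 2\,K_0\left(\frac{b-a}{2}\int_a^b g(\dot y,\dot y)\,\mathrm d\sigma\right)^{1/2} < \sqrt 2\,K_0\cdot\frac1{\sqrt 2}\cdot\frac{3\delta_0}{4K_0}=\frac{3\delta_0}{4}.
\]
Outside $\bigcup\mathcal I_y$ the curve lies outside $\overline\Omega$, so $\phi(y(s))\ge 0$ there, and in all cases $\phi(y(s))<\delta_0$ by the definition of $\mathfrak M_0$. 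Therefore $y\bigl([0,1]\bigr)\subset\phi^{-1}\bigl(\left]-\tfrac{3\delta_0}{4},\delta_0\right[\bigr)\subset\phi^{-1}\bigl([-\delta_0,\delta_0]\bigr)$.

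Next, I would construct a canonical retraction onto $\partial\Omega$ from this strip. Consider the flow $\xi(\tau,p)$ of the vector field $\nabla\phi/\Vert\nabla\phi\Vert^2$, which is precisely $\eta^+$ from \eqref{eq:flusso+} when $\phi(p)<0$ and coincides with the reverse of $\eta^-$ when $\phi(p)>0$. Since $\dfrac{\mathrm d}{\mathrm d\tau}\phi(\xi(\tau,p)) = 1$, one has $\phi(\xi(\tau,p))=\phi(p)+\tau$. Set
\[
\Pi(\tau,p)=\xi\bigl(-\tau\,\phi(p),\,p\bigr),\qquad (\tau,p)\in[0,1]\times\phi^{-1}([-\delta_0,\delta_0]),
\]
so that $\Pi(0,p)=p$, $\phi(\Pi(\tau,p))=(1-\tau)\phi(p)$, and $\Pi(1,p)\in\partial\Omega$. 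By Remark~\ref{rem:flusso-well-defined} the map $\Pi$ is well-defined and continuous (in fact $C^1$ in $p$).

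Finally, I would define $\eta:[0,1]\times h(1,\mathcal D)\to\mathfrak M_0$ by $\eta(\tau,y)(s)=\Pi(\tau,y(s))$. This is meaningful because $y([0,1])$ is contained in the strip above; it is continuous in $(\tau,y,s)$ and preserves the $H^1$-regularity since $\Pi(\tau,\cdot)$ is $C^1$. Moreover, $\phi(\eta(\tau,y)(0))=(1-\tau)\phi(y(0))\ge 0$ and analogously at $s=1$, so $\eta(\tau,y)\in\mathfrak M_0$ throughout the deformation. By construction $\eta(0,y)=y$ and $\eta(1,y)(s)\in\partial\Omega$ for every $s\in[0,1]$, hence $(\eta\star h)(1,\gamma)=\eta(1,h(1,\gamma))$ takes values in $\partial\Omega$ for every $\gamma\in\mathcal D$. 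There is no substantial obstacle: the argument is a quantitative comparison between energy and depth, followed by the standard observation that in a collar neighborhood of a regular level set of $\phi$ one can straighten the flow to a retraction onto the level set.
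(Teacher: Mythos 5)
Your argument is correct and is exactly the proof the paper has in mind (the paper omits it, calling the lemma ``a simple consequence of Lemma~\ref{thm:lemmacazzatina}''): the bound \eqref{eq:aggiuntalemma2.1} applied on each interval of $\mathcal I_y$ confines $h(1,\gamma)$ to the collar $\phi^{-1}\big(\left]-\tfrac{3\delta_0}{4},\delta_0\right[\big)$, and the normalized gradient flow of $\phi$ (the flows \eqref{eq:flusso+}--\eqref{eq:flusso-}) retracts this collar onto $\partial\Omega$, fixing curves already lying in $\partial\Omega$ as needed for the later application. The computation $\sqrt2\,K_0\cdot\tfrac1{\sqrt2}\cdot\tfrac{3\delta_0}{4K_0}=\tfrac{3\delta_0}{4}<\delta_0$ and the observation that every point with $\phi<0$ lies in some interval of $\mathcal I_y$ are precisely the points that make the lemma work.
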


In order to obtain an analogue of the classical Second Deformation Lemma, we first need to describe neighborhoods of critical curves that must be removed in order to make the functional $\mathcal F$ decrease.
We shall assume that the number of OGC's is finite; obviously such an assumption is not restrictive.

For every
$[a,b] \subset [0,1]$ and $\omega$ OGC parameterized in the interval $[0,1]$, we denote by $\omega_{a,b}$ the OGC $\omega$ affinely
reparameterized on the interval $[a,b]$. We shall consider only intervals $[a,b]$ such that
\begin{equation}\label{eq:limitazione-ab}
\int_a^bg(\dot \omega_{a,b},\dot \omega_{a,b})\mathrm ds \leq M_0.
\end{equation}
\bigskip

Since we are assuming that the number of OGC's is finite we can choose a positive $r_*$ sufficiently small so that
\begin{multline}\label{eq:rstar1}
\Vert \omega^{1}_{a,b} - \omega^{2}_{a,b}\Vert_{a,b} > 2r_{*}, \text{ for any } [a,b] \subset [0,1] \text{ satisfying \eqref{eq:limitazione-ab}}, \\
\text{ for any
$\omega^{1},\omega^{2}$ OGC's parameterized in $[0,1]$}
\text{ and such that $\omega^{1} \neq \omega^{2}$.}
\end{multline}

Note that \eqref{eq:rstar1} holds even if $\omega^{2}(s) =
\omega^{1}(1-s)$, because  $\omega_1$ is not constant.
Moreover, since for any OGC $\omega$ it is $\omega(0)\ne\omega(1)$ (by uniqueness in the geodesic Cauchy problem),
if $r_*$ is sufficiently small we have:
\begin{multline}\label{eq:rstar2}
\text{for any OGC $\omega$ parameterized in $[0,1]$,} \\
\{y \in \partial \Omega: \dist_{E}(y,\omega(0))\leq r_* \} \cap  \{y \in \partial \Omega: \dist_{E}(y,\omega(1))\leq r_* \} = \emptyset.
\end{multline}
(Recall that $\dist_{E}$ denotes the Euclidean distance in $\mathbb{R}^m$.)

Also note that $r_*$ can be chosen so small that
 
\begin{multline}\label{eq:rstar3}
\text{for any OGC
$\omega$,  the sets} \\
\text{ $\{\pi
(y)\,:\,\dist_{E}(y,\omega(0))<2r_*\}$ and
$\{\pi(y)\,:\,\dist_{E}(y,\omega(1))<2r_*\}$} \\
\text{are contractible in $\partial\Omega$},
\end{multline}
where $\pi:\phi^{-1}\big([-\delta_0,0]\big)\longrightarrow\phi^{-1}(0)$ is the
retraction onto $\partial\Omega$ obtained by the gradient flow for $\phi$.
\bigskip

For any $(\mathcal D,h) \in \mathcal H$,
and  $\omega$ orthogonal geodesic chord parameterized in $[0,1]$,
we set, for any $r \in ]0,r_*]$,
\begin{multline}\label{eq:9.1}
\Ucal(\mathcal D,h,\omega,r)=\big\{x = h(1,y): y \in \mathcal D
\text{ and there exists }[a,b] \in \mathcal I_x \text{ such that }\\
\Vert x\vert_{[a,b]} - \omega_{a,b} \Vert_{a,b} \leq r \big\},
\end{multline}

If $[a,b]$ satisfies the above property we say that $x_{[a,b]}$ is $r_*$--close to $\omega_{a,b}$.
Note that $\Ucal(\mathcal D,h,\omega,r)$ is closed in $\mathfrak M$ and we have
\begin{multline}\label{eq:9.2}
{\Ucal(\mathcal D,h,\omega_1,r_*)}\cap {\Ucal(\mathcal D,h,\omega_2,r_*)}=\emptyset,\quad \forall\,(\mathcal D,h)\in\mathcal H,\\
\forall\,\omega_1,\omega_2\text{\
OGC's parameterized in $[0,1]$}
\text{ and such that }
\omega_1 \neq \omega_2.
\end{multline}
Now  if $c > 0$ is a geometrically critical we set
\[
E_c = \{\omega \text{ OGC}: \int_0^1 g(\dot \omega,\dot \omega)ds = c \}
\]
and, for any $r \in ]0,r_*]$
\[
\Ucal_{r}(\mathcal D,h,c)=\bigcup_{\omega \in E_c} \Ucal(\mathcal D, h,\omega,r).
\]

\begin{rem}\label{thm:chiusura}
Fix $\varepsilon > 0$ so that $c-\epsilon > 0$ and consider
\begin{multline}\label{eq:Ac}
{\mathcal A}_{c,\varepsilon} = \{y \in \mathcal D: x = h(1,y) \in \Ucal_{r_*}(\mathcal D,h,c), \text{ and there exists $[a,b] \in \mathcal I_x$ such that }\\
x\vert_{[a,b]} \text{ is $r_*$--close to }\omega_{a,b} \text{ and }\frac{b-a}2 \int_a^bg(\dot x,\dot x)\,\mathrm ds \in [c-\varepsilon,c+\varepsilon]\}.
\end{multline}
\end{rem}

Again, by the same proof in \cite{esistenza}, we obtain the following
\begin{prop}[Second Deformation Lemma]\label{thm:prop9.3} Let $c \geq \frac12\big(\frac{3\delta_0}{4K_0}\big)^2$ be a geometrical critical value.
Then, there exists $\varepsilon_*=\varepsilon_*(c) > 0$ such that, for
all $(\mathcal D,h)\in\Hcal$ with
\[
\Fcal(\Dcal,h)\leq c+\varepsilon_*
\]
there exists a continuous map $\eta:[0,1]\times h(1,\Dcal)\to\mathfrak M$ such that $(\eta\star
h,\mathcal D) \in\Hcal$ and
\[
\Fcal\big(\mathcal D\setminus {{\mathcal A}_{c,\varepsilon_*},\eta\star h} \big)
\leq c-\varepsilon_*.
\]
\end{prop}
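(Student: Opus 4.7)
The plan is to follow the scheme of the First Deformation Lemma (Proposition~\ref{thm:firstdeflemma}), but with an additional excision step that removes $\mathcal A_{c,\varepsilon_*}$ before deforming. Fix $c$ as in the statement. Under the assumption that there are only finitely many OGC's (so $E_c = \{\omega_1,\ldots,\omega_N\}$ is finite), and with $r_*$ chosen to satisfy \eqref{eq:rstar1}--\eqref{eq:rstar3}, I would first prove a quantitative separation: there exists $\varepsilon_* \in (0, r_*)$ small enough that for every $(\mathcal D, h) \in \mathcal H$ with $\mathcal F(\mathcal D, h) \leq c + \varepsilon_*$ and every $y \in \mathcal D \setminus \mathcal A_{c,\varepsilon_*}$, each interval $[a,b] \in \mathcal I_{h(1,y)}$ whose restricted energy $\tfrac{b-a}{2}\int_a^b g(\dot x,\dot x)\,\mathrm ds$ lies in $[c-\varepsilon_*, c+\varepsilon_*]$ has $h(1,y)\vert_{[a,b]}$ at $\Vert\cdot\Vert_{a,b}$--distance at least some $r_0 > 0$ (independent of $y$) from every variationally critical portion at energy in $[c-\varepsilon_*, c+\varepsilon_*]$.

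This separation is the heart of the argument, and I would prove it by contradiction using Proposition~\ref{thm:Zrcompatto}: if it fails, pick $y_n \notin \mathcal A_{c,\varepsilon_n}$ with $\varepsilon_n \downarrow 0$ and intervals $[a_n,b_n] \in \mathcal I_{x_n}$, $x_n = h_n(1,y_n)$, witnessing the failure. By Lemma~\ref{thm:lemmacazzatina}, $b_n - a_n$ is uniformly bounded below, so (after extraction) the portions $x_n\vert_{[a_n,b_n]}$ converge in $H^1$ to some variationally critical portion $x\vert_{[a,b]}$ at energy $c$. Since there are no WOGC's, Propositions~\ref{thm:regcritpt} and \ref{thm:irregcritpt} together with Remark~\ref{rem:rem4.12} imply the limit is (a reparameterization of) an OGC $\omega \in E_c$. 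But then $x_n\vert_{[a_n,b_n]}$ would eventually be $r_*$-close to $\omega_{a_n,b_n}$, forcing $y_n \in \mathcal A_{c,\varepsilon_n}$ for large $n$, a contradiction.

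Given this separation, I would then construct the deformation $\eta$ restricted to $\mathcal D \setminus \mathcal A_{c,\varepsilon_*}$ by the same three-stage construction used in \cite{esistenza} for the First Deformation Lemma: homotopies of type C (Definition~\ref{thm:tipoC}) to keep curves away from topologically non-essential intervals (preserving \eqref{eq:numero8}), homotopies of type B (Definition~\ref{thm:tipoB}) to absorb reparameterization issues near irregular critical portions of second type, and homotopies of type A (Definition~\ref{thm:tipoA}) built from a pseudo-gradient vector field for $\mathcal V^+$ that produces a uniform outward push. The three flows are combined by a partition of unity in $\tau$, giving a partition \eqref{eq:partizione}, and then concatenated with $h$ via \eqref{eq:defconcatenationhomotop}. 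The uniform separation $r_0$ from Step~1 gives a uniform lower bound on the decrease rate of the energy integral along the type-A pieces, so after shrinking $\varepsilon_*$ further we obtain $\mathcal F(\mathcal D \setminus \mathcal A_{c,\varepsilon_*}, \eta \star h) \leq c - \varepsilon_*$.

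The main obstacle is the compactness/separation step, which must be uniform over all admissible pairs $(\mathcal D, h)$ with $\mathcal F(\mathcal D, h) \leq c + \varepsilon_*$; it relies on Remark~\ref{simple-sc} and Remark~\ref{rem:rem4.12} to ensure the limit interval $[a,b]$ is nondegenerate and the limit portion is an OGC, and on Proposition~\ref{thm:distinct} (implicitly) to guarantee that distinct OGC's at level $c$ have disjoint $r_*$-neighborhoods. A secondary technical point is verifying that the concatenated homotopy $\eta \star h$ genuinely belongs to $\mathcal H$: the condition \eqref{eq:numero8} is the most delicate to preserve and is the reason for the type-C component, whose design uses Proposition~\ref{thm:prop4.20} with the constant $\sigma_1$ from Remark~\ref{rem:sigma1}. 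Once these points are in place, the remainder of the argument is a direct adaptation of the corresponding proof in \cite{esistenza}, with the noticeable simplification that no symmetry/quotient considerations appear here.
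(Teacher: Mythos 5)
Your overall plan (excise $\mathcal A_{c,\varepsilon_*}$ and then rerun the type A/B/C pseudo-gradient construction of the First Deformation Lemma) is indeed the route the paper intends, since the paper itself gives no separate argument and simply invokes the proof in \cite{esistenza}. The genuine gap is in your pivotal separation step. You claim that, after the excision, every portion with energy in $[c-\varepsilon_*,c+\varepsilon_*]$ stays at a uniform $H^1$-distance $r_0$ from \emph{every} variationally critical portion at nearby energy, and you obtain this by arguing that any $H^1$-limit of such portions, being variationally critical, ``must be an OGC since there are no WOGC's''. That inference is not available: Proposition~\ref{thm:irregcritpt} allows, even with no WOGC's, irregular critical portions of first type (with cusp intervals) and of second type (an OGC preceded and followed by constant pieces on $\partial\Omega$), and neither kind is removed by excising $\mathcal A_{c,\varepsilon_*}$. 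The sets in \eqref{eq:9.1} only collect curves $H^1$-close to the \emph{affinely reparameterized} chords $\omega_{a,b}$; a second-type portion, equal to $\omega$ affinely parameterized on $[\alpha,\beta]\subsetneq[a,b]$ and constant elsewhere, is not close to $\omega_{a,b}$ in the $\Vert\cdot\Vert_{a,b}$ norm, while its portion-energy $\tfrac{b-a}2\int_a^b g(\dot x,\dot x)\,\mathrm ds=c\,\tfrac{b-a}{\beta-\alpha}$ can perfectly well lie in $[c-\varepsilon_*,c+\varepsilon_*]$ when the constant pieces are short. Hence curves of $\mathcal D\setminus\mathcal A_{c,\varepsilon_*}$ can be arbitrarily $H^1$-close to variationally critical portions, the uniform $r_0$ you postulate does not exist, and Remark~\ref{rem:rem4.12} (which concerns sequences of OGC's, not of general critical portions) does not repair the contradiction argument.

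What closes these cases in the intended proof is different machinery, which your outline mentions but does not connect to the separation step: proximity to first-type portions is excluded not by the excision but by the admissibility condition \eqref{eq:numero8}, via Remark~\ref{rem:4.22} and Lemmas~\ref{thm:lem4.18}--\ref{thm:lem4.19} (a curve near a cusp with $\Theta\ge d_0$ carries a topologically non-essential interval on which $\phi>-\sigma_1/2$); proximity to second-type portions needs no separation at all, because there the energy is decreased by the type-B reparameterization homotopies; only proximity to genuine affinely parameterized OGC's at level $c$ is what $\mathcal A_{c,\varepsilon_*}$ removes, and that is the only place where your compactness argument via Proposition~\ref{thm:Zrcompatto} is legitimately used. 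The separation statement should therefore be formulated (and proved) only for this last class. A secondary point: the statement requires $\eta$ to be defined on all of $[0,1]\times h(1,\mathcal D)$ with $(\mathcal D,\eta\star h)\in\mathcal H$, the restriction to $\mathcal D\setminus\mathcal A_{c,\varepsilon_*}$ appearing only in the energy estimate; constructing $\eta$ ``restricted to $\mathcal D\setminus\mathcal A_{c,\varepsilon_*}$'', as you propose, does not match the statement, although the standard fix (a globally defined admissible flow that simply fails to push the excised curves below $c-\varepsilon_*$) is routine.
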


Then, to conclude the proof of Theorem \ref{thm:main} by minimax arguments we need just the following topological
results.

\begin{prop}\label{thm:prop9.4} Suppose that is only one orthogonal geodesic chord and let $\varepsilon_*$ given by Proposition \ref{thm:prop9.3}. Then, there exists  $\varepsilon \in ]0,\varepsilon_*]$  such that the set  ${\mathcal A}_{c,\varepsilon}$ given in \eqref{eq:Ac} satisfies the following property: there exist an open subset $\widehat {{\mathcal A}_{c,\varepsilon}}\subset\mathfrak C$ containing
${{\mathcal A}_{c,\varepsilon}}$
and a continuous map $h_{c,\varepsilon}:[0,1]\times \widehat {{\mathcal A}_{c,\varepsilon}} \to\mathfrak C$   such that
\begin{enumerate}

\item\label{itm:prop9.4-1} $h_{c,\varepsilon_*}(0,y)=y$, for all  $y\in \widehat {{\mathcal A}_{c,\varepsilon}}$;
\item\label{itm:prop9.4-4} $h_{c,\varepsilon}(1,\widehat {{\mathcal A}_{c,\varepsilon}})=\{y_0\}$ for some $y_0\in\mathfrak C$.
\end{enumerate}
\end{prop}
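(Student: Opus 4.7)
The plan is to exploit the uniqueness of the OGC $\omega$ to construct a contractible open neighborhood of $\mathcal A_{c,\varepsilon}$ in $\mathfrak C\cong \mathbb S^{m-1}\times\mathbb S^{m-1}$, built out of the neighborhoods of the two lifts $(\omega(0),\omega(1))$ and $(\omega(1),\omega(0))$ of $\omega$ in $\partial\Omega\times\partial\Omega$ provided by conditions \eqref{eq:rstar1}--\eqref{eq:rstar3}.

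The first step is the \emph{localization claim}: for $\varepsilon>0$ sufficiently small, every $y=G(A,B)\in\mathcal A_{c,\varepsilon}$ satisfies $(A,B)\in V_1\cup V_2$, where $V_1, V_2$ are small neighborhoods of $(\omega(0),\omega(1))$ and $(\omega(1),\omega(0))$ in $\partial\Omega\times\partial\Omega$. I would argue this by contradiction and compactness. Assume the claim fails; pick sequences $\varepsilon_n\to 0$ and $y_n=G(A_n,B_n)\in\mathcal A_{c,\varepsilon_n}$ with $(A_n,B_n)$ uniformly separated from $\{(\omega(0),\omega(1)),(\omega(1),\omega(0))\}$. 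Compactness of $\mathfrak C$ yields a subsequential limit $y_\infty=G(A_\infty,B_\infty)$ preserving the separation. Continuity of the fixed homotopy $h$ gives $h(1,y_n)\to h(1,y_\infty)$ in $H^1$, and Lemma~\ref{thm:lemmacazzatina} bounds the lengths of the witnessing intervals $[a_n,b_n]\in\mathcal I_{h(1,y_n)}$ uniformly away from zero, so up to extracting further $[a_n,b_n]\to[a,b]$ with $b>a$. Passing to the limit in the closeness estimate and the energy estimate, $h(1,y_\infty)\vert_{[a,b]}$ is $r_*$-close to $\omega_{a,b}$ with energy exactly $c$. One then deduces, using the admissible-homotopy structure in $\mathcal H$ (type B preserves endpoints, type C keeps points outside $\Omega$ fixed, type A only pushes $\phi$ upward) together with the topological characterization of $G(A,B)$ from Lemma~\ref{thm:corde} and semicontinuity of maximal intervals (Remark~\ref{simple-sc}), that $(A_\infty,B_\infty)$ must coincide with $(\omega(0),\omega(1))$ or $(\omega(1),\omega(0))$, contradicting the assumption.

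Granted the localization claim, choose $r\in(0,r_*]$ so that $N_r(\omega(0))\cap N_r(\omega(1))=\emptyset$ (by \eqref{eq:rstar2}) and each $N_r(\omega(i))\subset\partial\Omega$ is contractible (by \eqref{eq:rstar3}), with $N_r(p)=\{q\in\partial\Omega:\dist_E(q,p)<r\}$. Set $V_1=N_r(\omega(0))\times N_r(\omega(1))$, $V_2=N_r(\omega(1))\times N_r(\omega(0))$, and $\widehat{\mathcal A_{c,\varepsilon}}=G(V_1\cup V_2)\subset\mathfrak C$, which is open and the disjoint union of two pieces, each of which is a product of two contractible subsets of $\partial\Omega$. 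Pick $\varepsilon\in(0,\varepsilon_*]$ small enough that $\mathcal A_{c,\varepsilon}\subset\widehat{\mathcal A_{c,\varepsilon}}$ by the localization claim. To define $h_{c,\varepsilon}$, first contract each component $G(V_i)$ to its base point $G(\omega(0),\omega(1))$ or $G(\omega(1),\omega(0))$ via a straight-line contraction on the factors of $\partial\Omega\times\partial\Omega$, and then concatenate with a fixed continuous path in $\mathfrak C$ (which exists because $\mathfrak C$ is path-connected) joining $G(\omega(0),\omega(1))$ to $G(\omega(1),\omega(0))$, bringing both components to a common target $y_0\in\mathfrak C$. The result is the desired continuous homotopy with $h_{c,\varepsilon}(0,\cdot)=\mathrm{id}$ and $h_{c,\varepsilon}(1,\cdot)\equiv y_0$.

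The main obstacle is the localization claim, specifically the step of deducing that the endpoints of the limit curve $y_\infty$ coincide with those of $\omega$. The difficulty is that type-A homotopies can displace $A=y(0)$ and $B=y(1)$ off $\partial\Omega$, so the endpoints of the critical subinterval $[a,b]$ of $h(1,y_\infty)$ need not agree with $A_\infty,B_\infty$ a priori. One must carefully trace how the partition \eqref{eq:partizione} transforms the initial interval $[0,1]\in\mathcal I_{y_\infty}$ (when $A_\infty\neq B_\infty$) into $[a,b]$, and exploit the topological restriction in Lemma~\ref{thm:corde} together with the non-essential-interval constraint \eqref{eq:numero8} to recover $A_\infty,B_\infty$ from the limit data. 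Once this is achieved, the remaining construction of $\widehat{\mathcal A_{c,\varepsilon}}$ and $h_{c,\varepsilon}$ is a direct topological argument using contractibility of small balls on $\partial\Omega$.
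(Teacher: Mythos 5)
Your construction stands or falls on the ``localization claim'', and that claim is exactly where the argument breaks down. Membership of $y=G(A,B)$ in $\mathcal A_{c,\varepsilon}$ (see \eqref{eq:Ac}) is a condition on the \emph{deformed} curve $x=h(1,y)$, not on $y$ itself, and nothing in the class $\mathcal H$ relates the endpoints $(A,B)=(y(0),y(1))$ to the location of the critical portion of $x$. In a type-A piece the only constraint (Definition \ref{thm:tipoA}) is that a point sitting on $\partial\Omega$ must have $\phi$ strictly increasing in $\tau$; interior points may be dragged anywhere inside $\Omega$ before being pushed out, type B merely reparameterizes, type C freezes the points outside $\Omega$, and \eqref{eq:numero8} controls proximity to $\partial\Omega$ along the curve, not the endpoints. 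Hence a curve $G(A,B)$ with $(A,B)$ far from both $(\omega(0),\omega(1))$ and $(\omega(1),\omega(0))$ can perfectly well be carried by an admissible $h$ with $\mathcal F(\mathcal D,h)\le c+\varepsilon$ to a curve whose maximal interval in $\overline\Omega$ is $r_*$-close to $\omega_{a,b}$ with energy in $[c-\varepsilon,c+\varepsilon]$. Your compactness argument does produce a limit curve $h(1,y_\infty)$ with a portion $r_*$-close to $\omega$ of energy $c$, but the final deduction that $(A_\infty,B_\infty)$ must equal $(\omega(0),\omega(1))$ or $(\omega(1),\omega(0))$ — which you yourself flag as the main obstacle — has no basis: neither Lemma \ref{thm:corde}, nor Remark \ref{simple-sc}, nor the structure \eqref{eq:partizione} ties $y(0),y(1)$ to $\omega(0),\omega(1)$. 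Consequently $\widehat{\mathcal A_{c,\varepsilon}}=G(V_1\cup V_2)$ need not contain $\mathcal A_{c,\varepsilon}$ for any $\varepsilon>0$, and the rest of the construction (which is otherwise a correct contraction of $G(V_1\cup V_2)$) does not apply to the set in question.

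The paper avoids endpoint localization altogether. It works with the curves $x=h(1,y)$ themselves: it splits $\mathcal A_{c,\varepsilon}$ into finitely many closed pieces according to the number of intervals on which $x$ is $r_*$-close to $\omega_{a,b}$, uses the transversality of the unique OGC (as in \cite{arma}) to build continuous maps $\alpha(x)<\beta(x)$ isolating the portion of $x$ near $\omega$, retracts $x\vert_{[0,\alpha(x)]}$ and $x\vert_{[\beta(x),1]}$ onto $\partial\Omega$ — possible precisely because $\omega$ is the only OGC, so those outer portions are subcritical, as in Proposition \ref{thm:firstdeflemma} — and then slides the endpoints along the curve to obtain a contraction of $\mathcal A_{c,\varepsilon}$; the open set $\widehat{\mathcal A_{c,\varepsilon}}$ is produced afterwards by extending this homotopy using that $\mathfrak C$ is an ANR. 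If you wish to salvage your route, you would first have to prove some control of the endpoints of $y$ under admissible homotopies, and the framework deliberately provides none.
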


\begin{proof}[Proof of Proposition \ref{thm:prop9.4}]
 By the Second Deformation Lemma, we deduce the existence of $\epsilon$ such that $\mathcal A_{c,\varepsilon}$ consists of the disjoint union of a finite number of closed sets $C_i$ consisting of curves $x$ with the same number of intervals $[a,b]\in \mathcal I_x$ such that $x_{[a,b]}$ is $r_*$--close to $\omega_{a,b}$.

On any $C_i$, arguing as in \cite{arma}, thanks to the transversality properties of OGC's, we can construct continuous maps $\alpha(x)$ and $\beta(x)$ having the following properties:
\begin{itemize}
\item $\alpha(x) < \beta(x)$,
\item $\dist_{E}(x(\alpha(x)),\omega(0))<2r_*$ or $\dist_{E}(x(\alpha(x)),\omega(1))<2r_*$,
\item $\dist_{E}(x(\beta(x)),\omega(0))<2r_*$ or $\dist_{E}(x(\beta(x)),\omega(1))<2r_*$,
\item if $[a,b] \in \mathcal I_x$ is such that $b \leq \alpha(x)$ or $a \geq \beta(x)$ then $x_{[a,b]}$ is not close to $\omega_{a,b}$.
\end{itemize}
Then, as in the First Deformation Lemma, since $\omega$ is the unique OGC, we see that we can continuously
retract any $x\vert_{[0,\alpha(x)}]$ and $x\vert_{[\beta(x),1]}$ on $\partial \Omega$. Then moving $x(0)$ along $x$ until we reach $x(\alpha(x))$ and
$x(1)$ along $x$ until we reach $x(\beta(x))$ we obtain the searched homotopy on ${\mathcal A}_{c,\varepsilon}$
Finally
Since $\mathfrak C$ is an ANR (\emph{absolute neighborhood retract},
cf.\ \cite{Palais}),
we can immediately extend the obtained homotopy
to a suitable open set $\widehat{{\mathcal A}_{c,\varepsilon}}$, containing ${{\mathcal A}_{c,\varepsilon}}$
and satisfying the required properties.
\end{proof}

\section{Proof of the main Theorem \ref{thm:main}}

The topological invariant that will be employed in the minimax argument is the relative category $\cat$ defined in Section~\ref{main}; recall from Lemma~\ref{thm:estimatecat} that:
\begin{equation}\label{eq:10.1}
\cat_{\mathfrak C,{\mathfrak C_0}}({\mathfrak C})\ge 2.
\end{equation}

Denote by $\mathfrak D$ the class of closed $\Rcal$--invariant subset of $\mathfrak C$. Define, for
any $i=1,2$,
\begin{equation}\label{eq:10.2}
\Gamma_i=\big\{\Dcal\in\mathfrak D\,:\,\cat_{{\mathfrak C},{\mathfrak C}_0}(\Dcal)\ge i\big\}.
\end{equation}
Set
\begin{equation}\label{eq:10.3}
c_i=\inf_{\stackrel{\Dcal\in\Gamma_i,}{(\mathcal D,h)\in\Hcal}}\Fcal(\mathcal D,h).
\end{equation}

\begin{rem}\label{rem:10.2}
If $\mathrm I_{\mathfrak C}:[0,1]\times\mathfrak C$ denotes the map $\mathrm I_{\mathfrak C}(\tau,x)=x$ for
all $\tau$ and all $x$, the the pair $(\mathfrak C,\mathrm I_{\mathfrak C})\in \Hcal$.
Since $\widetilde{\mathfrak C}\in\Gamma_i$ for any $i$ (see \eqref{eq:10.1}),
we get:
\[
c_i\le\Fcal(\mathfrak C,\mathrm I_{\mathfrak C}) < M_0.
\]
Moreover $\Fcal\ge 0$, therefore $0\le c_i\le M_0$ for any $i$ (recall also the definition of $\mathcal F$ and $M_0$).
\end{rem}

We have the following lemmas involving the real numbers $c_i$.

\begin{lem}\label{thm:lem10.3} The following statements hold:
\begin{enumerate}
\item\label{itm:lem10.3-1} $c_1\ge \frac{1}{2}\left(\tfrac{3\delta_0}{4K_0}\right)^2$;
\item\label{itm:lem10.3-2} $c_1\le c_2$.
\end{enumerate}
\end{lem}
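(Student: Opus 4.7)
\textbf{Part (2)} is immediate from the definitions. Since $\cat_{\mathfrak C,\mathfrak C_0}(\mathcal D)\ge 2$ implies $\cat_{\mathfrak C,\mathfrak C_0}(\mathcal D)\ge 1$, we have $\Gamma_2\subset\Gamma_1$, and infimizing $\mathcal F$ over the smaller family of admissible pairs $(\mathcal D,h)\in\mathcal H$ with $\mathcal D\in\Gamma_2$ produces a larger or equal value; hence $c_1\le c_2$.

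\textbf{Part (1)} I would prove by contradiction. Suppose $c_1<\tfrac12\bigl(\tfrac{3\delta_0}{4K_0}\bigr)^2$; then by definition of infimum there exist $\mathcal D\in\Gamma_1$ and $(\mathcal D,h)\in\mathcal H$ with $\mathcal F(\mathcal D,h)<\tfrac12\bigl(\tfrac{3\delta_0}{4K_0}\bigr)^2$. By Lemma~\ref{lem:topological} there is a homotopy $\eta$ such that, writing $\widetilde h:=\eta\star h$ and $y_\gamma:=\widetilde h(1,\gamma)$, the curve $y_\gamma$ is entirely contained in $\partial\Omega$ for every $\gamma\in\mathcal D$. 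Using $\widetilde h$ I would build a continuous deformation $\Phi:[0,1]\times\mathcal D\to\mathfrak C$ with $\Phi(0,\cdot)=\mathrm{id}_{\mathcal D}$ and $\Phi(1,\mathcal D)\subset\mathfrak C_0$, contradicting $\cat_{\mathfrak C,\mathfrak C_0}(\mathcal D)\ge 1$.

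The map $\Phi$ is built in two stages, employing the map $G$ of Lemma~\ref{thm:corde}. Let $\pi_0:\phi^{-1}([0,\delta_0])\to\partial\Omega$ denote the retraction onto $\partial\Omega$ obtained by flowing backwards along $\eta^-$ of \eqref{eq:flusso-}; note that $\pi_0$ restricts to the identity on $\partial\Omega$. For $\tau\in[0,\tfrac12]$ put
\[
\Phi(\tau,\gamma)=G\bigl(\pi_0(\widetilde h(2\tau,\gamma)(0)),\,\pi_0(\widetilde h(2\tau,\gamma)(1))\bigr),
\]
which is well-defined because by \eqref{eq:defM} every curve in $\mathfrak M\subset\mathfrak M_0$ has endpoints in $\phi^{-1}([0,\delta_0[)$. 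Since $\gamma(0),\gamma(1)\in\partial\Omega$ we have $\Phi(0,\gamma)=G(\gamma(0),\gamma(1))=\gamma$, and at the matching instant $\Phi(\tfrac12,\gamma)=G(y_\gamma(0),y_\gamma(1))$. For $\tau\in[\tfrac12,1]$ put
\[
\Phi(\tau,\gamma)=G\bigl(y_\gamma(\tau-\tfrac12),\,y_\gamma(1-(\tau-\tfrac12))\bigr),
\]
which is continuous, agrees with the previous stage at $\tau=\tfrac12$, and yields $\Phi(1,\gamma)=G\bigl(y_\gamma(\tfrac12),y_\gamma(\tfrac12)\bigr)\in\mathfrak C_0$ by property \eqref{corde3} of Lemma~\ref{thm:corde}.

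\textbf{Main obstacle.} The delicate point is the additional axiom of the relative category requiring $\Phi(\tau,\mathcal D\cap\mathfrak C_0)\subset\mathfrak C_0$ for every $\tau\in[0,1]$. A constant curve $\gamma\equiv A\in\partial\Omega$ has $\mathcal F(\{\gamma\},\mathrm{id})=0$, but the intermediate curves $\widetilde h(\tau,\gamma)$ need not be constant, so the two endpoints used in the first stage may differ and $\Phi(\tau,\gamma)$ may fall outside $\mathfrak C_0$. I would handle this by modifying $\Phi$ in a closed neighborhood of $\mathcal D\cap\mathfrak C_0$ by a standard cut-off / Tietze-extension argument (exploiting that on a neighborhood of $\mathfrak C_0$ the energy $\mathcal F$ is arbitrarily small, so the admissible homotopies of Definitions~\ref{thm:tipoA}--\ref{thm:tipoC} can be smoothly damped to the identity on $\mathfrak C_0$). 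Once this compatibility is granted, $\Phi$ realizes $\cat_{\mathfrak C,\mathfrak C_0}(\mathcal D)=0$, contradicting $\mathcal D\in\Gamma_1$ and establishing (1).
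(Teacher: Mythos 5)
Your part (2) is exactly the paper's argument, and the overall strategy of part (1) also coincides with the paper's: argue by contradiction, use Lemma~\ref{lem:topological} to deform $h(1,\mathcal D)$ onto $\partial\Omega$, then convert this into a homotopy inside $\mathfrak C$ carrying $\mathcal D$ into $\mathfrak C_0$, contradicting $\cat_{\mathfrak C,\mathfrak C_0}(\mathcal D)\ge 1$. Your explicit two--stage construction of $\Phi$ via $G$, the endpoint projection $\pi_0$, and the contraction along the boundary curve $y_\gamma$ is a reasonable way to make precise the step the paper only asserts (the existence of the homotopy $K_\varepsilon$ with values in $\mathfrak C$); the identity $\Phi(0,\gamma)=G(\gamma(0),\gamma(1))=\gamma$ for $\gamma\in\mathfrak C$ and the matching at $\tau=\tfrac12$ are fine.

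The genuine gap is exactly at the point you flag and then defer: the condition $\Phi(\tau,\mathcal D\cap\mathfrak C_0)\subset\mathfrak C_0$ is the only nontrivial axiom of the relative category, and the proposed repair does not work as stated. You cannot ``damp the admissible homotopies to the identity on $\mathfrak C_0$'': $h$ is the given datum realizing $\mathcal F(\mathcal D,h)<\tfrac12\big(\tfrac{3\delta_0}{4K_0}\big)^2$ and is not yours to modify, and smallness of the energy near $\mathfrak C_0$ gives no control on $h(\tau,\gamma)$ for a constant $\gamma\in\partial\Omega$ (a homotopy of type $A$ as in Definition~\ref{thm:tipoA} must push such a curve strictly outside $\overline\Omega$, after which its two endpoints, and hence their $\pi_0$--projections, may separate). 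If instead you damp the new homotopy $\Phi$ by a cut-off in the time variable near $\mathcal D\cap\mathfrak C_0$, you destroy the property $\Phi(1,\cdot)\in\mathfrak C_0$ on the transition region; and if you replace $\Phi$ near $\mathfrak C_0$ by a tubular--neighborhood retraction onto $\mathfrak C_0$, you cannot glue it to $\Phi$ by a partition of unity, because $\Phi$ may carry curves arbitrarily close to $\mathfrak C_0$ far away from $\mathfrak C_0$ at intermediate times. So the contradiction with $\mathcal D\in\Gamma_1$ is not yet established. The paper disposes of this point differently: it asserts that the composed homotopy $h_0\star\eta_\varepsilon\star h_\varepsilon$ does not move the constant curves of $\mathcal D_\varepsilon$, so that the induced homotopy in $\mathfrak C$ is stationary on $\mathcal D_\varepsilon\cap\mathfrak C_0$ and the preservation of $\mathfrak C_0$ is automatic. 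To close your argument you need a statement of that kind (e.g.\ that the deformations used send constant curves, or more generally curves in $\mathcal D\cap\mathfrak C_0$, to curves whose two endpoint projections coincide for every $\tau$), rather than a cut-off on $\Phi$. A secondary point, common to the paper, is that the category is defined through open sets, so $\Phi$ must finally be extended from $\mathcal D$ to an open neighborhood in $\mathfrak C$; this is routine via the ANR property of $\mathfrak C$, as in the proof of Proposition~\ref{thm:prop9.4}, but it should be said.
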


\begin{lem}\label{thm:lem10.4} For all $i=1,2$,  $c_i$ is a geometrically critical value.
\end{lem}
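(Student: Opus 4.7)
The plan is to argue by contradiction along the lines of the classical minimax argument, using the First Deformation Lemma as the main input.

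Suppose that $c_i$ is not a geometrical critical value. First I would verify that $c_i$ lies in the range where Proposition~\ref{thm:firstdeflemma} applies: by Lemma~\ref{thm:lem10.3}\eqref{itm:lem10.3-1} we have $c_i\ge c_1\ge\tfrac12\left(\tfrac{3\delta_0}{4K_0}\right)^2>0$, while Remark~\ref{rem:10.2} combined with \eqref{eq:7.0bis} gives $c_i\le\mathcal F(\mathfrak C,\mathrm I_{\mathfrak C})<M_0/2<M_0$. Hence $c_i\in\left]0,M_0\right[$, and the First Deformation Lemma can be invoked.

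Since $c_i$ is assumed to be a geometrical regular value, Proposition~\ref{thm:firstdeflemma} yields $\varepsilon=\varepsilon(c_i)>0$ with the following property: for every $(\mathcal D,h)\in\mathcal H$ satisfying $\mathcal F(\mathcal D,h)\le c_i+\varepsilon$ there is a continuous map $\eta$ such that $(\mathcal D,\eta\star h)\in\mathcal H$ and
\[
\mathcal F(\mathcal D,\eta\star h)\le c_i-\varepsilon.
\]
By the definition of $c_i$ as an infimum, I can choose $(\mathcal D,h)\in\mathcal H$ with $\mathcal D\in\Gamma_i$ and $\mathcal F(\mathcal D,h)\le c_i+\varepsilon$. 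Applying the deformation to this pair produces $(\mathcal D,\eta\star h)\in\mathcal H$. The crucial point is that the deformed pair has the \emph{same} underlying set $\mathcal D$; since membership in $\Gamma_i$ depends only on the relative category $\cat_{\mathfrak C,\mathfrak C_0}(\mathcal D)$ (cf.\ \eqref{eq:10.2}), we still have $\mathcal D\in\Gamma_i$.

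Consequently $(\mathcal D,\eta\star h)$ is an admissible competitor for the infimum defining $c_i$ with $\mathcal F(\mathcal D,\eta\star h)\le c_i-\varepsilon<c_i$, contradicting \eqref{eq:10.3}. Hence $c_i$ must be a geometrical critical value. The argument is entirely formal once the First Deformation Lemma is granted; the only minor point to verify is that the deformation preserves the set $\mathcal D$ (and thus membership in $\Gamma_i$), which is built into the statement of Proposition~\ref{thm:firstdeflemma}. So I do not expect any substantial obstacle in this proof — the technical difficulty has been absorbed into the deformation lemma itself.
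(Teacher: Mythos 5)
Your proposal is correct and follows essentially the same argument as the paper: a contradiction via the First Deformation Lemma, applied to a near-optimal pair $(\mathcal D,h)$ with $\mathcal D\in\Gamma_i$, noting that the deformed pair $(\mathcal D,\eta\star h)$ still competes in the infimum \eqref{eq:10.3} since the set $\mathcal D$ is unchanged. Your preliminary check that $c_i\in\left]0,M_0\right[$ (via Lemma~\ref{thm:lem10.3}\eqref{itm:lem10.3-1} and Remark~\ref{rem:10.2}) is a small, welcome addition that the paper leaves implicit.
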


\begin{lem}\label{thm:lem10.5} Assume that there is only one OGC in $\overline\Omega$.
Then,
\begin{equation}\label{eq:10.4}
c_1<c_2.
\end{equation}
\end{lem}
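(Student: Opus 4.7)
The plan is to argue by contradiction along the standard Ljusternik--Schnirelman scheme. Suppose $c_1 = c_2 = c$. From Lemma~\ref{thm:lem10.3}(\ref{itm:lem10.3-1}) this common value satisfies $c \ge \tfrac12\bigl(\tfrac{3\delta_0}{4K_0}\bigr)^2 > 0$, and by Lemma~\ref{thm:lem10.4} it is a geometrical critical value. Hence the Second Deformation Lemma (Proposition~\ref{thm:prop9.3}) applies, providing some $\varepsilon_* > 0$. Since by hypothesis there is only one OGC in $\overline\Omega$, Proposition~\ref{thm:prop9.4} furnishes $\varepsilon \in {]0,\varepsilon_*]}$ together with an open set $\widehat{\mathcal A}_{c,\varepsilon} \subset \mathfrak C$ containing $\mathcal A_{c,\varepsilon}$ and contractible in $\mathfrak C$ via a homotopy $h_{c,\varepsilon}$.

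Next I would pick a nearly-minimising pair for $c_2$: by the definition \eqref{eq:10.3} there exist $\mathcal D \in \Gamma_2$ and $(\mathcal D,h) \in \mathcal H$ with $\mathcal F(\mathcal D,h) \le c + \varepsilon$. Proposition~\ref{thm:prop9.3} then yields a homotopy $\eta$ with $(\mathcal D, \eta \star h) \in \mathcal H$ and $\mathcal F(\mathcal D \setminus \mathcal A_{c,\varepsilon}, \eta \star h) \le c - \varepsilon$. Since $\mathcal A_{c,\varepsilon} \subset \widehat{\mathcal A}_{c,\varepsilon}$, the set $\mathcal D \setminus \widehat{\mathcal A}_{c,\varepsilon}$ is closed in $\mathfrak C$ and contained in $\mathcal D \setminus \mathcal A_{c,\varepsilon}$; because $\mathcal F$ is a supremum over its first argument (see \eqref{eq:funzionepreparatoria}), one obtains
$$\mathcal F\bigl(\mathcal D \setminus \widehat{\mathcal A}_{c,\varepsilon},\; \eta \star h\bigr) \;\le\; c - \varepsilon,$$
and moreover $(\mathcal D \setminus \widehat{\mathcal A}_{c,\varepsilon}, \eta \star h) \in \mathcal H$, since the properties \eqref{eq:numero1}, \eqref{eq:partizione} and \eqref{eq:numero8} defining $\mathcal H$ are inherited by restriction of the first argument to a closed subset.

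The crucial step is then to prove $\mathcal D \setminus \widehat{\mathcal A}_{c,\varepsilon} \in \Gamma_1$, i.e.\ $\cat_{\mathfrak C,\mathfrak C_0}(\mathcal D \setminus \widehat{\mathcal A}_{c,\varepsilon}) \ge 1$. This follows from the standard subadditivity of the relative Ljusternik--Schnirelman category (proved in the same cohomological style as in Appendix~\ref{sec:appendixrelLScat}): any relative-categorical open cover $\{A_0, A_1, \ldots, A_k\}$ realising $\cat_{\mathfrak C,\mathfrak C_0}(\mathcal D \setminus \widehat{\mathcal A}_{c,\varepsilon})$ can be enlarged by the single open set $\widehat{\mathcal A}_{c,\varepsilon}$, which is contractible in $\mathfrak C$, to yield a cover of $\mathcal D$ of the appropriate type, whence
$$2 \;\le\; \cat_{\mathfrak C,\mathfrak C_0}(\mathcal D) \;\le\; \cat_{\mathfrak C,\mathfrak C_0}\bigl(\mathcal D \setminus \widehat{\mathcal A}_{c,\varepsilon}\bigr) + 1.$$
Once this is in place, the chain of inequalities $c_1 \le \mathcal F(\mathcal D \setminus \widehat{\mathcal A}_{c,\varepsilon}, \eta \star h) \le c - \varepsilon < c = c_1$ gives the desired contradiction. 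I expect the main technical obstacle to lie not in the contradiction scheme itself but in the bookkeeping around the neighbourhood $\widehat{\mathcal A}_{c,\varepsilon}$: verifying that its topological properties, coming out of Proposition~\ref{thm:prop9.4}, are strong enough both for the subadditivity argument above and for the membership $(\mathcal D \setminus \widehat{\mathcal A}_{c,\varepsilon}, \eta \star h) \in \mathcal H$ after restriction, and matching these precisely with the statement of Proposition~\ref{thm:prop9.3}.
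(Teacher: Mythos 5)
Your proposal is correct and follows essentially the same route as the paper: assume $c_1=c_2=c$, invoke Lemma~\ref{thm:lem10.3} and Lemma~\ref{thm:lem10.4} to ensure $c$ is a positive geometrical critical value, apply Proposition~\ref{thm:prop9.3} to a near-minimizing pair in $\Gamma_2$, remove the contractible neighbourhood $\widehat{\mathcal A}_{c,\varepsilon}$ from Proposition~\ref{thm:prop9.4}, and use subadditivity of the relative category to place the remainder in $\Gamma_1$, contradicting the definition of $c_1$. Your version merely spells out two points the paper leaves implicit (monotonicity of $\mathcal F$ under shrinking the first argument and stability of membership in $\mathcal H$ under restriction to a closed subset), which is fine.
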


\begin{proof}[Proof of Lemma \ref{thm:lem10.3}]
Let us prove \eqref{itm:lem10.3-1}. Assume by contradiction $c_1<\frac12 \left(\tfrac{3\delta_0}{4K_0}\right)^2$,
and take $\varepsilon>0$ such that $c_1+\varepsilon<\frac12 \left(\tfrac{3\delta_0}{4K_0}\right)^2$. By
\eqref{eq:10.2}--\eqref{eq:10.3} there exists $\Dcal_\varepsilon\in\Gamma_1$, and $(\mathcal D_\varepsilon,h_\varepsilon)\in\Hcal$
such that
\[
\Fcal(\mathcal D_\varepsilon,h_\varepsilon)\le c_1+\varepsilon
<\frac12\left(\tfrac{3\delta_0}{4K_0}\right)^2.
\]
Let $h_0$ be the homotopy sending any curve $x$ on $x(\frac12)$, and take $\eta_\varepsilon$ given by Lemma \ref{lem:topological} with $h$ replaced by $h_\varepsilon$.
 Then:
\[
(h_0\star \eta_\varepsilon \star h_\varepsilon(1,\Dcal_\varepsilon)) \text{ consists of constant curves in } \partial \Omega,
\]
(and $h_0\star \eta_\varepsilon \star h_\varepsilon$ does not move the constant curves in $\mathcal D_{\varepsilon}$). Then there exist
a homotopy $K_\varepsilon:[0,1]\times\Dcal_\varepsilon\to{\mathfrak C}$ such that
$K_\varepsilon(0,\cdot)$ is the identity, $K_\varepsilon(1,\Dcal_\varepsilon)\subset{\mathfrak C}_0$
and
\[
K_\varepsilon(\tau,\Dcal_\varepsilon\cap\widetilde{\mathfrak C}_0)\subset{\mathfrak C}_0,\,\forall\tau\in[0,1].
\]
Then $\cat_{{\mathfrak C},{\mathfrak C}_0}(\Dcal_\varepsilon)=0$, in contradiction with the
definition of $\Gamma_1$.

To prove \eqref{itm:lem10.3-2}, observe that by \eqref{eq:10.3}
for any $\varepsilon>0$ there exists $\Dcal\in\Gamma_2$ and $(\mathcal D,h)\in\Hcal$ such that
\[
\Fcal(\mathcal D,h)\le c_2+\varepsilon.
\]
Since $\Gamma_2\subset\Gamma_1$ by definition of $c_1$ we deduce $c_1\le c_2+\varepsilon$, and
\eqref{itm:lem10.3-2} is proved, since $\varepsilon$ is arbitrary.
\end{proof}

\begin{proof}[Proof of Lemma \ref{thm:lem10.4}]
Assume by contradiction that $c_i$ is not a  geometrically critical value for some $i$.
Take $\varepsilon=\varepsilon(c_i)$ as in Proposition~\ref{thm:firstdeflemma}, and $(\Dcal_\varepsilon,h)\in\Hcal$ such that
\[
\Fcal(\Dcal_\varepsilon,h)\le c_i+\varepsilon.
\]
Now let $\eta$ as in Proposition~\ref{thm:firstdeflemma} and take $h_\varepsilon=\eta\star h$. By the same
Proposition,
\[
\Fcal(\Dcal_\varepsilon,h_\varepsilon)\le c_i-\varepsilon,
\]
in contradiction with \eqref{eq:10.3} because $(\Dcal_\varepsilon,h_\varepsilon)\in\Hcal$.
\end{proof}

\begin{proof}[Proof of Lemma \ref{thm:lem10.5}]
Assume by contradiction that \eqref{eq:10.4} does not hold. Then
\[
c\equiv c_1=c_2.
\]
Take $\varepsilon_*=\varepsilon_*(c)$ as in Proposition \ref{thm:prop9.3},
$\Dcal_2\in\Gamma_2$ and $(\Dcal_2,h)\in\Hcal$, such that
\[
\Fcal(\Dcal_2,h)\le c+\varepsilon_*.
\]

Let
$\mathcal A = \widehat{{\mathcal A}_{c,\varepsilon}}$ be the open set given by Proposition \ref{thm:prop9.4}.
The by definition of $\Gamma_1$,
and simple properties of relative category,
\[
\Dcal_1\equiv\Dcal_2\setminus {\mathcal A}\in\Gamma_1.
\]
Now let $\eta$ as in Proposition \ref{thm:prop9.3}. We have
\[
\mathcal F(\mathcal D_2\setminus {\mathcal A},\eta\star h)\leq c-\varepsilon_*,
\]
in contradiction with the definition of $\Gamma_1$.
\end{proof}

\begin{proof}[Proof of Theorem \ref{thm:main}]
It follows immediately from lemmas \ref{thm:lem10.3}--\ref{thm:lem10.5} and Proposition
\ref{thm:distinct}.
\end{proof}

\appendix
\section{An estimate on the relative category}
\label{sec:appendixrelLScat}
Let $n\geq 1$ be an integer; $\mathbb S^n$ is the $n$-dimensional sphere, and $\Delta^n\subset\mathbb S^n\times\mathbb S^n$ is the diagonal. We want to estimate the relative Ljusternik--Schnirelman category of the pair $(\mathbb S^n\times\mathbb S^n,\Delta^n)$, and to this aim we will prove an estimate on the relative cuplength of the pair.

For a topological space $X$ and an integer $k\ge0$, we will denote by $H^k(X)$ and $\widetilde H^k(X)$ respectively the $k$-th singular cohomology and the $k$-th reduced singular cohomology group of $X$. For a topological pair $(X,Y)$, $H^k(X,Y)$ is the $k$-th relative singular cohomology group of the pair; in particular, $H^k(X,\emptyset)=H^k(X)$. Given $\alpha\in H^p(X,Y)$ and $\beta\in H^q(X,Z)$, $\alpha\cup\beta\in H^{p+q}(X,Y\bigcup Z)$ will denote the cup product of $\alpha$ and $\beta$; recall that $\alpha\cup\beta=(-1)^{pq}\beta\cup\alpha$.

The notion of relative cuplength, here recalled, will be also used.

\begin{defin}\label{def:A.4}
The number $\mathrm{cuplength}(X,Y)$ is the largest positive integer $k$ for which there exists $\alpha_0\in H^{q_0}(X,Y)$
($q_0\ge 0$) and $\alpha_i\in H^{q_i}(X)$, $i=1,\ldots,k$ such that
\[
q_i\ge 1,\quad\forall\; i=1,\ldots,k,
\]
and
\[
\alpha_0\cup\alpha_1\cup\ldots\cup\alpha_k\ne 0 \text{\ in\ } H^{q_0+q_1+\ldots+q_k}(X,Y),
\]
where $\cup$ denotes the cup product.
\end{defin}

As for the absolute Lusternik--Schirelmann category, we have the following estimate of relative category by means of relative cuplenght, cf e.g. \cite{FH,FW}
\begin{prop}\label{thm:propA2}
$\cat_{\mathbb S^n \times \mathbb S^n,\Delta^n}(\mathbb S^n \times \mathbb S^n) \geq
\mathrm{cuplength}(\mathbb S^n\times\mathbb S^n,\Delta^n) + 1$.\qed
\end{prop}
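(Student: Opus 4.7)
The approach is to carry out the standard cohomological argument that relates cuplength to Lusternik--Schnirelmann category, suitably adapted to the relative setting at hand. Write $X = \mathbb{S}^n \times \mathbb{S}^n$ and $Y = \Delta^n$, set $k = \mathrm{cuplength}(X, Y)$, and fix witnesses $\alpha_0 \in H^{q_0}(X, Y)$ and $\alpha_i \in H^{q_i}(X)$ for $i = 1, \ldots, k$ with $q_i \geq 1$ and $\alpha_0 \cup \alpha_1 \cup \ldots \cup \alpha_k \neq 0$. The plan is to argue by contradiction: assume $\cat_{X,Y}(X) = m \leq k$, fix an open cover $X = A_0 \cup A_1 \cup \ldots \cup A_m$ as in the definition of relative category of Section~\ref{main}, and derive that the full cup product must vanish.

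First I would lift each witness to a relative cohomology class adapted to the cover. For $i = 1, \ldots, m$, the set $A_i$ is contractible in $X$, so the restriction $H^{q_i}(X) \to H^{q_i}(A_i)$ is zero in positive degree; the long exact sequence of the pair $(X, A_i)$ then provides a lift $\tilde\alpha_i \in H^{q_i}(X, A_i)$ mapping to $\alpha_i$ under $H^{q_i}(X, A_i) \to H^{q_i}(X)$. For the relative witness $\alpha_0$, the decisive observation is that the homotopy $h_0$ deforming $A_0$ into $Y$, while keeping $A_0 \cap Y$ inside $Y$, is in particular a homotopy of pairs $(A_0, A_0 \cap Y) \to (Y, Y)$. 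Since $H^*(Y, Y) = 0$, the restriction $H^{q_0}(X, Y) \to H^{q_0}(A_0, A_0 \cap Y)$ vanishes on $\alpha_0$. Combining this with excision $H^*(A_0 \cup Y, Y) \cong H^*(A_0, A_0 \cap Y)$ and the long exact sequence of the triple $(X, A_0 \cup Y, Y)$ yields a lift $\tilde\alpha_0 \in H^{q_0}(X, A_0 \cup Y)$ restricting to $\alpha_0$ under $H^{q_0}(X, A_0 \cup Y) \to H^{q_0}(X, Y)$.

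The second step is the cohomological computation exploiting the naturality of the relative cup product $H^p(X, A) \otimes H^q(X, B) \to H^{p+q}(X, A \cup B)$. The iterated product $\tilde\alpha_0 \cup \tilde\alpha_1 \cup \ldots \cup \tilde\alpha_m$ lies in $H^{q_0 + \ldots + q_m}(X, A_0 \cup Y \cup A_1 \cup \ldots \cup A_m) = H^{*}(X, X) = 0$, because the $A_i$ cover $X$. Under the restriction induced by the inclusion of pairs $(X, Y) \hookrightarrow (X, A_0 \cup Y \cup A_1 \cup \ldots \cup A_m)$, this zero class maps to $\alpha_0 \cup \alpha_1 \cup \ldots \cup \alpha_m \in H^*(X, Y)$, which therefore vanishes. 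Cupping with the remaining $\alpha_{m+1}, \ldots, \alpha_k$ (when $m < k$) gives $\alpha_0 \cup \alpha_1 \cup \ldots \cup \alpha_k = 0$, contradicting the choice of these classes as cuplength witnesses.

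The main technical point is justifying the excision step and the resulting lift of $\alpha_0$ to $H^{q_0}(X, A_0 \cup Y)$: one needs $\{A_0, Y\}$ to form an excisive pair inside $A_0 \cup Y$, i.e.\ the natural isomorphism $H^*(A_0 \cup Y, Y) \cong H^*(A_0, A_0 \cap Y)$. In the present setting, where $X = \mathbb{S}^n \times \mathbb{S}^n$ carries a smooth CW structure, $Y = \Delta^n$ is a closed subcomplex, and $A_0$ may be taken open, this is standard and requires no additional hypothesis. Apart from this verification, the rest is a formal manipulation of long exact sequences and naturality of the relative cup product, exactly parallel to the proof of the analogous absolute statement for Lusternik--Schnirelmann category.
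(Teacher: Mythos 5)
Your proposal supplies a proof where the paper gives none: Proposition~\ref{thm:propA2} is stated with a \qed and a pointer to \cite{FH,FW}, so there is no in-paper argument to compare with, and the route you take (lift $\alpha_i$ to $H^{q_i}(X,A_i)$ for $i\ge1$ using contractibility in $X$, lift $\alpha_0$ to $H^{q_0}(X,A_0\cup Y)$ using the pair homotopy $h_0$, multiply into $H^{*}(X,X)=0$ and restrict back to $H^{*}(X,Y)$) is indeed the standard one underlying those references. The lifts for $i\ge1$, the naturality of the relative cup product, and the final vanishing are fine (note that the excisiveness needed at the last step is harmless because the open sets $A_0,\dots,A_m$ already cover $X$).

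The genuine gap is precisely at the point you flag and then dismiss: the claim that $\{A_0,Y\}$ is excisive ``and requires no additional hypothesis'' is false for an arbitrary open $A_0$ of a categorical cover. The definition of relative category used in Section~\ref{main} imposes no relation between $A_0$ and $Y$ near $Y\setminus A_0$; in particular one may have $A_0\cap Y=\emptyset$ while $\overline{A_0}$ meets $Y$ (an admissible open set can accumulate on $\Delta^n$ without touching it and still be deformable into $\Delta^n$ inside $X$). For such a configuration the interior-cover criterion fails, and the map $H^{*}(A_0\cup Y,Y)\to H^{*}(A_0,A_0\cap Y)$ need not be an isomorphism (already $H^{0}(A_0\cup Y,Y)=0$ when $A_0\cup Y$ is connected, while $H^{0}(A_0,\emptyset)\ne0$). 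Hence the vanishing of $\alpha_0$ in $H^{q_0}(A_0,A_0\cap Y)$, which your homotopy argument correctly establishes, does not by itself give the vanishing in $H^{q_0}(A_0\cup Y,Y)$ that the exact sequence of the triple $(X,A_0\cup Y,Y)$ requires in order to produce $\tilde\alpha_0$. This is exactly the technical point the cited proofs are built to avoid: one either shrinks the open categorical cover to a closed cover (possible by normality; all vanishing statements restrict) and uses the strong excision and tautness of \v{C}ech--Alexander--Spanier cohomology, which agrees with singular cohomology on the compact polyhedral pair $(\mathbb S^n\times\mathbb S^n,\Delta^n)$ so the cuplength witnesses transfer, or one works with a formulation of relative category in which the deformation of $A_0$ is defined on $A_0\cup Y$ (for instance requiring $Y\subset A_0$). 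With such a repair your argument closes; as written, the lift of $\alpha_0$ is not established for all covers allowed by the paper's definition.
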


Therefore, to prove that $cat_{\mathbb S^n \times \mathbb S^n,\Delta^n}(\mathbb S^n \times \mathbb S^n) \geq 2$ it will be sufficient to prove the following

\begin{prop}\label{thm:propA3}
For all $n\ge1$, $\mathrm{cuplength}(\mathbb S^n\times\mathbb S^n,\Delta^n)\ge1$.
\end{prop}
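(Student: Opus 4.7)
The plan is to exhibit one relative class $\alpha_0$ and one absolute class $\alpha_1$ whose cup product is nonzero, which by Definition~\ref{def:A.4} already gives $\mathrm{cuplength}(\mathbb S^n\times\mathbb S^n,\Delta^n)\ge 1$. Let $p_1,p_2:\mathbb S^n\times\mathbb S^n\to\mathbb S^n$ denote the projections, fix a generator $\omega\in H^n(\mathbb S^n)$, and set $a:=p_1^*\omega$, $b:=p_2^*\omega$. By the K\"unneth formula, $H^n(\mathbb S^n\times\mathbb S^n)$ is the free abelian group on $\{a,b\}$, $H^{2n}(\mathbb S^n\times\mathbb S^n)\cong\mathbb Z$ is generated by $a\cup b$, and $a\cup a=b\cup b=0$ (since $H^{2n}(\mathbb S^n)=0$).

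The first step will be to produce $\alpha_0$ via the long exact sequence of the pair. Letting $\iota:\Delta^n\hookrightarrow\mathbb S^n\times\mathbb S^n$ be the inclusion, the compositions $p_1\circ\iota$ and $p_2\circ\iota$ both coincide with the canonical identification $\Delta^n\cong\mathbb S^n$, so $\iota^*a=\iota^*b=\omega$ and therefore $a-b\in\ker\iota^*$. Exactness of
\[
H^n(\mathbb S^n\times\mathbb S^n,\Delta^n)\xrightarrow{\,j^*\,} H^n(\mathbb S^n\times\mathbb S^n)\xrightarrow{\,\iota^*\,} H^n(\Delta^n)
\]
then yields a class $\alpha_0\in H^n(\mathbb S^n\times\mathbb S^n,\Delta^n)$ with $j^*\alpha_0=a-b$.

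The second step will be to show that $\alpha_0\cup a\ne 0$ in $H^{2n}(\mathbb S^n\times\mathbb S^n,\Delta^n)$. By naturality of the mixed cup product pairing $H^p(X,Y)\otimes H^q(X)\to H^{p+q}(X,Y)$, one obtains
\[
j^*(\alpha_0\cup a)=(j^*\alpha_0)\cup a=(a-b)\cup a=-b\cup a=\pm(a\cup b),
\]
using $a\cup a=0$ together with graded commutativity in the last two equalities. Since $a\cup b$ generates $H^{2n}(\mathbb S^n\times\mathbb S^n)$, the image $j^*(\alpha_0\cup a)$ is nonzero, hence $\alpha_0\cup a\ne0$. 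Taking $q_0=q_1=n\ge1$ in Definition~\ref{def:A.4} gives $\mathrm{cuplength}(\mathbb S^n\times\mathbb S^n,\Delta^n)\ge 1$. I do not foresee a substantial obstacle: the only points deserving attention are the identification $\iota^*a=\iota^*b$ (immediate from the fact that both projections restrict to the identity on the diagonal) and the naturality of the relative/absolute cup product pairing, which is a standard fact in singular cohomology.
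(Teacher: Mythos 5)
Your proof is correct, and it follows the same basic strategy as the paper's: both arguments work with the class $a-b=\pi_1^*\omega-\pi_2^*\omega$, which the long exact sequence of the pair lifts to a relative class $\alpha_0\in H^n(\mathbb S^n\times\mathbb S^n,\Delta^n)$ because $\iota^*a=\iota^*b=\omega$. The difference lies in how nonvanishing of the product is detected. The paper computes the target group itself: via a second application of the long exact sequence it shows $H^{2n}(\mathbb S^n\times\mathbb S^n,\Delta^n)\cong\mathbb Z$, generated by (the preimage of) $\pi_1^*(\omega)\cup\pi_2^*(\omega)$, which requires a separate argument for $n=1$ where $\widetilde H^{2n-1}(\Delta^n)\ne0$. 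You instead push the product back into absolute cohomology via $j^*$ and use naturality of the mixed pairing $H^p(X,Y)\otimes H^q(X)\to H^{p+q}(X,Y)$ together with the K\"unneth computation $j^*(\alpha_0\cup a)=(a-b)\cup a=\pm\,a\cup b\ne0$. This sidesteps the computation of $H^{2n}$ of the pair (and the $n=1$ subtlety) entirely, at the cost of only proving what is needed (a nonzero product) rather than the paper's stronger statement that the pairing is surjective onto $H^{2n}(\mathbb S^n\times\mathbb S^n,\Delta^n)\cong\mathbb Z$. Both ingredients you rely on (the identification $\iota^*a=\iota^*b$ and naturality of the relative cup product under the map of pairs $(X,\emptyset)\to(X,\Delta^n)$) are indeed standard, so there is no gap.
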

\begin{proof}
The statement is equivalent to proving the existence of $p\ge0$, $q\ge1$, $\alpha\in H^p(\mathbb S^n\times\mathbb S^n,\Delta^n)$ and $\beta\in H^q(\mathbb S^n\times\mathbb S^n)$ such that $\alpha\cup\beta\ne0$.
This will follow immediately from the Lemma below.
\end{proof}
\begin{lem}
For $n\ge1$, the group $H^{2n}(\mathbb S^n\times\mathbb S^n, \Delta^n)$ is isomorphic to $\mathds Z$, and the map $H^n(\mathbb S^n\times\mathbb S^n,\Delta^n) \times H^n(\mathbb S^n\times\mathbb S^n)\ni(\alpha,\beta)\mapsto\alpha\cup\beta\in H^{2n}(\mathbb S^n\times\mathbb S^n, \Delta^n)$ is surjective.
\end{lem}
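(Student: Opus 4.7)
The plan is to compute the relevant relative cohomology groups via the long exact sequence of the pair $(X,Y)$, where $X=\mathbb S^n\times\mathbb S^n$ and $Y=\Delta^n$, and then to exhibit an explicit element of $H^n(X,Y)$ whose cup product with a well chosen class in $H^n(X)$ yields a generator of $H^{2n}(X,Y)$.

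By the K\"unneth formula, if $s\in H^n(\mathbb S^n)$ denotes a generator and $\pi_1,\pi_2:\mathbb S^n\times\mathbb S^n\to\mathbb S^n$ are the projections, then $H^n(X)=\mathds Z\langle a,b\rangle$ with $a:=\pi_1^*s$, $b:=\pi_2^*s$, while $a\cup a=b\cup b=0$ and $H^{2n}(X)=\mathds Z\langle a\cup b\rangle$. Since $\Delta^n\cong\mathbb S^n$ with inclusion $i:\Delta^n\hookrightarrow X$ given by $x\mapsto(x,x)$, we have $\pi_k\circ i=\mathrm{id}_{\mathbb S^n}$, so $i^*a=i^*b=s$; hence $i^*:H^n(X)\to H^n(\Delta^n)$ is the surjection $(p,q)\mapsto p+q$ with kernel $\mathds Z\langle a-b\rangle$. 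In degree $2n$, the group $H^{2n}(\Delta^n)$ vanishes for every $n\ge1$.

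First I would run the long exact sequence of $(X,Y)$ in the relevant degrees. For $n\ge 2$ the groups $H^{n-1}(Y)$ and $H^{2n-1}(Y)$ both vanish, and for $n=1$ one checks directly that the connecting map $H^0(Y)\to H^1(X,Y)$ is zero because $H^0(X)\to H^0(Y)$ is an isomorphism, while $H^1(Y)\to H^2(X,Y)$ is zero because $H^1(X)\to H^1(Y)$ is the surjection above. In every case we conclude:
\[
H^n(X,Y)\cong\ker\bigl(i^*:H^n(X)\to H^n(Y)\bigr)=\mathds Z,
\]
and the restriction map $j^*:H^{2n}(X,Y)\to H^{2n}(X)=\mathds Z$ is an isomorphism, proving the first assertion of the lemma.

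For the second assertion, pick $\alpha\in H^n(X,Y)$ with $j^*\alpha=a-b$, which exists by the previous step, and take $\beta=a\in H^n(X)$. By naturality of the cup product,
\[
j^*(\alpha\cup a)=(a-b)\cup a=a\cup a-b\cup a=-b\cup a=(-1)^{n+1}\,a\cup b,
\]
since $a\cup a=0$ and $a\cup b=(-1)^{n^2}b\cup a=(-1)^nb\cup a$. Hence $j^*(\alpha\cup a)$ is $\pm$ the generator of $H^{2n}(X)$, and since $j^*$ is an isomorphism in degree $2n$, the class $\alpha\cup a$ generates $H^{2n}(X,Y)$. The cup product map is therefore surjective.

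The only mild subtlety I expect is the case $n=1$, where $H^{n-1}(Y)=H^0(\mathbb S^1)$ and $H^{2n-1}(Y)=H^1(\mathbb S^1)$ are both nontrivial; the point is that the neighbouring maps in the exact sequence force the connecting homomorphisms to vanish, so the computation goes through uniformly for all $n\ge1$. Everything else is a routine exercise in cup products in the cohomology of a product of spheres.
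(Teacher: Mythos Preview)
Your proof is correct and follows essentially the same route as the paper: both use K\"unneth to identify generators of $H^n(X)$ and $H^{2n}(X)$, the long exact sequence of the pair to compute $H^n(X,Y)\cong\ker i^*\cong\mathds Z$ and to show $j^*:H^{2n}(X,Y)\to H^{2n}(X)$ is an isomorphism (with the same special handling of the $n=1$ case via surjectivity of $H^1(X)\to H^1(\Delta^1)$), and then an explicit cup product computation with $j^*\alpha=a-b$ to exhibit a generator. The only cosmetic difference is that the paper works in reduced cohomology, which absorbs your $H^0$ check for $n=1$, but the argument is the same.
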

\begin{proof}
It is well known that $H^k(\mathbb S^n)\cong\mathds Z$ for $k=0,n$, and $H^k(\mathbb S^n)=0$ if $k\ne0,n$.
It follows $H^n(\mathds S^n\times\mathds S^n)\cong\bigoplus_{k=0}^nH^k(\mathbb S^n)\otimes H^{n-k}(\mathbb S^n)\cong\mathds Z\oplus\mathds Z$. If $\omega$ is a generator of $H^n(\mathbb S^n)$, then the two generators of
$H^n(\mathbb S^n\times\mathbb S^n)\cong\mathds Z\oplus\mathds Z$ are $\pi_1^*(\omega)$ and $\pi_2^*(\omega)$, where
$\pi_1,\pi_2:\mathbb S^n\times\mathbb S^n\to\mathbb S^n$ are the projections.

For the computation of $H^n(\mathbb S^n\times\mathbb S^n,\Delta^n)$, we use the long exact sequence of the pair $(\mathbb S^n\times\mathbb S^n,\Delta^n)$ in reduced cohomology:
\[\cdots\longrightarrow\widetilde H^{n-1}(\Delta^n)\longrightarrow H^n(\mathbb S^n\times\mathbb S^n,\Delta^n)\longrightarrow\widetilde H^n(\mathbb S^n\times\mathbb S^n)\stackrel{\mathfrak i^*}\longrightarrow\widetilde H^n(\Delta^n)\longrightarrow\cdots\]
Since $\Delta^n$ is homeomorphic to $\mathbb S^n$, then $\widetilde H^{n-1}(\Delta^n)=0$. Thus, the group $H^n(\mathbb S^n\times\mathbb S^n,\Delta^n)$ can be identified with the subgroup of $\widetilde H^n(\mathbb S^n\times\mathbb S^n)$ given by the kernel of the map
$\mathfrak i^*:\widetilde H^n(\mathbb S^n\times\mathbb S^n)\to\widetilde H^n(\Delta^n)$.
This map takes each of the two generators $\pi_i^*(\omega)$, $i=1,2$, to $\omega$ (here we identify $\Delta^n$ with $\mathbb S^n$), so that $H^n(\mathbb S^n\times\mathbb S^n,\Delta^n)$ is the subgroup of $\widetilde H^n(\mathbb S^n\times\mathbb S^n)$ generated by $\pi_1^*(\omega)-\pi_2^*(\omega)$, which is isomorphic to $\mathds Z$.

Finally, let us compute $H^{2n}(\mathbb S^n\times\mathbb S^n,\Delta)$ using again the long exact sequence of the pair $(\mathbb S^n\times\mathbb S^n,\Delta^n)$ in reduced cohomology:
\[\cdots\longrightarrow\widetilde H^{2n-1}(\Delta^n)\longrightarrow H^{2n}(\mathbb S^n\times\mathbb S^n,\Delta^n)\longrightarrow\widetilde H^{2n}(\mathbb S^n\times\mathbb S^n)\stackrel{\mathfrak i^*}\longrightarrow\widetilde H^{2n}(\Delta^n)\longrightarrow\cdots\]
Clearly, $\widetilde H^{2n}(\Delta^n)=0$, and if $n>1$, also $\widetilde H^{2n-1}(\Delta^n)=0$.
When $n=1$, then $\widetilde H^{2n-1}(\Delta^n)=\widetilde H^1(\Delta^1)\cong\mathds Z$, however the
map $\widetilde H^1(\Delta^1)\to\widetilde H^2(\mathbb S^1\times\mathbb S^1)$ is identically zero, because the previous map of the exact sequence $\widetilde H^1(\mathbb S^1\times\mathbb S^1)\to\widetilde H^1(\Delta^1)$ is clearly surjective.\footnote{The map $\widetilde H^1(\mathbb S^1\times\mathbb S^1)\to\widetilde H^1(\Delta^1)$
is induced by the diagonal inclusion of $\mathbb S^1$ into $\mathbb S^1\times\mathbb S^1$. It takes both generators $\pi_1^*(\omega)$ and $\pi_2^*(\omega)$ of $H^1(\mathbb S^1\times\mathbb S^1)$ to the generator  $\omega$ of $H^1(\mathbb S^1)\cong H^1(\Delta^1)$.} In both cases, $n=1$ or $n>1$, we obtain
$H^{2n}(\mathbb S^n\times\mathbb S^n,\Delta^n)\cong\widetilde H^{2n}(\mathbb S^n\times\mathbb S^n)\cong\mathds Z$.
A generator of $\widetilde H^{2n}(\mathbb S^n\times\mathbb S^n)$ is $\pi_1^*(\omega)\cup\pi_2^*(\omega)$.

In conclusion, using the above identifications, the map $H^n(\mathbb S^n\times\mathbb S^n,\Delta^n) \times H^n(\mathbb S^n\times\mathbb S^n)\ni(\alpha,\beta)\mapsto\alpha\cup\beta\in H^{2n}(\mathbb S^n\times\mathbb S^n, \Delta^n)$ reads as the bilinear map $\mathds Z\times(\mathds Z\oplus\mathds Z)\to\mathds Z$ that takes
$\big(1,(1,0)\big)$ to $(-1)^{n+1}$ and $\big(1,(0,1)\big)$ to $1$. This is clearly surjective.
\end{proof}
From Proposition~\ref{thm:propA2} and Proposition~\ref{thm:propA3} we get:
\begin{cor}\label{thm:estrelcategory}
For all $n\ge1$, $\cat_{\mathbb S^n \times \mathbb S^n}(\mathbb S^n \times \mathbb S^n,\Delta^n)\ge2$.\qed
\end{cor}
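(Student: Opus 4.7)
The plan is to combine directly the two propositions immediately preceding in the appendix. First, by Proposition~\ref{thm:propA2} (the relative cuplength lower bound for the relative Lusternik--Schnirelmann category), one has
\[
\cat_{\mathbb S^n \times \mathbb S^n, \Delta^n}(\mathbb S^n \times \mathbb S^n) \ge \mathrm{cuplength}(\mathbb S^n \times \mathbb S^n, \Delta^n) + 1.
\]
Then, by Proposition~\ref{thm:propA3}, the right-hand side is at least $1 + 1 = 2$. The minor discrepancy between the subscript notation in the statement of the corollary and the one adopted in Proposition~\ref{thm:propA2} appears to be purely typographical: both are standard conventions for the relative category of the ambient space in the pair.

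All the genuine content of the argument is packed into the two cited propositions. Proposition~\ref{thm:propA2} is the standard relative version of the classical cuplength lower bound for the LS-category, available in references such as \cite{FH,FW}, so its proof would be a direct adaptation of the absolute case. The real substantive step is Proposition~\ref{thm:propA3}, which demands an explicit nonvanishing cup product $\alpha \cup \beta$ with $\alpha \in H^{*}(\mathbb S^n\times\mathbb S^n,\Delta^n)$ relative and $\beta \in H^{*}(\mathbb S^n\times\mathbb S^n)$ absolute and of positive degree.

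For that computation I would run through the long exact sequence of the pair $(\mathbb S^n\times\mathbb S^n,\Delta^n)$ in reduced cohomology, combined with the K\"unneth identification $H^n(\mathbb S^n\times\mathbb S^n)\cong \mathds Z\,\pi_1^{*}\omega\oplus \mathds Z\,\pi_2^{*}\omega$, where $\omega$ is a generator of $H^n(\mathbb S^n)$. Since the diagonal inclusion pulls both generators back to $\omega$, the difference $\pi_1^{*}\omega - \pi_2^{*}\omega$ lies in the kernel of the restriction map and hence lifts to a generator of $H^n(\mathbb S^n\times\mathbb S^n,\Delta^n)\cong \mathds Z$. Cupping this relative class with $\pi_1^{*}\omega$ produces $\pm\,\pi_1^{*}\omega \cup \pi_2^{*}\omega$, which is a generator of $H^{2n}(\mathbb S^n\times\mathbb S^n,\Delta^n)\cong \mathds Z$, establishing the needed nontriviality. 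The only real obstacle in the chain is precisely this nonvanishing of the relative cup product, which is sidestepped here because the two classes are pulled back from different factors of the product; once it is granted, the corollary follows by the arithmetic $1+1 \ge 2$ with no further work.
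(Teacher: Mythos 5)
Your proposal is correct and follows exactly the paper's route: the corollary is obtained by combining Proposition~\ref{thm:propA2} with Proposition~\ref{thm:propA3}, and your sketch of the cup-product computation (lifting $\pi_1^*\omega-\pi_2^*\omega$ to the relative group and cupping with $\pi_1^*\omega$) is the same argument the paper uses to prove the underlying lemma. You are also right that the subscript placement in the corollary's statement is merely a typographical variant of the notation used in Proposition~\ref{thm:propA2}.
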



\end{document}